\documentclass[12pt]{amsart}
 \usepackage[colorlinks=true,urlcolor=blue, citecolor=red,linkcolor=blue,linktocpage,pdfpagelabels, bookmarksnumbered,bookmarksopen]{hyperref}
 \usepackage[hyperpageref]{backref}
\usepackage{cleveref}
\usepackage{xcolor}
\usepackage{amsthm} 
\usepackage{latexsym,amsmath,amssymb}
\usepackage[colorinlistoftodos,prependcaption,textsize=tiny]{todonotes}
\usepackage{a4wide}
\usepackage{soul}

\title{H\"older continuity of Minimizing $W^{s,p}$-Harmonic Maps}

\author{Akshara Vincent}
\address[Akshara Vincent]{Department of Mathematics,
University of Pittsburgh,
301 Thackeray Hall,
Pittsburgh, PA 15260, USA}
\email{akv20@pitt.edu}
%
%


\definecolor{indigo}{rgb}{0.29, 0.0, 0.51}

\definecolor{p1}{gray}{0.4}
\definecolor{p2}{gray}{0.6}
\definecolor{p3}{gray}{0.98}
\definecolor{p4}{gray}{0.8}
\definecolor{p5}{gray}{0.9}

\setlength\parindent{0pt}

\setcounter{tocdepth}{1}

\belowdisplayskip=18pt plus 6pt minus 12pt \abovedisplayskip=18pt
plus 6pt minus 12pt
\parskip 8pt plus 1pt

\def\eps{\varepsilon}


\def\B{{B}}

\newcommand{\dif}{\,\mathrm{d}}

\def\N{{\mathbb N}}

\def\S{{\mathbb S}}

\def\R{{\mathbb R}}

\def\S{{\mathbb S}}
\def\B{{\mathbb B}}

\newtheorem{theorem}{Theorem}
\newtheorem{lemma}[theorem]{Lemma}

\newtheorem{proposition}[theorem]{Proposition}

\newtheorem{definition}[theorem]{Definition}

\newtheorem{conjecture}[theorem]{Conjecture}


\def\esssup{\mathop{\rm ess\,sup\,}}

\def\dist{{\rm dist\,}}



\newcommand{\brac}[1]{\left (#1 \right )}

\newcommand{\barint}{
\rule[.036in]{.12in}{.009in}\kern-.16in \displaystyle\int }

\newcommand{\barcal}{\mbox{$ \rule[.036in]{.11in}{.007in}\kern-.128in\int $}}



\def\mvint_#1{\mathchoice
          {\mathop{\vrule width 6pt height 3 pt depth -2.5pt
                  \kern -8pt \intop}\nolimits_{\kern -3pt #1}}%
          {\mathop{\vrule width 5pt height 3 pt depth -2.6pt
                  \kern -6pt \intop}\nolimits_{#1}}%
          {\mathop{\vrule width 5pt height 3 pt depth -2.6pt
                  \kern -6pt \intop}\nolimits_{#1}}%
          {\mathop{\vrule width 5pt height 3 pt depth -2.6pt
                  \kern -6pt \intop}\nolimits_{#1}}}


\newcommand{\RN}[1]{
  \textup{\uppercase\expandafter{\romannumeral#1}}
}


\numberwithin{theorem}{section} \numberwithin{equation}{section}

\newcommand{\lap}{\Delta }
\newcommand{\aleq}{\precsim}


\let\latexchi\chi
\makeatletter
\renewcommand\chi{\@ifnextchar_\sub@chi\latexchi}
\newcommand{\sub@chi}[2]{
  \@ifnextchar^{\subsup@chi{#2}}{\latexchi^{}_{#2}}%
}
\newcommand{\subsup@chi}[3]{
  \latexchi_{#1}^{#3}%
}
\makeatother

\makeatletter
\def\tikz@arc@opt[#1]{
  {%
    \tikzset{every arc/.try,#1}%
    \pgfkeysgetvalue{/tikz/start angle}\tikz@s
    \pgfkeysgetvalue{/tikz/end angle}\tikz@e
    \pgfkeysgetvalue{/tikz/delta angle}\tikz@d
    \ifx\tikz@s\pgfutil@empty%
      \pgfmathsetmacro\tikz@s{\tikz@e-\tikz@d}
    \else
      \ifx\tikz@e\pgfutil@empty%
        \pgfmathsetmacro\tikz@e{\tikz@s+\tikz@d}
      \fi%
    \fi
    \tikz@arc@moveto
    \xdef\pgf@marshal{\noexpand%
    \tikz@do@arc{\tikz@s}{\tikz@e}
      {\pgfkeysvalueof{/tikz/x radius}}
      {\pgfkeysvalueof{/tikz/y radius}}}%
  }%
  \pgf@marshal%
  \tikz@arcfinal%
}
\let\tikz@arc@moveto\relax
\def\tikz@arc@movetolineto#1{%
  \def\tikz@arc@moveto{\tikz@@@parse@polar{\tikz@arc@@movetolineto#1}(\tikz@s:\pgfkeysvalueof{/tikz/x radius} and \pgfkeysvalueof{/tikz/y radius})}}
\def\tikz@arc@@movetolineto#1#2{#1{\pgfpointadd{#2}{\tikz@last@position@saved}}}
\tikzset{%
  move to start/.code=\tikz@arc@movetolineto\pgfpathmoveto,%
  line to start/.code=\tikz@arc@movetolineto\pgfpathlineto}
\makeatother

\usepackage{tikz}
\usetikzlibrary{arrows}

\makeatletter
\newcommand\@erelb@r[1]{%
  \mathrel{\tikz[baseline=-.5ex]\draw[#1] (0,0)--(0.3,0);}
}

\newcommand{\erelbar}[1]{\@erelbar#1}
\def\@erelbar#1#2{%
  \ifcase\numexpr#1*4+#2\relax
    \@erelb@r{-}\or     
    \@erelb@r{->}\or    
    \@erelb@r{-|}\or    
    \@erelb@r{->|}\or   
    \@erelb@r{<-}\or    
    \@erelb@r{<->}\or   
    \@erelb@r{<-|}\or   
    \@erelb@r{<->}\or   
    \@erelb@r{|-}\or    
    \@erelb@r{|->}\or   
    \@erelb@r{|-|}\or   
    \@erelb@r{|<->|}\or 
    \@erelb@r{|<-}\or   
    \@erelb@r{|<->}\or  
    \@erelb@r{|<-|}\or  
    \@erelb@r{|<->|}    
  \else
    \@wrong
  \fi
}
\makeatother

\begin{document}
\begin{abstract}
We show that the mappings $u\in \dot{W}^{s,p}(\mathbb{R}^n,\mathcal{N})$ into manifolds $\mathcal{N}$  of a sufficiently simple topology that minimize the energy
$$\int_{\mathbb{R}^n}\int_{\mathbb{R}^n}\frac{|u(x)-u(y)|^p}{|x-y|^{n+sp}} \;dx\;dy$$
are locally H\"older continuous in a bounded domain $\Omega$ outside a singular set $\Sigma $ with Hausdorff dimension strictly smaller than $n-sp$. We avoid the use of a monotonicity formula (which is unknown if $p \neq 2$) by using a blow-up argument instead.
\end{abstract}
\maketitle
\tableofcontents

\section{Introduction}
Let $\mathcal{N}$ be a compact smooth manifold without boundary isometrically embedded into $\R^N$. Harmonic maps from a $n$-dimensional domain $\Omega \subset \R^n$ into $\mathcal{N}$ are critical points of the Dirichlet energy
\[
\int_{\Omega} |\nabla u|^2 \quad \text{s.t. $u: \Omega \to \mathcal{N}$}.
\]
Their regularity theory is quite interesting, as the following illustrates.
\begin{itemize}
    \item Harmonic maps, i.e., finite energy solutions to the Euler-Lagrange equation
    \[
\lap u = A(u)(\nabla u,\nabla u) \quad \text{in $\Omega$,}
    \]
    where $A$ is the second fundamental form of the isometric embedding $\mathcal N\subset \R^N$,
    are smooth in the critical dimension $n=2$ \cite{HeleinBook,RiviereConsLaws}. But in dimension $n \geq 3$, they can be everywhere discontinuous \cite{RivEvDisc}.
    \item \emph{Stationary} harmonic maps, i.e., finite energy solutions to the Euler-Lagrange equation that additionally are critical with respect to interior variations,
    \[
\frac{d}{d\eps } \Big |_{\eps = 0} E(u\circ \tau_\eps) = 0
    \]
    for $\tau: [-1,1] \times \Omega \to \Omega$, a smooth one parameter family of diffeomorphisms with $\tau_0 = id$, can have discontinuities in dimension $n \geq 3$. However, their singular set $\Sigma$ consisting of all points where $u$ is discontinuous has $(n-2)$-Hausdorff measure zero \cite{EvansSphere, BethuelStationary}.
    \item For $n \geq 3$, \emph{minimizing} harmonic maps, i.e., maps that satisfy
    \[
E(u) \leq E(v)
    \]
for all $v: \Omega \to \mathcal{N}$ with $v = u$ on $\partial \Omega$, can also be discontinuous. Indeed, $x/|x|$ is an example of a minimizing harmonic map from the unit ball $\B^3$ into the unit sphere $\S^2$ \cite{BCL}. The singular set $\Sigma$ of minimizing harmonic maps has Hausdorff dimension strictly less than $n-2$, $\dim_\mathcal H \Sigma \le n-3$ \cite{SU82}.
\end{itemize}
The regularity theory of harmonic maps in dimension $2$ relies on compensation phenomena and harmonic analysis techniques for the critical Euler-Lagrange equation. The arguments for stationary and minimizing maps usually rely on a monotonicity formula of the localized energy. The monotonicity formula for stationary harmonic maps is slightly weaker than the one for minimizing harmonic maps, leading to a slightly weaker regularity statement.

In this work, we are interested in the regularity of $W^{s,p}$-harmonic maps, where the Dirichlet energy is replaced with the Gagliardo-seminorm for fractional Sobolev spaces.
We have a few preliminary definitions before stating the main theorem.
\begin{definition}
    Let $1\leq p<\infty$ and $0<s<1$. Let $\Omega$ be an open subset of $\R^n$. The fractional Sobolev space, denoted by $W^{s,p}(\Omega,\R^N)$, is defined as the set of functions such that
    $$\Vert f \Vert_{W^{s,p}(\Omega,\R^N)}:= [f]_{W^{s,p}(\Omega,\R^N)}+ \Vert f \Vert_{L^p(\Omega,\R^N)}<+\infty$$
    where $[\cdot]_{W^{s,p}(\Omega,\R^N)}$ is the Gagliardo seminorm given by
    $$[f]_{W^{s,p}(\Omega,\R^N)}:= \left(\int_{\Omega}\int_{\Omega}\frac{|f(x)-f(y)|^p}{|x-y|^{n+sp}}dx\;dy\right)^{\frac{1}{p}}.$$
    $W_0^{s,p}(\Omega,\R^N)$ denotes the closure of $C^\infty_0(\Omega,\R^N)$ in the norm $\Vert \cdot \Vert_{W^{s,p}(\Omega,\R^N)}$. The notation $u\in W^{s,p}_{loc}(\R^n,\R^N)$ means $u\in  W^{s,p}(\Omega,\R^N)$ for any bounded open set $\Omega \subset \R^n$.

    The space $\dot{W}^{s,p}(\Omega,\R^N)$ is the set of functions $f\in L^{p}_{loc}(\Omega,\R^N)$ such that $[f]_{W^{s,p}(\Omega,\R^N)}<\infty$.

    Moreover, for a set $\mathcal{N} \subset \R^N$, the notion $u \in W^{s,p}(\Omega,\mathcal{N})$ means $u \in W^{s,p}(\Omega,\R^N)$ and $u \in \mathcal{N}$ a.e. in $\Omega$.
\end{definition}
\begin{definition}
    Let $2\leq p<\infty$ and $0<s<1$. Let $\Omega$ be an open subset of $\R^n$. For $f\in W^{s,p}(\Omega,\R^N)$, the fractional $p$-Laplacian of order $s$, denoted by $(\Delta_p)^s$, is defined by
    $$(-\Delta_p)^s f(x):=2\;\lim_{\varepsilon\to 0}\int_{\R^n\backslash B_\varepsilon (x)}\frac{|f(x)-f(y)|^{p-2}(f(x)-f(y))}{|x-y|^{n+sp}}dy. $$
\end{definition}
Note that the above equation is vector valued with components as
$$(-\Delta_p)^s f^i(x):=2\;\lim_{\varepsilon\to 0}\int_{\R^n\backslash B_\varepsilon (x)}\frac{|f(x)-f(y)|^{p-2}(f^i(x)-f^i(y))}{|x-y|^{n+sp}}dy $$
for $1\leq i\leq N$. The operator $(-\Delta_p)^s$ is a fractional (or nonlocal) version of the $p$-Laplacian operator, given by
$$\Delta_p f =\text{div}(|\nabla f|^{p-2}\nabla f ).$$ In weak sense, the fractional $p$-Laplacian naturally arises as a first order variation of the fractional $p$-energy functional
$$f \erelbar{21} \int_{\R^n}\int_{\R^n}\frac{|f(x)-f(y)|^p}{|x-y|^{n+sp}}dx\;dy.$$

In the distributional sense, we have $\varphi \in C_c^\infty(\R^n,\R^N)$
\[
 (-\Delta_p)^s f[\varphi]:=\int_{\R^n}\int_{\R^n}\frac{|f(x)-f(y)|^{p-2}(f(x)-f(y)) (\varphi(x)-\varphi(y))}{|x-y|^{n+sp}} dx\, dy.
\]

\begin{definition}
    Let $0<s<1$ and $1\leq p<\infty$ and $\mathcal{N}$ be a compact submanifold of $\R^N$. We say $u\in \dot{W}^{s,p}(\R^n,\mathcal{N})$ is a minimizing $W^{s,p}$-harmonic map in a bounded open set $\Omega \subset \R^n $, if for any $v\in \dot{W}^{s,p}(\R^n,\mathcal{N})$ with $v= u$ outside $\Omega$ we have
    $$\int_{\R^n}\int_{\R^n}\frac{|u(x)-u(y)|^p}{|x-y|^{n+sp}} \;dx\;dy \leq \int_{\R^n}\int_{\R^n}\frac{|v(x)-v(y)|^p}{|x-y|^{n+sp}} \;dx\;dy.$$
\end{definition}
The crucial difficulty in switching from the classical Dirichlet energy to the fractional seminorm is the lack of a monotonicity formula when $p \neq 2$. When $p=2$, it was shown in \cite{MS15} that a monotonicity formula can be obtained from the representation of the $W^{1/2,2}$-energy as the trace of the $W^{1,2}$-energy, and indeed the theory of singularities is understood to a similar extent as the classical harmonic map case \cite{MPS21}. When $p \neq 2$, it is an interesting and apparently very difficult open question how to obtain a (useful) monotonicity formula.

However, even in the absence of a workable monotonicity formula, we can estimate the size of the singular set of $W^{s,p}$-harmonic maps into manifolds in certain regimes: more precisely, the following is the main result of our paper.
\begin{theorem}\label{th:mainthm}
    Assume $1<p_1 < p_2 <\infty$ and  $0<s_1 < s_2<1$. There exists $\delta = \delta(s_1,s_2,p_1,p_2,n,N)> 0$ such that the following holds:
    Let $s \in (s_1,s_2)$ and $p \in (p_1,p_2)$ such that $n-\delta<sp<n$. Then for some $\zeta=\zeta(n,p,s)>0$ we have the following.

    Let $\Omega$ be any bounded domain in $\R^n$ with smooth boundary, and $\mathcal{N}$ be a compact smooth manifold of $\R^N$ with homotopy groups $\pi_0(\mathcal{N})=\pi_1(\mathcal{N})=...=\pi_{\lambda-2}(\mathcal{N})=0$ where $\lambda\in \mathbb{N}$ be such that $\max\{p,2\}<\lambda \leq N$. If $u\in \dot{W}^{s,p}(\R^n,\mathcal{N})$ is a minimizing $W^{s,p}$-harmonic map in $\Omega $, then $ u$ is locally H\"older continuous in $\Omega$ outside a singular set $\Sigma $ with $\mathcal{H}^{n-sp-\zeta}(\Sigma)=0$.
\end{theorem}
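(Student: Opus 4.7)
My plan is to establish an $\varepsilon$-regularity theorem for minimizing $W^{s,p}$-harmonic maps and combine it with a covering argument to control the size of the singular set. Since a monotonicity formula is unavailable when $p \neq 2$, I would obtain $\varepsilon$-regularity by a blow-up and compactness argument rather than through the classical Schoen--Uhlenbeck framework. The topology hypothesis $\pi_0(\mathcal{N}) = \dots = \pi_{\lambda-2}(\mathcal{N}) = 0$ with $\lambda > \max\{p,2\}$ will play two roles: it ensures the strong density of smooth maps in $\dot{W}^{s,p}(\mathbb{R}^n, \mathcal{N})$, which is used to build $\mathcal{N}$-valued competitors, and it forces blow-up limits with vanishing Gagliardo energy to be constant. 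Together these play the role that monotonicity usually does. The near-critical regime $n - \delta < sp < n$ is where such blow-ups can still be controlled quantitatively by Poincar\'e-type fractional embeddings.

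\textbf{$\varepsilon$-regularity via contradiction.}
The key estimate is that, with the scale-invariant excess
$$\Phi(x_0,r) = r^{sp-n} \int_{B_r(x_0)}\int_{B_r(x_0)} \frac{|u(x)-u(y)|^p}{|x-y|^{n+sp}} \, dx\, dy,$$
there exist $\varepsilon_0 > 0$ and $\tau, \theta \in (0,1)$ so that $\Phi(x_0,r) < \varepsilon_0$ implies $\Phi(x_0, \tau r) \leq \theta \, \Phi(x_0, r)$. I would prove this by contradiction: suppose minimizers $u_k$, points $x_k$ and radii $r_k$ satisfy $\Phi(x_k, r_k) \to 0$ but the decay fails. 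The rescalings $v_k(y) = u_k(x_k + r_k y)$ are minimizers on $B_1$ whose Gagliardo seminorms tend to zero, so by the fractional Poincar\'e inequality they converge in $L^p$ to a constant $c_\infty \in \mathcal{N}$. Using the topology of $\mathcal{N}$, one projects $v_k$ into a tubular neighborhood of $c_\infty$ via the nearest-point retraction, and then uses the vanishing of $\pi_0, \dots, \pi_{\lambda-2}$ to deform $v_k$ homotopically to a constant competitor that matches $v_k$ near $\partial B_1$, with energies controlled solely by a small annulus whose contribution vanishes. Minimality of $v_k$ then forces the rescaled energies to decay at rate $\theta$, contradicting the assumption.

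\textbf{H\"older regularity and Hausdorff dimension of $\Sigma$.}
Iterating the decay gives Morrey-type bounds $\Phi(x_0, r) \leq C r^{\alpha p}$ for some $\alpha > 0$, and a fractional Campanato-type embedding then yields $u \in C^{0,\alpha}_{\mathrm{loc}}$ around every point where $\Phi$ is eventually small. Hence
$$\Sigma \subset \left\{ x_0 \in \Omega : \liminf_{r \to 0} \Phi(x_0, r) \geq \varepsilon_0 \right\},$$
and a standard Vitali covering combined with the finiteness of the total fractional energy yields $\mathcal{H}^{n-sp}(\Sigma) < \infty$, hence $\dim_{\mathcal{H}} \Sigma \leq n - sp$. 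To upgrade this to the strict improvement $\mathcal{H}^{n-sp-\zeta}(\Sigma) = 0$, I would run a second blow-up at points of $\Sigma$: by compactness of minimizers one obtains tangent objects that are minimizing $W^{s,p}$-maps on $\mathbb{R}^n$ with a definite amount of excess, and feeding them back into the $\varepsilon$-regularity theorem, together with the topology obstruction against nontrivial concentration, one runs a Federer-type dimension reduction. Although without monotonicity these tangent maps are not homogeneous, the topology forces enough structural rigidity to reduce the dimension of their singular sets by a quantitative amount depending only on $n,s,p$.

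\textbf{Main obstacle.}
The principal difficulty is controlling the blow-up limit without a monotone quantity: one must show that the rescaled minimizers converge strongly in $\dot{W}^{s,p}$ and that $\mathcal{N}$-valued competitors can be produced at their scale whose energies match up to errors vanishing with $k$. This is exactly where the strong density of smooth maps and the vanishing homotopy groups of $\mathcal{N}$ are essential. Equally delicate is obtaining the definite improvement $\zeta > 0$ rather than merely the soft bound $n - sp$: without scale-invariant monotonicity, the dimension reduction has to be driven by a purely topological mechanism and by the quantitative character of the $\varepsilon$-regularity, and keeping track of how simple topology of the target rules out nontrivial tangent behavior on a sufficiently high-dimensional stratum will be the technical heart of the argument.
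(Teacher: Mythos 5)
Your high-level strategy (blow-up and compactness in place of monotonicity, topology of $\mathcal{N}$ to build competitors) matches the paper's, but the mechanism you propose for obtaining the decay estimate is different from the paper's and, I believe, does not actually close. Simply rescaling $v_k(y) = u_k(x_k + r_k y)$ and observing that the rescalings converge in $L^p$ to a constant does not yield a quantitative rate. The paper instead considers the \emph{normalized} sequence $v_i := \varepsilon_i^{-1}\bigl(u_i - (u_i)_{B_1}\bigr)$, whose Gagliardo energy on $B_1$ is exactly $1$ for each $i$, so the weak $W^{s,p}$-limit $v$ is \emph{nonconstant}. The heart of the argument is that, after passing to the limit in the Euler--Lagrange equation (and observing that the nonlinearity in the tangent direction contributes an $O(\varepsilon_i)$ error), this $v$ satisfies a fractional $p$-Laplace \emph{system} $(-\Delta_p)^s v = g$ in $B_1$ with $g \in L^{p'}$. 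The decay then follows from the H\"older regularity theory for this comparison system --- this is precisely where the restriction $sp > n - \delta$ enters, because the regularity for such systems (Conjecture 1.2 / Theorem 1.4) is only available in that regime. Your proposal never extracts a linearized equation for the blow-up limit, so it has no way to obtain a modulus of decay; "deforming $v_k$ homotopically to a constant competitor" gives at best that the energy goes to zero, not a geometric rate. You also never invoke a Caccioppoli-type estimate, which is how the paper transfers the regularity of the limit $v$ back to the minimizers $u_i$ at scale $\theta$ to close the contradiction.

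The dimension-reduction step you propose is also not how the paper proceeds, and I do not see how it could work as stated. A Federer-type argument requires homogeneity of tangent maps, which is exactly what monotonicity would give you and which is absent here; you acknowledge this but then appeal to "topological rigidity" without a concrete mechanism. The paper instead improves $n-sp$ to $n - sp - \zeta$ analytically: it combines the Caccioppoli estimate with a Poincar\'e--Sobolev inequality (Lemma~\ref{Lemma C.4}) to produce a reverse H\"older inequality for $\Gamma(y) = \bigl(\int_{\mathbb{R}^n}|u(x)-u(y)|^p/|x-y|^{n+sp}\,dx\bigr)^{1/p}$, applies Gehring's lemma to obtain higher integrability $\bar p > p$, and then applies the Frostman-type Lemma~\ref{Frostmann} with the improved exponent to get $\mathcal{H}^{n-s\bar p}(\Sigma)=0$, yielding $\zeta = s(\bar p - p)$. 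This is an elliptic self-improvement argument, not a stratification argument, and it uses the minimizing property once more through Caccioppoli rather than through tangent maps. Concretely: before your outline could be turned into a proof, you would need (a) the renormalized blow-up converging to a solution of a comparison system whose regularity you can quote, (b) a Caccioppoli estimate to close the contradiction, and (c) a replacement for Federer reduction that does not presuppose homogeneity.
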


A few remarks are in order. Our proof relies on two steps, first an initial estimate on the singular set, which adapts ideas from \cite{HKL} in the setting of static liquid crystals. Crucially, they (and we) are able to avoid using a monotonicity formula and instead employ a hybrid inequality and a blowup argument. This leads to a singular set of dimension $n-sp$. Secondly, we use Caccioppoli's inequality to obtain a slight dimension reduction leading to the result at hand. Let us mention that in \cite{Gastel}, by using a similar technique partial regularity was obtained for manifolds with sufficiently simple topology in the setting of biharmonic maps.

The small $\delta$ in the condition $n-\delta < sp < n$ is unfortunate, and is not actually related to the difficulties of harmonic maps, but to the unknown regularity theory for homogeneous fractional $p$-Laplace \emph{systems}. Indeed, since we employ a blow-up argument, we are able to compare minimizing harmonic maps into manifolds to harmonic maps into Euclidean space. But, sadly, the following seemingly elementary result is not known:

\begin{conjecture}\label{th:BLS paper thm 1.4}
Let $n,N \in \N$ and $\Omega$ be a bounded open set in $\R^n$. Let $u \in W^{s,p}_{loc}(\Omega,\R^N)$ for $s \in (0,1)$ and $p \in (1,\infty)$ be a solution to
\[
\int_{\R^n} \int_{\R^n} \frac{|u(x)-u(y)|^{p-2}(u(x)-u(y)) (\varphi(x)-\varphi(y))}{|x-y|^{n+sp}}\, dx\, dy = 0 \quad \forall \varphi \in C_c^\infty(\Omega,\R^N),
\]
Then $u$ is locally H\"older continuous.
\end{conjecture}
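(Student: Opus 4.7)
The plan is to follow the three-stage program that has been successfully carried out in the scalar fractional $p$-Laplacian setting ($N=1$) by Cozzi, Di Castro--Kuusi--Palatucci, Brasco--Lindgren--Schikorra, and others, and to try to extend each step to vector-valued solutions. The three stages are: (i) a nonlocal Caccioppoli inequality, (ii) a Gehring-type higher-integrability estimate, and (iii) an excess-decay estimate obtained by comparison with a nonlocal harmonic replacement.

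For step (i), I would test the weak formulation against $\varphi = \eta^p (u - \bar u)$, where $\eta \in C_c^\infty(B_{2r}(x_0))$ is a cutoff with $\eta \equiv 1$ on $B_r(x_0)$ and $\bar u$ is the average of $u$ over $B_{2r}(x_0)$. Using the pointwise identity $|a-b|^{p-2}(a-b)\cdot(a-b) = |a-b|^p$ together with Young's inequality to absorb the cross terms, one should arrive at
\[
[u]_{W^{s,p}(B_r(x_0),\R^N)}^p \le C\,r^{-sp}\int_{B_{2r}(x_0)} |u - \bar u|^p\,dx + (\text{nonlocal tail}),
\]
where the tail encodes the interaction with $\R^n \setminus B_{2r}(x_0)$. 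Combined with the fractional Sobolev--Poincaré inequality, this furnishes a reverse Hölder inequality for the fractional energy density, and the nonlocal Gehring lemma then yields some $\varepsilon > 0$ with $L^{p+\varepsilon}_\loc$ integrability of $|u(x)-u(y)|/|x-y|^{s+n/p}$.

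The decisive step (iii) is an excess-decay estimate. On each ball $B_R(x_0) \subset \Omega$, replace $u$ by the nonlocal harmonic replacement $v$, i.e., the solution of the same fractional $p$-Laplace system with $v = u$ outside $B_R(x_0)$, and establish a quantitative stability estimate $[u - v]_{W^{s,p}(\R^n)}^p \le \omega\bigl([u]_{W^{s,p}(B_R(x_0))}^p\bigr)$ for some modulus $\omega$. Combined with an \emph{a priori} Hölder estimate for $v$ -- obtained, say, by freezing the coupling $|v(x)-v(y)|^{p-2}$ to a slowly varying weight and treating the residual as a perturbation of a weighted linear fractional problem -- one expects a Morrey-type decay
\[
[u]_{W^{s,p}(B_r(x_0),\R^N)}^p \le C\left(\frac{r}{R}\right)^{n - sp + p\alpha}[u]_{W^{s,p}(B_R(x_0),\R^N)}^p + (\text{tail}),
\]
for some $\alpha > 0$. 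Iterating this decay on geometric sequences of balls and invoking the Morrey--Campanato embedding for $W^{s,p}$ then yields $u \in C^{0,\beta}_\loc$ for some $\beta > 0$.

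The main obstacle -- and the reason this is stated as a conjecture -- is step (iii) in the vectorial, genuinely nonlinear regime $N > 1$, $p \neq 2$. In the scalar case one sidesteps the harmonic replacement entirely by testing against truncations $(u - k)_\pm$ and running a De Giorgi iteration on level sets, but this route is incompatible with the vectorial coupling $|u(x)-u(y)|^{p-2}$ since truncating a vector field destroys the algebraic identity that makes component-wise Caccioppoli estimates close up. A linearization/freezing strategy instead requires sharp Hölder regularity for weighted nonlocal $p$-Laplace \emph{systems} with measurable coefficients, itself an open problem. Absent a genuinely new idea to handle the vectorial coupling, a realistic partial result -- and in fact the strategy adopted in the present paper for the minimizing manifold-valued problem -- is partial regularity: Hölder continuity outside a small singular set, obtained via blow-up arguments that avoid a monotonicity formula.
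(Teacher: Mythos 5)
This statement is explicitly labeled a \emph{conjecture} in the paper, and the authors state plainly that it is an open problem for $N\geq 2$ (only boundedness of vectorial solutions is known, from \cite{BDNS24}). There is therefore no proof in the paper for you to be compared against, and your proposal is, by your own admission in the final paragraph, not a proof either: you outline the standard three-stage program (Caccioppoli, Gehring, excess decay via harmonic replacement) and then correctly note that step (iii) breaks down precisely in the vectorial, $p\neq 2$ case. Since no argument closing that gap is offered, what you have written is an informed discussion of why the conjecture is hard, not a proof of it.

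Your diagnosis of the obstruction is accurate and matches the paper's own framing. The scalar proofs of local H\"older continuity for the fractional $p$-Laplacian rest on De Giorgi-type truncation $\varphi = \eta^p(u-k)_\pm$ and level-set iteration, and that machinery genuinely does not transfer to systems because truncating a vector-valued $u$ componentwise destroys the coupling through $|u(x)-u(y)|^{p-2}$, and one loses the sign structure that makes the scalar Caccioppoli estimates close. The alternative you sketch, freezing the modulus to reduce to a weighted linear fractional system, runs into the fact that H\"older regularity for such weighted nonlocal systems with merely measurable weight is itself unknown; this is a circular dependence, not a reduction. The paper sidesteps the conjecture entirely by proving \Cref{th:alternate for BLS}: a Caccioppoli/hole-filling iteration that yields a Morrey-type decay with exponent $\delta$ \emph{independent} of $s,p$ in a compact parameter range, which gives H\"older continuity only in the regime $n-\delta < sp < n$, i.e., near the borderline where $W^{s,p}$ is almost Morrey-embedded. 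That workaround is the source of the restriction $n-\delta<sp<n$ in \Cref{th:mainthm}; if \Cref{th:BLS paper thm 1.4} were true as stated, that restriction could be removed. Your step (i) and step (ii) do go through for systems (and are essentially what the paper uses in \Cref{th:caccioppoli} and in the Gehring argument of \Cref{corollaryregularity}); the missing ingredient is entirely in step (iii), as you say.

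One small technical caution about your step (i) as written: testing with $\varphi=\eta^p(u-\bar u)$ is the scalar-adapted choice. For systems the paper uses $\varphi = \eta\,(u-\bar u)$ (power one on the cutoff), which is what makes the cross-term algebra tractable without invoking the scalar identity $|a-b|^{p-2}(a-b)(a-b)=|a-b|^p$ componentwise. This is a minor point since a Caccioppoli inequality does hold for systems, but the exponent $\eta^p$ is not the clean choice here.
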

The above conjecture is true for $N=1$, see, e.g. \cite{DCKP16,BrascoLingrenSchikorra} and references therein. For $N \geq 2$ this question is wide open; indeed, only the boundedness of solutions is known, and this was obtained very recently \cite{BDNS24}.

If \Cref{th:BLS paper thm 1.4} holds true, our $\delta$ in \Cref{th:mainthm} immediately improves without many changes. But,  since we cannot prove the above conjecture, in our case, we have to rely on the fact that there exists a $\delta > 0$ such that \Cref{th:BLS paper thm 1.4} holds whenever $n-\delta<sp \leq n$. More precisely,

\begin{theorem}\label{th:alternate for BLS}
Let $\Omega\subset \R^n$ be a bounded and open set. Assume $1<p_1 < p_2 <\infty$ and  $0<s_1 < s_2<1$. There exists $\delta = \delta(s_1,s_2,p_1,p_2,n,N)> 0$ such that the following holds for any $s \in (s_1,s_2)$ and $p \in (p_1,p_2)$ such that $sp+\delta>n$:

If $u\in W^{s,p}_{\text{loc}}(\Omega,\R^N)\cap L^{p-1}_{sp}(\R^n,\R^N)$ is a local weak solution of
$$(-\Delta_p)^su=f\;\;\;\;\;\text{in}\;\Omega,$$
where $f\in L^{\frac{p}{p-1}}_{\text{loc}}(\Omega,\R^N)$, then $u\in C^{0,\alpha}_{\text{loc}}(\Omega,\R^N)$ where $\alpha:=(sp+\delta-n)/p$.

 Moreover, we have for any ball $B_{4R}(x_0)\subset \subset \Omega$, there exists a uniform constant $C=C(n,s,p,\delta)>0$ such that
\begin{equation}\label{th:1.4}
\begin{split}
    [u]&_{C^{0,\alpha}(B_{R}(x_0),\R^N)} \\&\leq CR^{-\frac{\delta}{p}} \brac{\int_{B_{4R}(x_0)} \int_{\R^n} \frac{|u(x)-u(y)|^{p}}{|x-y|^{n+sp}}\, dx dy  + \|f\|_{L^{\frac{p}{p-1}}(B_{2R}(x_0),\R^N)} [u]_{W^{s,p}(B_{2R}(x_0),\R^N)}}^{\frac{1}{p}}.
\end{split}
\end{equation}

\end{theorem}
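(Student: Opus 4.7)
The plan is to upgrade the assumed $W^{s,p}_{\loc}$ regularity of $u$ to $W^{s+\eta,p}_{\loc}$ via a Gehring-type self-improvement, where $\eta=\eta(n,N,s_1,s_2,p_1,p_2)>0$, and then to conclude via fractional Morrey embedding. Once one has $u\in W^{s+\eta,p}_{\loc}$, setting $\delta:=\eta p$ gives $(s+\eta)p=sp+\delta>n$ under our hypothesis, so the embedding $W^{s+\eta,p}(B_R)\hookrightarrow C^{0,s+\eta-n/p}(B_R)$ produces H\"older regularity with the desired exponent $\alpha=s+\eta-n/p=(sp+\delta-n)/p$.

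The first step is a fractional Caccioppoli inequality: testing the distributional equation with $\varphi=\psi^{p}u$ for a suitable cutoff $\psi$ and using the monotonicity inequality $|a-b|^{p-2}(a-b)\cdot(a-b)\ge c|a-b|^{p}$ (which is valid for vector-valued $a,b\in\R^N$), one obtains an estimate of the form
\[
[u]_{W^{s,p}(B_R)}^{p} \lesssim R^{-sp}\|u\|_{L^p(B_{2R})}^{p} + \mathrm{Tail}(u;x_0,2R) + \|f\|_{L^{p/(p-1)}(B_{2R})}[u]_{W^{s,p}(B_{2R})},
\]
where the tail term is finite thanks to the hypothesis $u\in L^{p-1}_{sp}(\R^n)$. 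This step uses only convexity of the nonlinearity and does \emph{not} depend on scalar structure, so it applies to systems.

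The second step is to combine this Caccioppoli bound with a fractional Sobolev--Poincar\'e inequality to extract a reverse H\"older inequality at every scale, and then to apply Gehring's lemma (in the version that accommodates tail and forcing terms) to gain a small integrability exponent. Via the embedding between fractional Sobolev scales, this integrability gain translates into a differentiability gain, yielding $u\in W^{s+\eta,p}_{\loc}$ with a quantitative estimate of the form
\[
[u]_{W^{s+\eta,p}(B_R)}^{p}\lesssim R^{-\eta p}\left(\int_{B_{2R}}\!\!\int_{\R^n}\frac{|u(x)-u(y)|^{p}}{|x-y|^{n+sp}}\,dx\,dy+\|f\|_{L^{p/(p-1)}(B_{2R})}[u]_{W^{s,p}(B_{2R})}\right).
\]
The $R^{-\eta p}=R^{-\delta}$ factor is forced by dimensional analysis, and then fractional Morrey on the improved space produces exactly \eqref{th:1.4} after taking $p$-th roots.

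The main obstacle I anticipate is the second step: making Gehring-type self-improvement work for \emph{systems} (where one cannot use truncations, De Giorgi classes, or any scalar level-set argument), and doing so with a gain $\eta$ that is \emph{uniform} in $s\in(s_1,s_2)$ and $p\in(p_1,p_2)$. The uniformity matters because many fractional Sobolev--Poincar\'e constants degenerate as $s\to 0$ or $s\to 1$, so the compactness of the parameter range $[s_1,s_2]\times[p_1,p_2]$ is genuinely used. The recent $L^\infty$ bound of \cite{BDNS24} may be invoked to close the reverse H\"older iteration if the forcing term resists direct treatment. Once a uniform $\eta$ is secured the remaining arguments (Caccioppoli, Morrey, and the scaling bookkeeping leading to the $R^{-\delta/p}$ factor) are routine.
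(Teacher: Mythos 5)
Your plan diverges from the paper's actual argument, and the second step has a genuine gap. The crucial claim that ``this integrability gain translates into a differentiability gain'' via an ``embedding between fractional Sobolev scales'' is not a valid step: there is no embedding that converts higher integrability of the local energy density $\Gamma(y):=\bigl(\int_{\R^n}|u(x)-u(y)|^p|x-y|^{-n-sp}\,dx\bigr)^{1/p}$ into membership of $u$ in $W^{s+\eta,p}_{\loc}$. The phenomenon you are appealing to (nonlocal self-improving differentiability \`a la Kuusi--Mingione--Sire) is a deep theorem that uses the PDE structure essentially and is \emph{not} an embedding; for fractional $p$-Laplace \emph{systems} with $p\neq 2$ no such result is available. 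Indeed, proving a uniform differentiability gain for the fractional $p$-Laplace system would already imply the H\"older continuity asserted in \Cref{th:BLS paper thm 1.4}, which the paper explicitly flags as open. Your route therefore replaces the desired conclusion by a strictly harder one. (As a smaller point, the test function $\psi^p u$ should really be $\psi^p(u-c)$ for a suitable constant $c$; without the subtraction the Caccioppoli estimate does not close, since the Gagliardo seminorm is translation invariant but $\psi^p u$ is not.)

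The paper's proof is much more elementary and makes no attempt to gain integrability or differentiability. It tests the equation with $\varphi=\eta\,(u-(u)_A)$ where $A=B_{2r}\setminus B_r$, absorbs, and then uses the hole-filling trick to produce a contraction
\[
\int_{B_r}\int_{\R^n}\frac{|u(x)-u(y)|^p}{|x-y|^{n+sp}}\,dx\,dy\;\le\;\tau\int_{B_{4r}}\int_{\R^n}\frac{|u(x)-u(y)|^p}{|x-y|^{n+sp}}\,dx\,dy+Cr^s\|f\|_{L^{p'}}[u]_{W^{s,p}},
\]
with a fixed $\tau<1$ independent of $r$. Iterating gives $(r/R)^\delta$-decay of the localized energy, and the elementary bound $\int_{B_r}|u-(u)_{B_r}|^p\lesssim r^{sp}\int_{B_r}\int_{\R^n}|u(x)-u(y)|^p|x-y|^{-n-sp}$ then produces a Campanato-type decay $\lesssim r^{sp+\delta}$. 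Since $sp+\delta>n$ by hypothesis, Campanato's theorem (\Cref{th:campanato}) yields exactly the H\"older exponent $\alpha=(sp+\delta-n)/p$ and, with careful bookkeeping, the scale factor $R^{-\delta/p}$ in \eqref{th:1.4}. Gehring's lemma is used in the paper only in the separate dimension-reduction step (proof of \Cref{corollaryregularity}), not here. If you revise your approach, drop the Gehring/self-improvement plan entirely and instead pursue the hole-filling iteration directly — the hypothesis $sp+\delta>n$ is precisely what lets this elementary Morrey--Campanato argument close without any gain in regularity indices.
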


The proof of \Cref{th:alternate for BLS} is quite standard and we sketch it in \Cref{s:nongeomplapsys}.
\subsection{Outline of the paper} In \Cref{section prelim}, we introduce the preliminary notations and theorems used in the paper. \Cref{section caccio} shows that the minimizers satisfy a Caccioppoli-type estimate. The proof of this involves the $W^{s,p}$-extenstion property into compact submanifolds of $\R^N$. The proof of the extension theorem is presented in \Cref{section extension}. The significant step involved in the proof of the main theorem is the observation that the minimizers follow a decay estimate which is given in \Cref{section decay}. Precisely, the normalized energy
$$R^{sp-n}\int_{B_R(x_0)}\int_{\R^n}\frac{|u(x)-u(y)|^p}{|x-y|^{n+sp}}dx\;dy$$
decays as $R\to 0$. The proof of the decay estimate follows by the method of contradiction. We look at a sequence of minimizers which do not exhibit the decay estimate but which have total energy
$$\int_{B_1}\int_{\R^n}\frac{|u(x)-u(y)|^p}{|x-y|^{n+sp}}dx\;dy=\varepsilon_i^p$$
which converges to $0$ as $i\to \infty$. We define a sequence of normalized functions $v_i=\varepsilon_i^{-1}(u_i-(u_i)_{B_1})$ and observe that these functions converge to a $W^{s,p}$-limit $v$ which satisfies
$$(-\Delta_p)^s v=g\;\;\;\;\text{in}\;\;\;B_1.$$
Then, we use the regularity of $v$ and the Caccioppoli-type estimate for the minimizers to get a contradiction. In \Cref{section proof}, we prove the main theorem. The singular set $\Sigma$ is defined as the set of all points $x_0$ around which the normalized energy fails to approach zero as $R\to 0$. We use Campanato's theorem to show that the minimizers are locally H\"older continuous around points with a decay estimate. The idea of this proof is motivated by results from Hardt and Lin \cite{HardtLin87} where they proved the H\"older continuity of mappings between manifolds minimizing the energy
$$\int_M |\nabla u|^p.$$
By using Caccioppoli-type estimate, we reduced the dimension of the singular set $\Sigma$ to a dimension strictly smaller than $n-sp$.
\subsection{Assumptions.}\label{assumptions} Throughout the paper, we assume that $\Omega$ is a bounded domain in $\R^n$. For simplicity, we assume that $\Omega$ has a smooth boundary. For given $1< p <\infty$, we assume that $\mathcal{N}$ is a compact smooth $(\lambda-2)$- simply connected manifold of $\R^N$, i.e., the homotopy groups $\pi_0(\mathcal{N})=\pi_1(\mathcal{N})=...=\pi_{\lambda-2}(\mathcal{N})=0$ where $\lambda\in \mathbb{N}$ is such that $\max\{p,2\}<\lambda \leq N$.
\subsection*{Acknowledgement}
A.V. received funding through NSF Career DMS-2044898.  The author would like to thank Katarzyna Mazowiecka and Armin Schikorra for the helpful discussions.

\section{Preliminaries and Notations}\label{section prelim}
\subsection{Notations} For $x=(x^1,...,x^n)\in \R^n$, $|\cdot|$ denotes the standard Euclidean norm in $\R^n$ given by
$|x|=\sqrt{(x^1)^2+...+(x^n)^2}.$

For $r>0$ and $x_0\in \R^n$, the ball centered at $x_0$ of radius $r$ is given by
$$B_r(x_0)=\{x\in \R^n:\;|x-x_0|<r\}.$$
If $x_0=0$, we denote $B_r(0)$ as $B_r$ for simplicity.

Let $\Omega$ be an open subset of $\R^n$ and $f: \Omega \to \R^N$ be a measurable function. $\mathcal{L}^n(\Omega)$ denotes the $n$-dimensional Lebesgue measure of $\Omega$ and the mean value is given by
$$(f)_{\Omega}=\frac{1}{\mathcal{L}^n(\Omega)}\int_{\Omega}f=\barint_{\Omega}f.$$

For $\alpha>0$, $\mathcal{H}^\alpha(\Omega)$ denotes the $\alpha$-dimensional Hausdorff measure of the measurable subset $\Omega$ of $\R^n$.

For real numbers $a$ and $b$, the expression $a \aleq b$ means $a\leq C b$ for some constant $C>0$ that depends on the parameters, domain or range, but not on the functions involved in the result. Similarly, $a\approx b$ means $a \aleq b$ and $b \aleq a$. We use $\aleq_{\gamma}$ to  indicate that the constant depends on $\gamma$.

\begin{definition}
    Let $0<\alpha\leq 1$. Let $\Omega$ be an open subset of $\R^n$. The H\"older space, denoted by $C^{0,\alpha}(\Omega,\R^N)$, is defined as the set of functions such that
    $$\Vert f \Vert_{C^{0,\alpha}(\Omega,\R^N)}:= [f]_{C^{0,\alpha}(\Omega,\R^N)}+ \Vert f \Vert_{L^\infty(\Omega,\R^N)}<+\infty,$$
    where $[\cdot]_{C^{0,\alpha}(\Omega,\R^N)}$ is the seminorm given by
    $$[f]_{C^{0,\alpha}(\Omega,\R^N)}:= \sup_{\substack{x,y\in \Omega\\x\neq y}}\frac{|f(x)-f(y)|}{|x-y|^\alpha}.$$
\end{definition}
\begin{definition}\label{def:tailspace}
    Let $p>0$ and $\alpha>0$. We introduce the \textit{tail space}
    $$L^p_\alpha(\R^n,\R^N):=\left\{f\in L^p_{loc}(\R^n,\R^N):\;\int_{\R^n}\frac{|f(x)|^p}{1+|x|^{n+\alpha}}dx<+\infty\right\}.$$
\end{definition}
We have the following theorems and lemmas which will be used in proving the main result of the paper. The first one is called Campanato's theorem which gives a characterization of H\"older continuous functions, see, e.g., \cite[Theorem 5.5]{giaquinta-martinazzi}.
\begin{theorem}[Campanato's theorem]\label{th:campanato}
    Let $\Omega$ be a domain in $\R^n$ with smooth boundary. Let $f\in L^p(\Omega,\R^N)$ and assume that for $n<\lambda\leq n+p$
      $$\Lambda:=\sup_{B_r(x)\subset \Omega}r^{-\lambda} \int_{B_r(x)}|f-(f)_{B_r(x)}|^p<\infty,$$
      then $f\in C^{0,\frac{\lambda-n}{p}}(\overline{\Omega},\R^N)$.
\end{theorem}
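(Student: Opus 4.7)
The plan is to follow the classical Campanato argument: show that the mean values $(f)_{B_r(x)}$ form a Cauchy net as $r\to 0$, identify a continuous representative of $f$, and then bound its oscillation between two nearby points by comparing the two relevant sequences of means. The exponent $\alpha=(\lambda-n)/p$ will appear naturally from the geometric series produced by the dyadic telescoping, which converges precisely because $\lambda>n$.

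First I would fix $x\in \Omega$ and a radius $R$ small enough that $B_R(x)\subset\Omega$, and set $r_k=2^{-k}R$. Using H\"older's inequality together with the hypothesis, I obtain
\[
|(f)_{B_{r_{k+1}}(x)}-(f)_{B_{r_k}(x)}|^p\;\lesssim\; r_{k+1}^{-n}\int_{B_{r_k}(x)}|f-(f)_{B_{r_k}(x)}|^p\;\lesssim\;\Lambda\, r_k^{\lambda-n}.
\]
Taking $p$-th roots and summing the resulting geometric series (which converges because $(\lambda-n)/p>0$) gives the key Cauchy estimate
\[
|(f)_{B_{r_j}(x)}-(f)_{B_{r_k}(x)}|\;\lesssim\;\Lambda^{1/p}\,r_j^{(\lambda-n)/p},\qquad j\le k.
\]
A standard interpolation between dyadic radii upgrades this to continuous radii. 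I then define $\tilde f(x):=\lim_{r\to 0}(f)_{B_r(x)}$; by the Lebesgue differentiation theorem $\tilde f=f$ a.e., so $\tilde f$ is the desired representative.

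To obtain H\"older continuity, for $x,y\in\Omega$ with $\rho:=2|x-y|$ small, I split
\[
|\tilde f(x)-\tilde f(y)|\;\le\;|\tilde f(x)-(f)_{B_\rho(x)}|+|(f)_{B_\rho(x)}-(f)_{B_\rho(y)}|+|(f)_{B_\rho(y)}-\tilde f(y)|.
\]
The two outer terms are controlled by the Cauchy estimate and give the bound $C\Lambda^{1/p}\rho^{(\lambda-n)/p}$. For the middle term I use that the intersection $B_\rho(x)\cap B_\rho(y)$ contains a ball of radius comparable to $\rho$, so that averaging the constant difference $(f)_{B_\rho(x)}-(f)_{B_\rho(y)}$ over that intersection, applying the triangle inequality, and invoking the hypothesis on both balls yields
\[
|(f)_{B_\rho(x)}-(f)_{B_\rho(y)}|^p\;\lesssim\;\rho^{-n}\!\!\int_{B_\rho(x)}\!\!|f-(f)_{B_\rho(x)}|^p+\rho^{-n}\!\!\int_{B_\rho(y)}\!\!|f-(f)_{B_\rho(y)}|^p\;\lesssim\;\Lambda\,\rho^{\lambda-n}.
\]
Combining the three pieces gives $|\tilde f(x)-\tilde f(y)|\lesssim |x-y|^{(\lambda-n)/p}$ on the interior.

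The main obstacle I anticipate is not the interior estimate but extending H\"older continuity up to $\overline{\Omega}$, as the statement asks. For interior points the dyadic scheme only needs that $B_\rho(x),B_\rho(y)\subset\Omega$, which requires $\rho$ small enough relative to $\dist(x,\partial\Omega)$; near the boundary one must replace the concentric balls by intrinsic neighbourhoods. This is where the smooth boundary assumption enters: smoothness gives a uniform measure-density (or interior cone) condition so that for any $x_0\in\overline\Omega$ and small $r$, $|\Omega\cap B_r(x_0)|\gtrsim r^n$, and any two nearby points in $\overline\Omega$ can be joined through chains of balls of comparable size lying in $\Omega$. With that geometric input the previous estimates go through verbatim, and one concludes $\tilde f\in C^{0,(\lambda-n)/p}(\overline\Omega,\R^N)$.
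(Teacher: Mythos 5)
The paper does not prove Campanato's theorem at all; it is cited as a known result from the textbook reference \cite[Theorem 5.5]{giaquinta-martinazzi}, so there is no in-paper proof to compare against.

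Your argument is the standard (and correct) Campanato proof: the dyadic telescoping $|(f)_{B_{r_{k+1}}(x)}-(f)_{B_{r_k}(x)}|^p\aleq r_{k+1}^{-n}\int_{B_{r_k}(x)}|f-(f)_{B_{r_k}(x)}|^p\aleq \Lambda r_k^{\lambda-n}$, the geometric series that converges precisely because $\lambda>n$, identification of the continuous representative via Lebesgue differentiation, and the three-term split with the overlap-ball trick for $|(f)_{B_\rho(x)}-(f)_{B_\rho(y)}|$. All the displayed estimates are correct as written. The role of $\lambda\le n+p$ (which your argument never uses) is only to guarantee the exponent $(\lambda-n)/p$ lies in $(0,1]$ so that the H\"older seminorm is non-trivial.

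The one point you rightly flag as the real subtlety is the passage from interior H\"older control to $C^{0,\alpha}(\overline\Omega)$, since the hypothesis is stated only for balls $B_r(x)\subset\Omega$, so your three-term split cannot be applied directly when $\dist(x,\partial\Omega)<2|x-y|$. Your sketch is correct: the smooth (or uniformly Lipschitz) boundary gives a uniform interior cone so that from any boundary point one has a chain of interior balls of geometrically decaying radii, and the resulting telescoping sum again converges by $\lambda>n$, producing both the extension of $\tilde f$ to $\partial\Omega$ and the H\"older bound between nearby boundary or near-boundary points. If one wanted to be scrupulous one would write out this chaining explicitly, but the idea is standard and is exactly what is needed; there is no gap.
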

A more general statement of the following lemma and its proof can be found in \cite[Lemma 3.2.2]{Ziemer}.
\begin{lemma}\label{Frostmann}
    Let $p\in \left[1,\infty\right)$ and $\alpha\in \left[0,n\right).$ Assume that $f\in W^{s,p}(\R^n,\R^N)$ and define the set
$$E:=\left\{x:\;\;\limsup_{r\to 0}r^{-\alpha}\int_{B_r(x)}\int_{\R^n}\frac{|f(x)-f(y)|^p}{|x-y|^{n+sp}}dx\; dy>0\right\}.$$
Then $\mathcal{H}^\alpha(E)=0.$
\end{lemma}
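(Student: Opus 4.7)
The plan is to recognize the inner double integral in the definition of $E$ as $\mu(B_r(x))$ for a finite Borel measure $\mu$ that is absolutely continuous with respect to $\mathcal{L}^n$, and then combine the Lebesgue differentiation theorem with a Vitali covering argument. Introduce the density
$$g(z):=\int_{\R^n}\frac{|f(z)-f(y)|^p}{|z-y|^{n+sp}}\,dy,$$
so that $\|g\|_{L^1(\R^n)}=[f]_{W^{s,p}(\R^n)}^p<\infty$ by Fubini, and let $\mu$ be the finite Borel measure $d\mu=g\,d\mathcal{L}^n$. Then the set from the statement is exactly
$$E=\{x\in\R^n:\ \limsup_{r\to 0}r^{-\alpha}\mu(B_r(x))>0\}.$$

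The first key step is to prove $\mathcal{L}^n(E)=0$, hence $\mu(E)=0$ by absolute continuity. By the Lebesgue differentiation theorem applied to $g\in L^1(\R^n)$, at $\mathcal{L}^n$-a.e.\ $x$ we have $|B_r(x)|^{-1}\mu(B_r(x))\to g(x)<\infty$ as $r\to 0$, and using the hypothesis $\alpha<n$ we may rewrite
$$r^{-\alpha}\mu(B_r(x))=|B_1|\,r^{n-\alpha}\cdot\frac{\mu(B_r(x))}{|B_r(x)|}\to 0\quad\text{as }r\to 0$$
at every such Lebesgue point. Thus $E$ is contained in the exceptional null set, giving $\mathcal{L}^n(E)=0$ and therefore $\mu(E)=0$.

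The second step is a standard Vitali-type comparison between $\mathcal{H}^\alpha$ and $\mu$. Decompose $E=\bigcup_{k\geq 1}E_{1/k}$, where $E_\eta:=\{x:\limsup_{r\to 0}r^{-\alpha}\mu(B_r(x))>\eta\}$; it suffices to show $\mathcal{H}^\alpha(E_\eta)=0$ for every $\eta>0$. Fix $\varepsilon,\delta>0$; outer regularity of the Radon measure $\mu$ (together with $\mu(E_\eta)=0$) yields an open set $U\supset E_\eta$ with $\mu(U)<\varepsilon$. For each $x\in E_\eta$ pick $r_x\in(0,\delta)$ satisfying $B_{r_x}(x)\subset U$ and $\mu(B_{r_x}(x))>\eta r_x^\alpha$, and apply the Vitali $5r$-covering lemma to extract a disjoint subfamily $\{B_{r_i}(x_i)\}$ with $E_\eta\subset\bigcup_i B_{5r_i}(x_i)$. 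Then
$$\mathcal{H}^\alpha_{10\delta}(E_\eta)\aleq\sum_i r_i^\alpha\leq\frac{1}{\eta}\sum_i\mu(B_{r_i}(x_i))\leq\frac{\mu(U)}{\eta}<\frac{\varepsilon}{\eta}.$$
Sending $\varepsilon\to 0$ and then $\delta\to 0$ yields $\mathcal{H}^\alpha(E_\eta)=0$, and countable subadditivity of $\mathcal{H}^\alpha$ concludes the proof.

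The argument is entirely standard---a classical Frostman-type comparison between a Radon measure and Hausdorff measure---so there is no real analytic obstacle. The only thing to notice is that the extra factor $r^{n-\alpha}$, which is available precisely because $\alpha<n$, is what converts Lebesgue differentiation into decay of $r^{-\alpha}\mu(B_r(x))$.
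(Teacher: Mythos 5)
Your argument is correct and it is precisely the classical Frostman-comparison proof that the paper does not reproduce but instead delegates to \cite[Lemma~3.2.2]{Ziemer}. Writing $\mu = g\,\mathcal L^n$ with $g(z)=\int_{\R^n}|f(z)-f(y)|^p|z-y|^{-n-sp}\,dy\in L^1(\R^n)$ is the right reformulation, and both halves of your argument are clean: the Lebesgue differentiation step correctly turns finiteness of $g$ a.e.\ together with $\alpha<n$ into $r^{-\alpha}\mu(B_r(x))=c\,r^{n-\alpha}\,\barint_{B_r(x)}g\to 0$, which gives $\mathcal L^n(E)=0$ and hence $\mu(E)=0$ by absolute continuity; and the Vitali $5r$-covering estimate then transfers $\mu(E_\eta)=0$ into $\mathcal H^\alpha_{10\delta}(E_\eta)\lesssim \varepsilon/\eta$ for every $\varepsilon>0$ via outer regularity. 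You also correctly identify that the first step is genuinely needed: without $\mu(E_\eta)=0$, the covering bound only yields $\mathcal H^\alpha(E_\eta)\lesssim\mu(E_\eta)/\eta$, which alone is inconclusive. Minor remarks for completeness: the $5r$-lemma applies because the radii $r_x<\delta$ are uniformly bounded; and for the endpoint $\alpha=0$ the conclusion $\mathcal H^0(E)=0$ means $E=\varnothing$, which is also consistent since $\mu(B_r(x))\to\mu(\{x\})=0$ for every $x$ by absolute continuity of $\mu$. No gaps; the proposal and the paper's cited proof take the same route.
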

Next, we have the following embedding theorem \cite[Theorem 7.1]{Hitchhikers}.
\begin{theorem}[Rellich-Kondrachov theorem]\label{th:Rellish}
Let $\Omega \subset \R^n$ be an open set with bounded Lipschitz continuous boundary. Let $1\leq p<\infty$ and $0<s<1$. Assume that $\{f_k\}_{k\in \N}\subset  W^{s,p}(\Omega)$ is bounded, i.e.,
$$\sup_{k\in \N}\Vert f_k\Vert_{W^{s,p}(\Omega)}<\infty.$$
Then there exists a subsequence $\{f_{k_i}\}\subset\{f_k\}_{k\in \N} $ and $f\in L^p(\Omega)$ such that $f_{k_i}$ converges to $f$ strongly in $L^p(\Omega)$, and moreover, the convergence is pointwise a.e.
\end{theorem}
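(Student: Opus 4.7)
The plan is to verify the Fr\'echet--Kolmogorov compactness criterion in $L^p(\R^n)$ for a suitably extended family, namely uniform $L^p$-boundedness together with uniform equicontinuity under translations. Throughout, $\{f_k\}$ denotes the given bounded sequence in $W^{s,p}(\Omega)$.

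First, I would reduce to a problem on all of $\R^n$ by invoking the extension theorem for fractional Sobolev spaces on bounded Lipschitz domains (see, e.g., the extension result in the Hitchhiker's guide): there exists a bounded linear operator $E\colon W^{s,p}(\Omega)\to W^{s,p}(\R^n)$ and an open set $\Omega'\Supset\Omega$ such that $\supp(Ef)\subset \overline{\Omega'}$ for every $f$. Setting $\tilde f_k := E f_k$, the family $\{\tilde f_k\}\subset W^{s,p}(\R^n)$ has common compact support in $\overline{\Omega'}$ and satisfies $\sup_k \|\tilde f_k\|_{W^{s,p}(\R^n)}<\infty$. It suffices to show that $\{\tilde f_k\}$ has a subsequence convergent in $L^p(\overline{\Omega'})$ (hence a.e.~on a further subsequence), which restricts to the desired convergence on $\Omega$.

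Next, the technical heart of the argument is the following uniform mollification estimate. Let $\rho\in C_c^\infty(B_1)$ be a nonnegative standard mollifier with $\int \rho=1$ and set $\rho_\eps(y)=\eps^{-n}\rho(y/\eps)$. By Jensen's inequality,
\[
|(\tilde f_k * \rho_\eps)(x)-\tilde f_k(x)|^p \;\le\; \int_{B_\eps} \rho_\eps(y)\,|\tilde f_k(x-y)-\tilde f_k(x)|^p\,dy.
\]
Integrating in $x$, swapping integrals, and using that $|y|\le \eps$ on the support of $\rho_\eps$ gives $\rho_\eps(y)\,|y|^{n+sp}\le C\eps^{sp}$, so that
\[
\|\tilde f_k*\rho_\eps-\tilde f_k\|_{L^p(\R^n)}^p \;\le\; C\eps^{sp}\int_{\R^n}\int_{\R^n}\frac{|\tilde f_k(x-y)-\tilde f_k(x)|^p}{|y|^{n+sp}}\,dx\,dy \;=\; C\eps^{sp}\,[\tilde f_k]_{W^{s,p}(\R^n)}^p.
\]
This is uniform in $k$, and is the fractional analogue of the gradient-free translation estimate.

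With this in hand I would conclude by a standard diagonal argument. For each fixed $\eps>0$, $\tilde f_k * \rho_\eps$ is smooth with $\|\tilde f_k *\rho_\eps\|_\infty+\|\nabla(\tilde f_k *\rho_\eps)\|_\infty \le C(\eps)\,\|\tilde f_k\|_{L^p}$ and common compact support, so Arzel\`a--Ascoli (or the classical Rellich--Kondrachov theorem) yields an $L^p$-convergent subsequence. Taking $\eps=\eps_j\to 0$ and extracting diagonally produces a subsequence $\{\tilde f_{k_i}\}$ such that $\tilde f_{k_i}*\rho_{\eps_j}$ converges in $L^p(\R^n)$ for every $j$. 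Combining this with the uniform mollification estimate via
\[
\|\tilde f_{k_i}-\tilde f_{k_l}\|_{L^p} \;\le\; \|\tilde f_{k_i}-\tilde f_{k_i}*\rho_{\eps_j}\|_{L^p} + \|\tilde f_{k_i}*\rho_{\eps_j}-\tilde f_{k_l}*\rho_{\eps_j}\|_{L^p} + \|\tilde f_{k_l}*\rho_{\eps_j}-\tilde f_{k_l}\|_{L^p},
\]
choosing $\eps_j$ small first (to make the outer terms small by Step 2, uniformly in $i,l$) and then $i,l$ large (to make the middle term small by the precompactness for fixed $\eps_j$), shows $\{\tilde f_{k_i}\}$ is Cauchy in $L^p(\R^n)$. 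Passing to a further subsequence yields pointwise a.e.\ convergence. The main delicate step is the uniform mollification estimate; everything else is a routine extension/diagonal argument.
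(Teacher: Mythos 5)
The paper does not prove this statement; it cites it as \cite[Theorem 7.1]{Hitchhikers}, so there is no in-paper proof to compare against. Your argument is correct: the extension to a compactly supported family on $\R^n$, the uniform mollification estimate $\|\tilde f_k * \rho_\eps - \tilde f_k\|_{L^p}^p \le C\eps^{sp}[\tilde f_k]_{W^{s,p}(\R^n)}^p$ (valid since $\rho_\eps(y)\le C\eps^{-n}\le C\eps^{sp}|y|^{-n-sp}$ on $\supp\rho_\eps$), and the $3$-$\eps$/diagonal extraction are all sound, and the only subtlety (why the product by a cutoff after extension stays in $W^{s,p}$, and why the mollified family is precompact for fixed $\eps$) is handled correctly. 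It is essentially the same reduction as in the cited reference; the only cosmetic difference is that you smooth by convolution and invoke Arzel\`a--Ascoli for each fixed $\eps$, whereas the reference verifies the Kolmogorov--Riesz--Fr\'echet criterion directly via a uniform translation estimate $\sup_k\|\tilde f_k(\cdot+h)-\tilde f_k\|_{L^p}\to 0$ as $h\to 0$ --- two equivalent routes to the same conclusion.
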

The proof of the following density theorem can be found in \cite[Theorem 6.62]{giovanni}.
\begin{theorem}\label{th:density giovanni}
   Let $1\leq p<\infty$ and $0<s<1$. For every $f\in \dot{W}^{s,p}(\R^n)$ there exists a sequence $\{f_k\}_k$ in $\dot{W}^{s,p}(\R^n)\cap C^\infty(\R^n)$ such that
   $$\Vert f_k\Vert_{L^p(\R^n)}\leq \Vert f\Vert_{L^p(\R^n)},\;\;[f_k]_{W^{s,p}(\R^n)}\leq [ f]_{W^{s,p}(\R^n)},$$
   and
   $$[ f -f_k]_{W^{s,p}(\R^n)}\to 0$$
   as $k\to \infty$. In particular, $W^{s,p}(\R^n)\cap C^\infty(\R^n)$ is dense in $W^{s,p}(\R^n)$.
\end{theorem}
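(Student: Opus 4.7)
The plan is to prove the density theorem via standard mollification. Let $\eta \in C_c^\infty(\R^n)$ be a non-negative radial bump with $\int_{\R^n} \eta = 1$, set $\eta_k(x) := k^n \eta(kx)$, and define $f_k := f \ast \eta_k$. Since $f \in L^p_{\loc}(\R^n,\R^N)$ (part of the definition of $\dot W^{s,p}$), the convolution is pointwise well-defined and automatically $C^\infty$, so the smoothness claim is automatic. The three nontrivial assertions to establish are the $L^p$-bound, the seminorm bound, and the strong seminorm convergence.

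The first two inequalities are essentially immediate. For $\|f_k\|_{L^p(\R^n)} \le \|f\|_{L^p(\R^n)}$, I would invoke Young's convolution inequality together with $\|\eta_k\|_{L^1(\R^n)} = 1$ (and note that the bound is trivial if $f \notin L^p$). For the seminorm bound, I would apply Jensen's inequality against the probability measure $\eta_k(z)\,dz$ to obtain $|f_k(x) - f_k(y)|^p \leq \int \eta_k(z)|f(x-z) - f(y-z)|^p\,dz$, then divide by $|x-y|^{n+sp}$, integrate over $\R^{2n}$, and apply Fubini together with the translation invariance of the Gagliardo double integral to collapse the right-hand side to $[f]_{W^{s,p}(\R^n)}^p$.

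The substantive step is the convergence $[f - f_k]_{W^{s,p}(\R^n)} \to 0$. The clean way to organize this is to reinterpret the Gagliardo seminorm as a plain $L^p$-norm on the product space: set $G(x,y) := (f(x) - f(y))|x-y|^{-n/p-s}$, so that $\|G\|_{L^p(\R^{2n},\R^N)} = [f]_{W^{s,p}(\R^n)}$. Since $|x-y|$ is invariant under the diagonal translation $(x,y) \mapsto (x-z, y-z)$, the analogous difference quotient for $f_k$ equals $\int \eta_k(z) G_z\,dz$, where $G_z(x,y) := G(x-z,y-z)$. Minkowski's integral inequality then yields $[f - f_k]_{W^{s,p}(\R^n)} \leq \int \eta_k(z) \|G - G_z\|_{L^p(\R^{2n},\R^N)}\,dz$.

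I do not expect any serious obstacle here; the only mildly subtle point is recognizing the seminorm as an $L^p$-norm on $\R^{2n}$ so that standard convolution estimates transfer over. To finish, note that the diagonal translation is Lebesgue-measure-preserving on $\R^{2n}$, hence strong $L^p$-continuity of translations gives $\|G - G_z\|_{L^p} \to 0$ as $z \to 0$; since $\supp \eta_k \subset B_{1/k}(0)$, the integral on the right vanishes as $k \to \infty$. The final density assertion for $W^{s,p}(\R^n) \cap C^\infty(\R^n)$ in $W^{s,p}(\R^n)$ then follows by combining this seminorm convergence with the classical mollifier convergence $\|f - f_k\|_{L^p(\R^n)} \to 0$, which holds whenever $f \in L^p(\R^n)$.
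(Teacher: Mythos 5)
Your proof is correct and is the standard mollification argument; the paper does not prove this theorem itself but cites \cite{giovanni}, whose proof proceeds along the same lines. The key steps you carry out — Young's inequality for the $L^p$ bound, Jensen plus Fubini plus translation invariance of the Gagliardo double integral for the seminorm bound, and the reinterpretation of $[\,\cdot\,]_{W^{s,p}}$ as an $L^p(\R^{2n})$-norm so that Minkowski's integral inequality and strong $L^p$-continuity of translations give the convergence — are all sound.
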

We have a local higher integrability result for non-negative functions, called Gehring's lemma. For $\sigma =2$, the proof can be found in \cite[Theorem 1.5]{gehring}. It is easy to adapt their proof to a general $\sigma > 1$.
\begin{theorem}\label{th:gehring}
    Let $\Omega$ be a domain in $\R^n$, $1<p<\infty$, and $\sigma >0$. Assume $f\in L^{p}_{loc}(\Omega)$ is a non-negative function which satisfies
    $$\left(\barint_Q f^p\right)^{1/p}\leq C_1 \barint_{\sigma Q} f$$
    for each cube $Q$ such that $\sigma Q\subset \Omega$ with the constant $C_1\geq 1$ independent of the cube $Q$. Then there exist $q>p$ so that
    $$\left(\barint_Q f^q\right)^{1/q}\leq C_2 \left(\barint_{\sigma Q} f^p\right)^{1/p},$$
    where the constant $C_2\geq 1$ is independent of the cube $Q$. In particular, $f\in L^{q}_{loc}(\Omega)$.
\end{theorem}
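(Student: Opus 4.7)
The plan is to adapt Gehring's original Calderón--Zygmund stopping-time argument from the case $\sigma=2$ treated in \cite{gehring} to a general dilation factor $\sigma>0$. The reverse Hölder hypothesis is self-improving: it forces the $L^p$-mass on a superlevel set of $f$ to be controlled by an $L^1$-type integral on a slightly lower superlevel set, and once this is written at the distributional level, higher integrability follows from a layer-cake/Fubini computation.

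First I would fix a cube $Q_0$ with $2\sigma Q_0\subset\Omega$, truncate $f$ by $f_T:=\min\{f,T\}$ (so that all quantities are finite a priori), and normalise so that $\barint_{2\sigma Q_0}f^p=1$. For $t$ above a threshold $t_0$ depending only on this average, I would run a dyadic Calderón--Zygmund decomposition of $f^p$ on $Q_0$ at level $t^p$, producing a disjoint family $\{Q_j\}$ with $t^p<\barint_{Q_j}f^p\leq 2^n t^p$ and $f\leq t$ almost everywhere on $Q_0\setminus\bigcup_j Q_j$. Applying the hypothesis on each $Q_j$ gives $t\leq C_1\barint_{\sigma Q_j}f$. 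Splitting $\sigma Q_j$ according to $\{f\leq\lambda t\}$ and $\{f>\lambda t\}$ with $\lambda=1/(2C_1)$ and absorbing the small-value contribution yields
\[
\int_{\sigma Q_j\cap\{f>\lambda t\}}f\;\geq\;\tfrac{t}{2C_1}|\sigma Q_j|.
\]
Summing over $j$ with bounded overlap of $\{\sigma Q_j\}$ (cost depending only on $n$ and $\sigma$) leads to the distributional inequality
\[
\int_{Q_0\cap\{f>t\}}f^p\;\leq\;A\,t^{p-1}\int_{2\sigma Q_0\cap\{f>\lambda t\}}f\;+\;B,\qquad t>t_0,
\]
with $A,B$ depending only on $C_1,n,p,\sigma$. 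Multiplying by $t^{\varepsilon-1}$, integrating over $t\in(t_0,\infty)$, and interchanging the order of integration by Fubini produces, after a short computation,
\[
\int_{Q_0}f^{p+\varepsilon}\;\leq\;\frac{A\lambda^{-\varepsilon}}{p+\varepsilon}\int_{2\sigma Q_0}f^{p+\varepsilon}\;+\;C.
\]
Taking $\varepsilon>0$ small enough so that the prefactor on the right is strictly less than $1$, a standard covering argument that trades $2\sigma Q_0$ for $\sigma Q_0$ yields the reverse Hölder inequality with the improved exponent $q=p+\varepsilon$, and monotone convergence sends $T\to\infty$.

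The principal obstacle is justifying the absorption step in the last display: it requires $\int f^{p+\varepsilon}<\infty$ a priori, which is precisely what we are trying to prove. The truncation $f_T$ is the device that keeps both sides finite and defers the limit $T\to\infty$ to the end. A secondary subtlety is maintaining uniform dependence of constants on $\sigma$: both the bounded-overlap constant for $\{\sigma Q_j\}$ and the covering argument trading $2\sigma Q_0$ for $\sigma Q_0$ introduce geometric factors, but neither depends on $f$, which is enough to give the uniform $C_2$ of the statement. The remainder is a routine adaptation of the $\sigma=2$ argument.
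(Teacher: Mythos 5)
Your overall strategy matches the classical Gehring argument (the paper itself does not prove the lemma; it cites \cite[Theorem 1.5]{gehring} and asserts the adaptation to general $\sigma>1$ is routine, so a blind proof is exactly what is called for). However, the step you pass off as routine --- ``summing over $j$ with bounded overlap of $\{\sigma Q_j\}$, cost depending only on $n$ and $\sigma$'' --- is a genuine gap: the dilates of Calder\'on--Zygmund stopping cubes do \emph{not} in general have overlap bounded in terms of $n$ and $\sigma$. For a concrete failure, take $n=1$, $\sigma=3$, $Q_0=[0,1]$ and $x=2/3=0.1010\ldots_2$; with $D_k$ the dyadic interval of side $2^{-k}$ containing $x$, set $Q_{(i)} := D_{2i}\setminus D_{2i+1}$. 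These are pairwise disjoint dyadic intervals of total measure $2/3$, and $x\in 3Q_{(i)}$ for every $i$. Putting $f^p$ equal to a value slightly above $t^p$ on each $Q_{(i)}$ and $0$ elsewhere makes the $Q_{(i)}$ precisely the stopping cubes at level $t^p$ (one checks $\barint_{D_m} f^p< t^p$ for all $m$, the geometric decay of $|Q_{(i)}|$ keeping the ancestor averages below threshold). Hence $\sum_j \chi_{3 Q_j}(x)=\infty$, and the passage from $\sum_j\int_{\sigma Q_j\cap\{f>\lambda t\}}f$ to a single integral over $2\sigma Q_0$ does not follow from overlap.

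What the argument actually needs is weaker and is supplied by a Vitali $5r$-covering step rather than a bounded-overlap estimate; you should state this explicitly. Since the $Q_j$ themselves are disjoint, $\sum_j|Q_j| = \big|\bigcup_j Q_j\big| \leq \big|\bigcup_j \sigma Q_j\big|$. Apply the covering lemma to the family $\{\sigma Q_j\}$ (cubes of bounded diameter contained in $2\sigma Q_0$, which requires $\sigma\geq 1$) to extract a pairwise \emph{disjoint} subfamily $\{\sigma Q_{j_k}\}$ with $\bigcup_j\sigma Q_j\subset\bigcup_k 5\sigma Q_{j_k}$, and sum the per-cube estimate only over the selected $j_k$, where the domains of integration are disjoint; this yields exactly the distributional inequality you wrote, with a constant depending only on $n$, $p$ and $C_1$ (the $\sigma^n$ factors cancel). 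Two smaller points: the additive constant $B$ should not appear in your distributional inequality --- the computation $\int_{Q_0\cap\{f>t\}}f^p\leq 2^n t^p\sum_j|Q_j|$ followed by the per-cube estimate produces none, and a $t$-independent $B$ would make $\int_{t_0}^\infty t^{\varepsilon-1}\,dt$ diverge in the Fubini step --- and the whole proof requires $\sigma\geq 1$, consistent with the paper's surrounding discussion even though the displayed statement says $\sigma>0$. With the Vitali substitution in place of bounded overlap and $B$ deleted, the remainder of your sketch is correct.
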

We conclude this section with the following lemma \cite[Lemma C.4]{kasha-schikorra-functional}.
\begin{lemma}\label{Lemma C.4}
    Let $s\in (0,1)$ and $q,p_2\in (1,\infty)$. Let $f\in C^{\infty}_c(\R^n)$. For any $p\in (1,p_2)$, if
    $$s-\frac{n}{q}=-\frac{n}{p_2}$$
    then there is some $\Lambda>1$ so that for any ball $B_r(x_0)$ we have
    $$\Vert f-(f)_{B_r(x_0)}\Vert_{L^{p_2}(B_r(x_0))} \leq C\left(\int_{B_{\Lambda r}(x_0 )}\left(\int_{B_{\Lambda r}(x_0)}\frac{|f(x)-f(y)|^p}{|x-y|^{n+sp}}dx\right)^{\frac{q}{p}}dy\right)^{\frac{1}{q}}.$$
    The constant $C$ depends only on $s,p,p_2,q$ and the dimension.
\end{lemma}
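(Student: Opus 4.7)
The inequality is a Poincar\'e-localized fractional Sobolev-type embedding; the right-hand side is (essentially) the seminorm of the Triebel--Lizorkin space $F^s_{q,p}$, and the scaling relation $s - n/q = -n/p_2$ identifies $p_2 = nq/(n-sq)$ as the corresponding Sobolev exponent. My plan is to derive the inequality from a pointwise Riesz-potential representation of $f - (f)_{B_r(x_0)}$ combined with the classical Hardy--Littlewood--Sobolev inequality.

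First, I would fix the convenient abbreviation
$$g(y) := \left(\int_{B_{\Lambda r}(x_0)}\frac{|f(z)-f(y)|^p}{|z-y|^{n+sp}}\,dz\right)^{1/p}$$
and choose $\Lambda \geq 4$ so that $B_\rho(x) \subset B_{\Lambda r}(x_0)$ for every $x \in B_r(x_0)$ and every $\rho \leq 2r$. The first step is a local $(s,p)$-Poincar\'e-type estimate
$$\left(\barint_{B_\rho(x)}|f-(f)_{B_\rho(x)}|^p\,dz\right)^{1/p} \leq C\,\rho^{s-n/p}\|g\|_{L^p(B_\rho(x))},$$
which follows from Jensen's inequality to bound $|f(z)-\barint f|^p$ by the symmetric double average $\barint\barint|f(z)-f(w)|^p\,dw\,dz$, together with the diameter bound $|z-w|\leq 2\rho$ used to insert the Gagliardo kernel. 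The restriction $g_\rho \leq g$ on $B_\rho(x)$ (where $g_\rho$ uses integration only over $B_\rho(x)$) allows us to replace the localized seminorm by $\|g\|_{L^p(B_\rho(x))}$.

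Next, for $x \in B_r(x_0)$ I would telescope along shrinking dyadic balls $B_{\rho_k}(x)$ with $\rho_k = 2^{-k} r$. Since $f \in C^\infty_c$, $(f)_{B_{\rho_k}(x)} \to f(x)$, giving
$$|f(x) - (f)_{B_r(x_0)}| \leq \sum_{k \geq 0}\bigl|(f)_{B_{\rho_{k+1}}(x)} - (f)_{B_{\rho_k}(x)}\bigr| + \bigl|(f)_{B_r(x)} - (f)_{B_r(x_0)}\bigr|.$$
Each dyadic difference is at most $C\barint_{B_{\rho_k}(x)}|f - (f)_{B_{\rho_k}(x)}|\,dz$, which by Step~1 is bounded by $C\rho_k^{s-n/p}\|g\|_{L^p(B_{\rho_k}(x))}$, and the re-centering term is handled analogously via the intermediate ball $B_{2r}(x_0)$. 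A Hedberg-type comparison on dyadic annuli $B_{\rho_k}(x)\setminus B_{\rho_{k+1}}(x)$ (using H\"older's inequality to pass from $\|g\|_{L^p(B_{\rho_k}(x))}$ to integrals of $g$ against $|x-y|^{-(n-s)}$) then yields the pointwise bound
$$|f(x) - (f)_{B_r(x_0)}| \leq C \int_{B_{\Lambda r}(x_0)}\frac{g(y)}{|x-y|^{n-s}}\,dy.$$

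Finally, the Hardy--Littlewood--Sobolev inequality gives $\|I_s h\|_{L^{p_2}(\R^n)} \leq C\|h\|_{L^q(\R^n)}$ under exactly the relation $s - n/q = -n/p_2$; applying it to $h = g\chi_{B_{\Lambda r}(x_0)}$ and taking $L^{p_2}(B_r(x_0))$-norms produces the desired estimate. The main technical obstacle is the Hedberg-type step translating the $L^p$-based dyadic sum into a Riesz potential whose kernel matches the HLS exponent: the H\"older interpolation is transparent when $p \leq q$, but the hypothesis allows $p > q$, which may require treating the two regimes separately or invoking the Triebel--Lizorkin embedding $F^s_{q,p} \hookrightarrow L^{p_2}$ directly.
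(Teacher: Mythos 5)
The paper cites this lemma from \cite{kasha-schikorra-functional} without reproducing its proof, so I can only assess your argument on its own merits. Your overall strategy --- a pointwise Riesz-potential bound $|f(x)-(f)_{B_r(x_0)}|\lesssim I_s[g\chi_{B_{\Lambda r}(x_0)}](x)$ followed by Hardy--Littlewood--Sobolev with $1/p_2 = 1/q - s/n$ --- is sound and is indeed the natural route. However, the obstacle you flag at the end is a real gap in your Step~1/Step~2 combination, and the fix is not a case distinction on $p\lessgtr q$, nor does it require invoking a Triebel--Lizorkin embedding.

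The problem: your Step~1 gives $\barint_{B_\rho(x)}|f-(f)_{B_\rho(x)}|\lesssim \rho^{s-n/p}\|g\|_{L^p(B_\rho(x))}$, an $L^p$-average of $g$. The Hedberg step then needs the reverse of H\"older --- it needs $\|g\|_{L^p(B_{\rho_k}(x))}$ controlled by $\rho_k^{n/p'}\barint_{B_{\rho_k}(x)}g$ --- which is false in general. H\"older only tells you $\int g \le |B|^{1/p'}\|g\|_{L^p}$, the wrong direction. So one cannot pass from the $L^p$-average of $g$ to the Riesz potential $\int g(y)|x-y|^{s-n}\,dy$ by H\"older alone.

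The repair is to derive the Poincar\'e estimate asymmetrically, producing an $L^1$-average of $g$ directly. Namely, for $B_\rho(x)\subset B_{\Lambda r}(x_0)$,
\[
\barint_{B_\rho(x)}\big|f-(f)_{B_\rho(x)}\big|
\;\le\; \barint_{B_\rho(x)}\barint_{B_\rho(x)}|f(z)-f(w)|\,dz\,dw
\;\le\; (2\rho)^{\frac{n+sp}{p}}\barint_{B_\rho(x)}\left(\barint_{B_\rho(x)}\frac{|f(z)-f(w)|^p}{|z-w|^{n+sp}}\,dz\right)^{\!1/p}\!dw,
\]
where H\"older is applied only in the inner variable $z$. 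Since $\big(\barint_{B_\rho(x)}\frac{|f(z)-f(w)|^p}{|z-w|^{n+sp}}dz\big)^{1/p}\le \rho^{-n/p}g(w)$, this yields
\[
\barint_{B_\rho(x)}\big|f-(f)_{B_\rho(x)}\big|\;\lesssim\;\rho^{s}\,\barint_{B_\rho(x)}g(w)\,dw .
\]
With this $L^1$-of-$g$ Poincar\'e the dyadic telescoping gives $\sum_k\rho_k^{s-n}\int_{B_{\rho_k}(x)}g\lesssim \int_{B_{\Lambda r}(x_0)}\frac{g(w)}{|x-w|^{n-s}}\,dw$ by a straightforward Fubini/geometric-series comparison (using $n-s>0$), and the recentering term is handled the same way through $B_{2r}(x_0)$. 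No exponent mismatch arises, the distinction $p\le q$ vs.\ $p>q$ is irrelevant, and HLS then closes the argument exactly as you planned. You should also note that HLS requires $sq<n$, which is encoded in $p_2<\infty$ together with $s-n/q=-n/p_2$.
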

\subsection{Extension theorem} \label{section extension}
An underlying tool used in the proof of the \Cref{th:caccioppoli} is the $W^{s,p}$-extension into manifolds $\mathcal{N}$ with sufficiently simple topology. The general $W^{1,p}$-extension property of $W^{1-\frac{1}{p},p}$-maps had been proved by Hardt and Lin \cite[Theorem 6.2]{HardtLin87} under certain topological assumptions on the manifold $\mathcal{N}$. 
Inspired by their results, we provide a proof for the extension theorem in the class of $W^{s,p}$-maps.

An important step involved in proving the extension theorem is the following lemma \cite[Lemma 6.1]{HardtLin87}, which was several times reproved \cite{schaftgen_2014,Hopper,MMS}.
\begin{lemma}\label{lemma 6.1 lifting}
    Assume that $\mathcal{N}\subset \R^N$ is a compact smooth manifold of $\R^N$ with $\pi_0(\mathcal{N})=\pi_1(\mathcal{N})=...=\pi_{\lambda-2}(\mathcal{N})=0$, contained in a large cube $[-R,R]^N$. Then, there exists a compact $(N-\lambda)$-dimensional Lipschitz polyhedron $Y\subset [-2R,2R]^N$ and a locally Lipschitz retraction $P:[-2R,2R]^N\backslash Y\to \mathcal{N}$ such that
    \begin{itemize}
        \item[(1)] for some small $\rho>0$, the restriction $P|_{B_\rho(\mathcal{N})}$ is the nearest point projection to $\mathcal{N}$;
        \item[(2)] $|\nabla P(x)| \leq C \dist(x,Y)^{-1}$ for $x\notin Y$;
        \item[(3)] $\int_{[-2R,2R]^{N}}|\nabla P(x)|^pdx<\infty$ whenever $1<p<\lambda$.
    \end{itemize}
\end{lemma}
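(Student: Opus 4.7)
The plan is to construct $P$ by a skeleton-by-skeleton extension procedure on a fine cubical decomposition of $[-2R,2R]^N$, with $Y$ appearing as the codimension-$\lambda$ dual skeleton at which the extension is forced to be singular. First, choose $\rho>0$ so that the nearest point projection $\pi_{\mathcal{N}}$ is smooth on $B_\rho(\mathcal{N})$, and set $P=\pi_{\mathcal{N}}$ there; this immediately secures item (1). Then subdivide $[-2R,2R]^N$ into a cubical complex of side length $h\ll \rho$, and begin building $P$ on the complement of the tube by sending each vertex to an arbitrary nearby point of $\mathcal{N}$ and extending inductively through the $k$-skeleton for $k=1,\ldots,\lambda-1$. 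The obstruction to extending a given boundary map $\partial\sigma\simeq S^{k-1}\to \mathcal{N}$ across a $k$-cell $\sigma$ lies in $\pi_{k-1}(\mathcal{N})$, which vanishes for $k\leq \lambda-1$ by the topological hypothesis, so a smooth extension exists at each step.

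For cells of dimension $k\geq \lambda$ a continuous $\mathcal{N}$-valued extension need not exist, so one must allow point singularities. On each such cell $\sigma$, pick its barycenter $c_\sigma$, and for $x\in \sigma\setminus\{c_\sigma\}$ set
\[
P(x):= P\Bigl(c_\sigma + r_\sigma(x)\,\frac{x-c_\sigma}{|x-c_\sigma|}\Bigr),
\]
where $r_\sigma(x)$ is chosen so that the argument lies on $\partial\sigma$, on which $P$ has already been defined. Carrying out this radial prescription inductively from the $\lambda$-skeleton up through the $N$-skeleton produces a locally Lipschitz map off the union of the barycenters of all cells of dimension $\geq \lambda$ together with their radial ``cones'' assembled across cells of higher dimension. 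A standard combinatorial check identifies this singular locus with the codimension-$\lambda$ dual skeleton of the cubical decomposition, which is a Lipschitz polyhedron of dimension $N-\lambda$; this is the polyhedron $Y$.

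The gradient bound (2) is immediate from the radial formula: at a point $x$ in the interior of a cell where a homogeneous extension was used, $|\nabla P(x)|\lesssim |x-c_\sigma|^{-1}$, and the inductive construction guarantees that $|x-c_\sigma|\approx \dist(x,Y)$ wherever the $\sigma$-extension governs the value of $P$. On $B_\rho(\mathcal{N})$ and on cells of dimension $\leq \lambda-1$ the map is smooth and the bound is automatic. Item (3) follows by integration: since $Y$ is an $(N-\lambda)$-dimensional Lipschitz polyhedron, a standard Fubini/covering argument gives
\[
\int_{[-2R,2R]^N}\dist(x,Y)^{-p}\,dx < \infty
\]
exactly when $p<\lambda$, and the gradient estimate reduces the integrability of $|\nabla P|^p$ to precisely this computation.

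The main obstacle is the compatibility of the radial extensions across shared faces: to obtain a genuinely locally Lipschitz map off $Y$, one must define the extension first on the $\lambda$-skeleton, then on the $(\lambda+1)$-skeleton, and so on, verifying at each stage that the radial formulas on two adjacent higher-dimensional cells agree with the already-constructed extension on their common face and that the resulting singular set assembles into a \emph{Lipschitz} polyhedron rather than a merely closed set. Once this inductive bookkeeping is in place, the identification of $Y$ as the codimension-$\lambda$ dual polyhedron and the pointwise and integral gradient bounds in (2)--(3) drop out of the explicit radial formula without further work.
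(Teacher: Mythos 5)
The paper itself does not prove this lemma: it is invoked as \cite[Lemma~6.1]{HardtLin87}, with a remark that it has been reproved in \cite{schaftgen_2014,Hopper,MMS}. So there is no ``paper's proof'' to compare against; I can only assess your sketch on its own terms.

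Your outline is the standard obstruction-theoretic / homogeneous-extension construction that underlies all known proofs of this lemma, and the obstruction computation ($\pi_{k-1}(\mathcal{N})=0$ allows extension through the $k$-skeleton for $k\le\lambda-1$), the radial extension on cells of dimension $\ge\lambda$, the identification of the singular locus with the codimension-$\lambda$ dual skeleton, the pointwise bound $|\nabla P|\lesssim\dist(\cdot,Y)^{-1}$, and the computation $\int\dist(\cdot,Y)^{-p}<\infty$ for $p<\lambda$ are all correct in substance.

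There is, however, one genuine gap beyond the bookkeeping you flag. You declare ``set $P=\pi_{\mathcal{N}}$ on $B_\rho(\mathcal{N})$; this immediately secures item~(1)'' and then build a second map, by skeleton extension, on the complement of the tube. As written these are two independent constructions, and nothing forces them to agree on $\partial B_\rho(\mathcal{N})$ --- if you simply juxtapose them, $P$ has a jump across that hypersurface, and the claim that $P$ is a locally Lipschitz retraction (in particular, continuous) off $Y$ fails. You also say to send each vertex outside the tube ``to an arbitrary nearby point of $\mathcal{N}$,'' but vertices far from $\mathcal{N}$ have no nearby point of $\mathcal{N}$; you need $\pi_0(\mathcal{N})=0$ merely to choose some base point, and the subsequent skeleton extension has to be arranged consistently with $\pi_{\mathcal{N}}$ on the transition region. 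The standard fix is to run the cubical construction on \emph{all} of $[-2R,2R]^N$ rather than on the complement of the tube: with mesh $h$ small relative to $\rho$, on every closed cell contained in $B_\rho(\mathcal{N})$ one prescribes $P=\pi_{\mathcal{N}}$ (so no barycenter is introduced there and $Y$ avoids a neighborhood of $\mathcal{N}$), and on the remaining cells one performs the $\lambda-1$ skeleton extension with boundary data given by the already-prescribed values of $\pi_{\mathcal{N}}$ on faces adjacent to the tube, followed by radial extension on cells of dimension $\ge\lambda$. This yields a single coherent map, equal to $\pi_{\mathcal{N}}$ on a (possibly slightly shrunken) tube, and your remaining estimates then go through as you describe. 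Until that matching is made, item~(1) and continuity of $P$ are not established.
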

In the following lemma, the first case with $\lambda=1$ is a special case in \cite[Lemma 2.3]{schaftgen_2014} where the proof has been presented. For $\lambda>1$, we can take $Y$ as a subset of a finite union of $N-1$ dimensional planes and apply the first case to prove the desired result.  
\begin{lemma}\label{le: well defineness of Pcirc v}
    Let $\Omega\subset \R^n$ be an open bounded set, $u\in C^\infty(\overline{\Omega},\R^N)$ and let $\lambda\in \{1,\ldots,N\}$. If $Y\subset \R^N$ is a finite union of $N-\lambda$ dimensional planes, then for $\mathcal{L}^N$-almost every $a\in \R^N$, $u^{-1}(Y+a)$ is a set of $\mathcal{L}^n$-measure zero in $\Omega$.
\end{lemma}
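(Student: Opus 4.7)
The plan is to apply Fubini--Tonelli to a single incidence set in the product $\Omega \times \R^N$, rather than going through the reduction to $(N-1)$-hyperplanes suggested in the statement. First I would introduce
\[
A := \{(x,a) \in \Omega \times \R^N : u(x) \in Y+a\} = \{(x,a) : u(x) - a \in Y\}.
\]
Since $u$ is smooth and $Y$ is closed (being a finite union of affine subspaces), the set $A$ is the preimage of $Y$ under the continuous map $(x,a) \mapsto u(x) - a$, hence closed in $\Omega \times \R^N$ and in particular Lebesgue measurable.

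Next I would slice in the $x$-direction: for each fixed $x \in \Omega$, the slice is $\{a \in \R^N : u(x) - a \in Y\} = u(x) - Y$, a translate of the reflection of $Y$ through the origin. Because $Y$ is a finite union of $(N-\lambda)$-dimensional planes with $\lambda \geq 1$, each such plane has $\mathcal{L}^N$-measure zero in $\R^N$, so $\mathcal{L}^N(u(x) - Y) = 0$ for every $x \in \Omega$. Tonelli's theorem then gives
\[
\mathcal{L}^{n+N}(A) \;=\; \int_{\Omega} \mathcal{L}^N\bigl(u(x) - Y\bigr)\, dx \;=\; 0.
\]

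Finally, applying Fubini in the opposite order, for $\mathcal{L}^N$-a.e.\ $a \in \R^N$ the $a$-slice $\{x \in \Omega : (x,a) \in A\} = u^{-1}(Y+a)$ has $\mathcal{L}^n$-measure zero, which is exactly the desired conclusion.

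There is essentially no obstacle in this argument; everything reduces to careful bookkeeping with Fubini--Tonelli, and in fact smoothness of $u$ is used only to guarantee measurability of $A$ (mere Borel measurability of $u$ would suffice). As a sanity check one may instead follow the hint: cover $Y$ by a finite union $Y'$ of $(N-1)$-dimensional hyperplanes (every affine plane of dimension $N-\lambda$ with $\lambda \geq 1$ sits in some hyperplane); then $Y+a \subset Y'+a$ gives $u^{-1}(Y+a) \subset u^{-1}(Y'+a)$, which reduces the problem to the $\lambda=1$ case of \cite[Lemma 2.3]{schaftgen_2014}, itself provable by the same one-dimensional Fubini argument applied to each linear form $x \mapsto u(x) \cdot \nu_i$.
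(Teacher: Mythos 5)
Your Fubini--Tonelli argument is correct and self-contained, but it takes a genuinely different route from the paper. The paper sketches a reduction: for $\lambda=1$ (finite union of hyperplanes) it cites \cite[Lemma 2.3]{schaftgen_2014}, and for $\lambda>1$ it enlarges each $(N-\lambda)$-plane in $Y$ to an ambient $(N-1)$-dimensional hyperplane, so that $Y\subset Y'$ and $u^{-1}(Y+a)\subset u^{-1}(Y'+a)$, falling back to the hyperplane case. Your approach instead handles all $\lambda\geq 1$ at once by working in the product $\Omega\times\R^N$: you form the incidence set $A=\{(x,a):u(x)-a\in Y\}$, note that each $x$-slice $u(x)-Y$ is a translate of the Lebesgue-null set $-Y$, conclude $\mathcal{L}^{n+N}(A)=0$ by Tonelli, and then read off that the $a$-slices $u^{-1}(Y+a)$ are null for a.e.\ $a$ by Fubini. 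The one detail that must be checked is measurability of $A$, which you address correctly by observing that $Y$ is closed (a finite union of affine subspaces) and $(x,a)\mapsto u(x)-a$ is continuous. Your route is shorter and citation-free, and you rightly note that only Borel measurability of $u$ is used here, not smoothness; the paper does rely on smoothness of the $v_k$ elsewhere in the extension argument (to define $P_a\circ v_k$ pointwise), so the weaker hypothesis does not reduce assumptions in the theorem that invokes the lemma, but it does clarify what this particular lemma actually needs.
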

\begin{theorem}[Extension theorem]\label{tm:extension theorem}
    Let $1<p<\infty$ and $0<s<1$. Let $\Omega\subset \R^n$ be a bounded domain with smooth boundary and $\mathcal{N}\subset \R^N$ be a compact smooth manifold of $\R^N$ with $\pi_0(\mathcal{N})=\pi_1(\mathcal{N})=...=\pi_{\lambda-2}(\mathcal{N})=0$ where $\lambda\in \mathbb{N}$ be such that $\max\{p,2\}<\lambda \leq N$. Assume that we have a map $v\in \dot{W}^{s,p}(\R^n,\R^N)$ with $v(x)\in \mathcal{N}$ for almost every $x\in \R^n\backslash \Omega$. Then there exists a map $u\in \dot{W}^{s,p}(\R^n, \mathcal{N})$ such that $u(x)=v(x)$ for almost every $x\in \R^n\backslash \Omega$ and
    $$\int_\Omega\int_{\R^n}\frac{|u(x)-u(y)|^p}{|x-y|^{n+sp}}\;dx\;dy\;\leq C\int_\Omega\int_{\R^n}\frac{|v(x)-v(y)|^p}{|x-y|^{n+sp}}\;dx\;dy$$
    for a uniform constant $C$ depending only on $p$ and $\mathcal{N}$.
\end{theorem}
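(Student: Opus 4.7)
The plan is to adapt the Hardt--Lin retraction trick to the fractional setting. The natural candidate extension is $u := P \circ v$, with $P$ the retraction of \Cref{lemma 6.1 lifting}: this takes values in $\mathcal{N}$ and, because $P$ restricts to the identity on $\mathcal{N}$ by property~(1), agrees with $v$ a.e.\ on $\R^n\setminus\Omega$ automatically. The obstruction is the singular set $Y$ of $P$: $v^{-1}(Y)$ can have positive measure. I would remove it by introducing a family $\{P_a\}_{a\in B_\rho}$, obtained from a translate of the cubical construction underlying \Cref{lemma 6.1 lifting}, so that $P_a: [-2R,2R]^N \setminus (Y+a) \to \mathcal{N}$ is a retraction satisfying $P_a|_{\mathcal{N}} = \mathrm{id}$ and $|\nabla P_a(\xi)| \leq C\,\dist(\xi,Y+a)^{-1}$ uniformly for $|a|$ small. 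Two preliminary reductions are then needed: smooth approximation of $v$ via \Cref{th:density giovanni} (so that the pointwise segment bound and \Cref{le: well defineness of Pcirc v} apply), and composition of $v$ with the $1$-Lipschitz component-wise clamp $\phi: \R^N \to [-R,R]^N$, which fixes $\mathcal{N}$, leaves $v$ unchanged on $\R^n\setminus\Omega$, and does not increase $[v]_{W^{s,p}}$.

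With this setup, set $u_a(x) := P_a(v(x))$. By \Cref{le: well defineness of Pcirc v} applied to $Y$ and $v|_\Omega$, for a.e.\ $a\in B_\rho$ the set $v^{-1}(Y+a)\cap\Omega$ has Lebesgue measure zero, so $u_a$ is defined a.e., lies in $\mathcal{N}$, and equals $v$ outside $\Omega$. For such $a$, the chord $[v(x),v(y)]$ also misses $Y+a$ for a.e.\ $(x,y)$, since $\{a : [v(x),v(y)] \cap (Y+a) \neq \emptyset\} = [v(x),v(y)] - Y$ has dimension at most $1+(N-\lambda) < N$. Along a chord avoiding $Y+a$, the fundamental theorem of calculus and Jensen's inequality give
\[
\frac{|u_a(x)-u_a(y)|^p}{|x-y|^{n+sp}} \;\le\; \frac{|v(x)-v(y)|^p}{|x-y|^{n+sp}} \int_0^1 \bigl|\nabla P_a\bigl(v(x)+t(v(y)-v(x))\bigr)\bigr|^p\,dt.
\]
Integrating over $(x,y)\in\Omega\times\R^n$, averaging over $a\in B_\rho$, and applying Fubini, the whole estimate is controlled by
\[
\sup_{\xi\in[-R,R]^N}\,\barint_{B_\rho} |\nabla P_a(\xi)|^p\, da \;\le\; C\rho^{-N}\int_{[-2R,2R]^N}\dist(\eta,Y)^{-p}\,d\eta,
\]
after the change of variables $\eta = \xi - a$ and use of property~(2). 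The right-hand side is finite for $p < \lambda$ by a standard codimension estimate, and the hypothesis $\max\{p,2\} < \lambda$ supplies precisely this. Choosing $a$ realising the averaged bound yields the extension $u := u_a$ with constant $C = C(p,\mathcal{N})$. The general non-smooth case follows by passing to a weak limit of the smooth extensions, using \Cref{th:Rellish} to obtain pointwise a.e.\ convergence and hence that the limit is $\mathcal{N}$-valued.

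The main obstacle is the construction of the family $\{P_a\}_{a\in B_\rho}$: producing, for small $a$, a retraction whose singular set is exactly the translate $Y+a$ and which still fixes $\mathcal{N}$ pointwise requires revisiting the proof of \Cref{lemma 6.1 lifting} and verifying that translating its underlying cubic decomposition by $a$ preserves all three of its properties uniformly in $a$, in particular that $Y+a$ remains disjoint from the tubular neighbourhood of $\mathcal{N}$ on which $P_a$ coincides with the nearest-point projection. Everything else---the chord estimate, Jensen's inequality, Fubini, and the density/compactness step---is routine once this family is in hand.
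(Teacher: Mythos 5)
Your overall strategy (translate-and-average to avoid the singular set $Y$, followed by chord estimates, Fubini, and a limiting argument through smooth approximations) is essentially the paper's, but the specific mechanism you propose for the family $\{P_a\}$ contains the gap you yourself flag, and that gap is real. You insist on building, for each small $a$, a retraction $P_a$ with singular set exactly $Y+a$ and satisfying $P_a|_{\mathcal{N}}=\mathrm{id}$. The natural translate $P_a(\xi):=P(\xi-a)$ has singular set $Y+a$ and all the right integrability, but it does \emph{not} fix $\mathcal{N}$: for $x\in\mathcal{N}$ and $a\neq 0$, $P(x-a)$ is the nearest point on $\mathcal N$ to $x-a$, not $x$. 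Actually making $P_a$ the identity on $\mathcal N$ while translating only its singular set would force you to redo the cubical construction of \Cref{lemma 6.1 lifting} in a way that interpolates between a shifted skeleton far from $\mathcal N$ and the fixed nearest-point projection near $\mathcal N$, verifying uniformly in $a$ that properties (1)--(3) still hold — this is substantial extra work, and you rightly do not carry it out.

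The paper avoids this entirely: it takes the cheap translate $P_a(\xi):=P(\xi-a)$, notes that by the inverse function theorem $P_a|_{\mathcal{N}}$ is a uniformly bi-Lipschitz self-map of $\mathcal{N}$ for $|a|<r$ small, and at the end composes the weak limit $g$ of $P_{a_k}\circ v_k$ with $(P_{a_0}|_{\mathcal{N}})^{-1}$ (where $a_k\to a_0$). The boundary condition $u=v$ on $\R^n\setminus\Omega$ then follows because one checks directly, using local Lipschitz continuity of $P$ near the tubular neighbourhood, that $g=P_{a_0}\circ v$ a.e.\ outside $\Omega$, and the post-composition with the inverse undoes $P_{a_0}$ there. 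This costs only a fixed bi-Lipschitz constant in the seminorm estimate, via \eqref{eq:lipschitzbound for P_a inverse}. If you replace your unproved ``$P_a|_{\mathcal{N}}=\mathrm{id}$'' construction with this cheaper translate-plus-inverse device, your argument aligns with the paper's. One further minor omission: when you ``choose $a$ realising the averaged bound,'' you need a single $a_k$ that works simultaneously for the energy estimate on $\Omega$ and on the larger set $\Omega_0$ used for the compactness step; the paper handles this by showing each of the two good sets $\Gamma_{1,k},\Gamma_{2,k}\subset B_r$ has measure more than $\tfrac34|B_r|$, so their intersection is nonempty for every $k$.
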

In particular, the theorem above applies to the sphere $\S^{N-1} \subset \R^N$ (with $\lambda = N$) and any set diffeomorphic to it.
\begin{proof}[Proof of \Cref{tm:extension theorem}]
    Choose $R>0$ such that $\mathcal{N}\subset [-R,R]^N$. Let $P:[-2R,2R]^N\backslash Y\to \mathcal{N}$ and $\rho$ be as in \Cref{lemma 6.1 lifting}. 
    Take any arbitrary open bounded set $\Omega_0$ with smooth boundary such that $\overline{\Omega}\subset \Omega_0\subset \R^n$. By \Cref{th:density giovanni}, there exists a sequence $\{v_k\}_k\subset \dot{W}^{s,p}(\R^n,\R^N)\cap C^\infty(\R^n,\R^N )$
    such that $\Vert v_k\Vert_{L^p(\R^n,\R^N)}\leq\Vert v\Vert_{L^p(\R^n,\R^N)}$ and $[v_k- v]_{W^{s,p}(\R^n,\R^N)}\to 0$ as $k\to \infty$. Without loss of generality, we can choose the sequence such that
    \begin{equation*}
        \int_{\Omega_0}\int_{\R^n}\frac{|v_k(x)-v_k(y)|^p}{|x-y|^{n+sp}}dx\;dy\leq \int_{\Omega_0}\int_{\R^n}\frac{|v(x)-v(y)|^p}{|x-y|^{n+sp}}dx\;dy.
    \end{equation*}
    First, assume that $v$ and $v_k$ take values in $[-R,R]^N$.

    Let $\mathcal{P}:B_\rho(\mathcal{N})\to \mathcal{N}$ be the nearest point projection to the manifold $\mathcal{N}$. The restriction $\mathcal{P}|_{\mathcal{N}}:\mathcal{N}\to \mathcal{N}$ is the identity map and $\mathcal{N}$ is compact. Therefore, by the inverse function theorem we can choose $0<r<\rho$ such that
    $$\mathcal{N}\ni x \to \mathcal{P}(x-a)\in \mathcal{N}$$
    are uniformly bi-Lipschitz for all $a\in B_r$. For $a\in B_r$, we define $P_a(x):=P(x-a)$. Since $P(x)=\mathcal{P}(x)$ for $x \in B_\rho(\mathcal{N})$, there exists $\Lambda>0$ depending only on $\mathcal{N}$ such that
    \begin{equation}\label{eq:lipschitzbound for P_a inverse}
        \Lambda=\esssup_{a\in B_r(0)}\text{Lip}(P_a|_{\mathcal{N}})^{-1}<\infty.
    \end{equation}
Note that for $x,y\in [-R,R]^N$ we have
    $$|P_{a}(x)-P_{a}(y)|\leq \int_0^1|\nabla P_{a}(tx+(1-t)y)||x-y|\;dt.$$
    For each $k\in \N$, consider the composition $P_{a}\circ v_k$. Since $v_k$ is smooth on $\overline{\Omega_0}$, 
    by \Cref{le: well defineness of Pcirc v}, we get that $v_k^{-1}(Y+a)$ is a set of measure zero in $\Omega_0$ for almost every $a\in B_r(0)$. Hence, $P_a\circ v_k$ is well-defined on $\R^n$, and we have $P_{a}\circ v_k\in W^{s,p}(\Omega_0,\mathcal{N})$ for almost every $a\in B_r(0)$. Indeed,
    \begin{equation}\label{eq:w_sp bound of P_a of v}
        \begin{split}
        \int_{B_r(0)}&\int_{\Omega_0}\int_{\R^n}\frac{|(P_{a}\circ v_k)(x)-(P_{a}\circ v_k)(y)|^p}{|x-y|^{n+sp}}dx\;dy\;da\\&\leq\int_{\Omega_0}\int_{\R^n}\frac{|v_k(x)-v_k(y)|^p}{|x-y|^{n+sp}}\int_{B_r(0)}\int_0^1|\nabla P_{a}(tv_k(x)+(1-t)v_k(y))|^p\;dt\; da\;dx\;dy\\
        &\leq \int_{\Omega_0}\int_{\R^n}\frac{|v_k(x)-v_k(y)|^p}{|x-y|^{n+sp}}\int_0^1\int_{B_r(0)}|\nabla P(tv_k(x)+(1-t)v_k(y)-a)|^p\; da\;dt\;dx\;dy\\
        &\leq \int_{\Omega_0}\int_{\R^n}\frac{|v_k(x)-v_k(y)|^p}{|x-y|^{n+sp}}\int_0^1\int_{[-2R,2R]^N}|\nabla P(a)|^p\; da\;dt\;dx\;dy\\
        &\leq C \int_{\Omega_0}\int_{\R^n}\frac{|v(x)-v(y)|^p}{|x-y|^{n+sp}}dx\;dy<\infty
    \end{split}
    \end{equation}
    with $C>0$ depending only on $\mathcal{N}$. 
For $k\in \N$, we define
$$\Gamma_{1,k}:= \left\{a\in B_r(0):\text{Equation }(\ref{eq:bound for P_a comp v_k for omega 0})\text{ holds true} \right\},$$
where
\begin{equation}\label{eq:bound for P_a comp v_k for omega 0}
        \int_{\Omega_0}\int_{\R^n}\frac{|(P_{a}\circ v_k)(x)-(P_{a}\circ v_k)(y)|^p}{|x-y|^{n+sp}}dx\;dy\leq C_1\mathcal{L}^N(B_r(0))^{-1}\int_{\Omega_0}\int_{\R^n}\frac{|v(x)-v(y)|^p}{|x-y|^{n+sp}}dx\;dy
\end{equation}
for a constant $C_1>0$ chosen such that $\mathcal{L}^N(\Gamma_{1,k})>\frac{3}{4} \mathcal{L}^N(B_r(0))$ for all $k \in \N$.
\\ 
We prove this claim by method of contradiction. Assume the claim is not true. Then $\mathcal{L}^N(\Gamma_{1,k}^c)>\frac{1}{4} \mathcal{L}^N(B_r(0))$. For all  $a\in \Gamma_{1,k}^c$, we have
$$C_1\mathcal{L}^N(B_r(0))^{-1}\int_{\Omega_0}\int_{\R^n}\frac{|v(x)-v(y)|^p}{|x-y|^{n+sp}}dx\;dy<\int_{\Omega_0}\int_{\R^n}\frac{|(P_{a}\circ v_k)(x)-(P_{a}\circ v_k)(y)|^p}{|x-y|^{n+sp}}dx\;dy.$$
Integrating the above equation with respect to $a\in \Gamma_{1,k}^c$ gives
\begin{equation*}
    \begin{split}
        \frac{C_1} {4}\int_{\Omega_0}\int_{\R^n}\frac{|v(x)-v(y)|^p}{|x-y|^{n+sp}}dx\;dy\;&<\int_{\Gamma_{1,k}^c}\int_{\Omega_0}\int_{\R^n}\frac{|(P_{a}\circ v_k)(x)-(P_{a}\circ v_k)(y)|^p}{|x-y|^{n+sp}}dx\;dy\;da\\
        &\leq C\int_{\Omega_0}\int_{\R^n}\frac{|v(x)-v(y)|^p}{|x-y|^{n+sp}}dx\;dy
    \end{split}
\end{equation*}
where the last inequality follows from (\ref{eq:w_sp bound of P_a of v}). This is a contradiction if we choose $C_1$ large enough (in particular, choose $C_1=4C$).

We now define
$$\Gamma_{2,k}:= \left\{a\in B_r(0):\text{Equation }(\ref{eq:bound for P_a comp v_k for omega})\text{ holds true} \right\},$$
where
\begin{equation}\label{eq:bound for P_a comp v_k for omega}
        \int_{\Omega}\int_{\R^n}\frac{|(P_{a}\circ v_k)(x)-(P_{a}\circ v_k)(y)|^p}{|x-y|^{n+sp}}dx\;dy\leq C_1\mathcal{L}^N(B_r(0))^{-1}\int_{\Omega}\int_{\R^n}\frac{|v(x)-v(y)|^p}{|x-y|^{n+sp}}dx\;dy
\end{equation}
for some uniform constant $C_1>0$. Again, choosing $C_1$ large enough, we can ensure that $\mathcal{L}^N(\Gamma_{2,k})>\frac{3}{4} \mathcal{L}^N(B_r(0))$ for all $k \in \N$. This implies $\mathcal{L}^N(\Gamma_{1,k}\cap \Gamma_{2,k})>0$ for all $k \in \N$. Therefore, for each $k \in \N$ we can choose $a_k\in B_r(0)$ such that $(\ref{eq:bound for P_a comp v_k for omega 0})$ and $(\ref{eq:bound for P_a comp v_k for omega})$ hold simultaneously.

Note that $\{P_{a_k}\circ v_k\}_k$ is uniformly bounded in $W^{s,p}(\Omega_0,\mathcal{N})$. By weak compactness, we get, on a subsequence (denoted the same), $P_{a_k}\circ v_k\to g$ weakly in $W^{s,p}(\Omega_0,\R^N)$. By \Cref{th:Rellish}, $P_{a_k}\circ v_k\to g$ strongly in $L^{p}(\Omega_0,\R^N)$ and therefore, $P_{a_k}\circ v_k\to g$ pointwise almost everywhere in $\Omega_0$. Clearly, $g\in W^{s,p}(\Omega_0,\mathcal{N})$.

Re-labeling up to a subsequence, we may assume $a_k\to a_0 \in \overline{B_r(0)}$. Let $x\in \Omega_0\backslash\Omega$ such that $v(x)\in \mathcal{N}$ and $v_k(x)\to v(x)$. Since $|a_0|\leq r$, we get $v(x)-a_0\in B_r (\mathcal{N})$. Since $P$ is locally Lipschitz on $B_r(\mathcal{N})$, there exists $r_1>0$ such that
$$|P(z)-P(v(x)-a_0)|\leq L |z-(v(x)-a_0)|$$
for some constant $L>0$ and for all $z\in B_{r_1}(v(x)-a_0)$. Since $v_k(x)\to v(x)$ and $a_k\to a_0$, we can choose $k_0>0$ such that $v_k(x)-a_k\in B_{r_1}(v(x)-a_0)$ for all $k\geq k_0$. Therefore, for $k\geq k_0$
\begin{align*}
    |(P_{a_k}\circ v_k)(x)-(P_{a_0}\circ v)(x)|&=|P(v_k(x)-a_k)-P(v(x)-a_0)|\\
    &\leq L |(v_k(x)-a_k)-(v(x)-a_0)|\\
    &\leq L( |v_k(x)-v(x)|+|a_k-a_0|).
\end{align*}
Letting $k\to \infty$, we get $(P_{a_k}\circ v_k)(x)\to (P_{a_0}\circ v)(x)$. This happens for almost all $x\in \Omega_0\backslash \Omega$. Hence, $g=P_{a_0}\circ v$ almost everywhere on $\Omega_0\backslash \Omega$. In particular, $g(x) \in \mathcal{N}$ for almost every $x \in \Omega_0 \setminus \Omega$.
    We define $$u:= (P_{a_0}|_\mathcal{N})^{-1}\circ g.$$
    Since $v(x) \in \mathcal{N}$ for almost every $x\in \Omega_0 \setminus \Omega$, we have $u(x)=v(x)$ for almost every $x\in \Omega_0\backslash \Omega$. We can extend $u$ to $\R^n$ by defining $u:=v$ outside $\Omega_0$. Clearly, $u\in \dot{W}^{s,p}(\R^n,\mathcal{N})$.
    Letting $k\to \infty$ in (\ref{eq:bound for P_a comp v_k for omega}) and, using (\ref{eq:lipschitzbound for P_a inverse}) we get
    \begin{align*}
        \int_{\Omega}\int_{\R^n}\frac{|u(x)-u(y)|^p}{|x-y|^{n+sp}}dx\;dy&\leq \Lambda^p \int_{\Omega}\int_{\R^n}\frac{|g(x)-g(y)|^p}{|x-y|^{n+sp}}dx\;dy\\
        &\leq C' \int_{\Omega}\int_{\R^n}\frac{|v(x)-v(y)|^p}{|x-y|^{n+sp}}dx\;dy
    \end{align*}
    where $C'>0$ is a uniform constant depending only on $\mathcal{N}$ and $p$.

    The previous argument assumed that $v$ and $v_k$ take values in $[-R,R]^N$. If that is not the case, we can define $P_R:\R^N\to [-R,R]^N$ by the identity on $[-R,R]^N$ and by radial projection outside the cube. Since $P_R$ is Lipschitz with constant 1, we apply previous argument for $w:=P_R\circ v$ and for the sequence $w_k:= P_R\circ v_k$, and we get the desired result.
\end{proof}

%
\section{Caccioppoli-type Estimate}\label{section caccio}
We start by observing that minimizing harmonic maps satisfy a Caccioppoli-type estimate. We will need the following iteration lemma, which can be proved as in Lemma 3.1 from \cite[Chapter V]{Giaquinta}.

\begin{lemma}[Iteration lemma]\label{la:iteration}
 Let $0 \le a < b < \infty$ and $h \colon [a,b] \to [0,\infty)$ be a bounded function. Suppose that there are constants $\theta \in (0,1)$, $A,\,B,\,\alpha,\,\beta > 0$ such that
\begin{equation}\label{eq:iterationinequality}
 h(r) \le \theta h(R) + \frac{A}{(R-r)^\alpha}+ \frac{B}{(R-r)^\beta}
 \quad \text{for all } a \le r < R \le b.
\end{equation}
Then we obtain the bound
\[
 h(r)\le C \brac{\frac{A}{(b-r)^\alpha} + \frac{B}{(b-r)^\beta}}
 \quad \text{for all } a \le r < b
\]
with some constant $C=C(\theta,\alpha,\beta) > 0$.
\end{lemma}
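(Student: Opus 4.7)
The plan is the standard hole-filling/geometric-iteration argument. Fix $\gamma := \max(\alpha,\beta)$ and pick any $\tau \in (0,1)$ with $\theta\tau^{-\gamma} < 1$, which is possible since $\theta < 1$ (e.g., $\tau \in (\theta^{1/\gamma},1)$). Set $q := \theta\tau^{-\gamma} \in (0,1)$.

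Starting from an arbitrary $r \in [a,b)$, define the sequence
\[
 r_0 := r, \qquad r_{i+1} := r_i + (1-\tau)\tau^i (b-r), \quad i \ge 0,
\]
so that $r_i = r + (1-\tau^i)(b-r)$ is an increasing sequence in $[r,b]$ with $r_i \to b$, and the gaps satisfy $r_{i+1} - r_i = (1-\tau)\tau^i(b-r)$. Applying the hypothesis \eqref{eq:iterationinequality} with the pair $(r_i, r_{i+1})$ in place of $(r,R)$, I obtain, for every $i \ge 0$,
\[
 h(r_i) \le \theta\, h(r_{i+1}) + \frac{A}{(1-\tau)^\alpha \tau^{i\alpha}(b-r)^\alpha} + \frac{B}{(1-\tau)^\beta \tau^{i\beta}(b-r)^\beta}.
\]

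Iterating this inequality $k$ times telescopes to
\[
 h(r) \le \theta^k h(r_k) + \sum_{i=0}^{k-1} \theta^i \brac{\frac{A}{(1-\tau)^\alpha \tau^{i\alpha}(b-r)^\alpha} + \frac{B}{(1-\tau)^\beta \tau^{i\beta}(b-r)^\beta}}.
\]
Since both $\theta\tau^{-\alpha}$ and $\theta\tau^{-\beta}$ are bounded above by $q < 1$, the two geometric series converge, and since $h$ is bounded on $[a,b]$ the remainder term $\theta^k h(r_k) \to 0$ as $k \to \infty$. Passing to the limit yields
\[
 h(r) \le \frac{C(\theta,\alpha,\beta)\, A}{(b-r)^\alpha} + \frac{C(\theta,\alpha,\beta)\, B}{(b-r)^\beta},
\]
which is the desired estimate.

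The only step requiring any care is the choice of $\tau$ so that the geometric series converges; once that is fixed, everything reduces to a telescoping computation together with the (qualitative) hypothesis that $h$ is bounded, which is precisely what kills the $\theta^k h(r_k)$ remainder. No single step is a genuine obstacle.
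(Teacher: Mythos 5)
Your proof is correct and is precisely the standard geometric-iteration argument from Giaquinta's Lemma 3.1 (Chapter V), which is exactly what the paper cites for this lemma; the paper itself gives no separate proof. Your observation that choosing $\tau\in(\theta^{1/\gamma},1)$ with $\gamma=\max(\alpha,\beta)$ simultaneously makes both $\theta\tau^{-\alpha}$ and $\theta\tau^{-\beta}$ less than one is exactly the detail needed to handle the two power terms in one pass.
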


\begin{theorem}[Caccioppoli type estimate]\label{th:caccioppoli}
Let $0<s<1$ and $1< p <\infty$. Assume that $\mathcal{N}$ is as in \Cref{assumptions}. Let $sp<n$ and $u\in \dot{W}^{s,p}(\R^n,\mathcal{N})$ be a minimizing $W^{s,p}$-harmonic map in $B_1$. Then, for any $\rho>0$ such that $B_{6\rho}\subset B_1$ we have
\[
\int_{B_\rho}\int_{\R^n}\frac{|u(x) - u(y)|^p}{|x-y|^{n+sp}}\dif x \dif y \le C\rho^{-s p}\int_{B_{6\rho}}|u-(u)_{B_{6\rho}}|^p
\]
where $C=C(n,s,p,\mathcal{N})>0$ is a uniform constant.
\end{theorem}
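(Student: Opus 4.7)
The plan is to follow the classical nonlocal Caccioppoli strategy (competitor plus minimality plus cutoff estimate plus iteration), with the crucial new ingredient being the extension theorem of \Cref{section extension}, which lets us upgrade a linear competitor into a manifold-valued one at the cost of a uniform constant.

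Fix $\rho \le r < R \le 2\rho$, set $c := (u)_{B_{6\rho}}$ and $w := u - c$, and choose a cutoff $\eta \in C_c^\infty(B_R)$ with $\eta \equiv 1$ on $B_r$, $0 \le \eta \le 1$, $|\nabla \eta| \le C/(R-r)$. Define the linear competitor
\[
v := \eta\,c + (1-\eta)\,u,
\]
so $v = u$ on $\R^n \setminus B_R$ (hence $v \in \mathcal N$ there) and $v \equiv c$ on $B_r$. Apply \Cref{tm:extension theorem} on $\Omega = B_R$ (using the topological hypothesis on $\mathcal N$) to obtain $\tilde v \in \dot W^{s,p}(\R^n,\mathcal N)$ agreeing with $u$ on $B_R^c$ with the controlled bound
\[
\int_{B_R}\int_{\R^n}\frac{|\tilde v(x)-\tilde v(y)|^p}{|x-y|^{n+sp}}\,dx\,dy \le C\int_{B_R}\int_{\R^n}\frac{|v(x)-v(y)|^p}{|x-y|^{n+sp}}\,dx\,dy.
\]

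Setting $h(\rho) := \int_{B_\rho}\int_{\R^n}|u(x)-u(y)|^p|x-y|^{-n-sp}\,dx\,dy$, I would invoke the minimality $E(u) \le E(\tilde v)$. Splitting $\R^n\times \R^n$ along $B_R$ and $B_R^c$, the $(B_R^c)^2$ part cancels (since $\tilde v = u$ there); a brief rearrangement exploiting $\int_{B_R \times B_R}|u-u|^p|x-y|^{-n-sp} \le h(R)$ to absorb a stray term then gives
\[
h(R) \le C\int_{B_R}\int_{\R^n}\frac{|v(x)-v(y)|^p}{|x-y|^{n+sp}}\,dx\,dy.
\]

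Next, estimate the competitor energy using $v(x) - v(y) = (1-\eta(x))w(x) - (1-\eta(y))w(y)$ and the symmetrized product-rule bound
\[
|v(x)-v(y)|^p \le C\min(1-\eta(x),1-\eta(y))^p|u(x)-u(y)|^p + C|\eta(x)-\eta(y)|^p(|w(x)|^p+|w(y)|^p).
\]
The first term vanishes whenever $x$ or $y$ lies in $B_r$, so its integral over $B_R \times \R^n$ is at most $\int_{B_R\setminus B_r}\int_{\R^n}|u-u|^p|x-y|^{-n-sp} = h(R)-h(r)$. The second term is handled via the standard nonlocal cutoff estimate $\int_{\R^n}|\eta(x)-\eta(y)|^p|x-y|^{-n-sp}\,dy \le C(R-r)^{-sp}$ and the inclusion $B_R \subset B_{6\rho}$, giving a contribution bounded by $C(R-r)^{-sp}\int_{B_{6\rho}}|u-c|^p$; the contribution from large $|y|$ is handled by the sharper bound $\eta(x)^p \lesssim \min(1,(R-|x|)^p/(R-r)^p)$ combined with the pointwise boundedness of $u$ into the compact manifold $\mathcal N$.

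Combining all pieces yields the hole-filling inequality
\[
h(R) \le C_\star\bigl(h(R)-h(r)\bigr) + C(R-r)^{-sp}\int_{B_{6\rho}}|u-c|^p,
\]
which rearranges to $h(r) \le \theta\,h(R) + C(R-r)^{-sp}\int_{B_{6\rho}}|u-c|^p$ with $\theta := (C_\star-1)/C_\star \in (0,1)$. Applying \Cref{la:iteration} with $a = \rho$, $b = 2\rho$, $\alpha = sp$ closes the argument, giving $h(\rho) \le C\rho^{-sp}\int_{B_{6\rho}}|u-c|^p$, which is the claim.

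The main obstacle is the competitor-energy bound: specifically, controlling the portion of the $|w(y)|^p$ contribution coming from $y$ outside $B_{6\rho}$, which is not automatically controlled by the local oscillation $\int_{B_{6\rho}}|u-c|^p$. Resolving this requires the sharpened cutoff estimate above together with the compactness of $\mathcal N$; the hole-filling algebra is then the standard trick that converts the comparison inequality into a genuine Caccioppoli estimate.
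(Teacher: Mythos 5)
Your overall strategy (competitor via cutoff interpolation, extension theorem, minimality, hole-filling, iteration lemma) matches the paper's, and the bookkeeping for the $(B_R^c)^2$-cancellation, the first term of the product rule, and the $|w(y)|^p$ contribution with $y$ in the local region is correct. The gap is in the far-variable piece of the competitor energy, which you correctly identify as the "main obstacle" but then do not resolve.

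Concretely, after splitting $v(x)-v(y)$ via the product rule, the competitor energy $\int_{B_R}\int_{\R^n}|v(x)-v(y)|^p|x-y|^{-n-sp}\,dx\,dy$ produces a term in which the cutoff $\eta$ multiplies $|u(\cdot)-c|^p$ evaluated at a point of $\R^n\setminus B_R$ (equivalently, in the paper's asymmetric decomposition, a term $\int_{B_R}\int_{\R^n\setminus B_R}|1-\eta(x)|^p|u(x)-u(y)|^p|x-y|^{-n-sp}$ with the far factor $|u(x)-u(y)|$). Bounding the far factor by the diameter of $\mathcal N$ gives an estimate of the form $C(\mathcal N)\,\rho^n(R-r)^{-sp}$, which does \emph{not} scale with the local oscillation $\int_{B_{6\rho}}|u-c|^p$. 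After iteration this would only yield
\[
\int_{B_\rho}\int_{\R^n}\frac{|u(x)-u(y)|^p}{|x-y|^{n+sp}}\,dx\,dy \;\lesssim\; \rho^{-sp}\int_{B_{6\rho}}|u-(u)_{B_{6\rho}}|^p \;+\; C(\mathcal N)\,\rho^{n-sp},
\]
which is strictly weaker than the Caccioppoli estimate as stated (and in particular is vacuous in the regime where the local oscillation is small, which is exactly the regime used in the blow-up and Gehring arguments downstream). Using the sharpened bound $\eta(x)\lesssim\min\{1,(R-|x|)/(R-r)\}$ does not rescue this, since the integral of $\eta(x)^p(R-|x|)^{-sp}$ over $B_R$ still produces $\approx\rho^n(R-r)^{-sp}$.

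The paper avoids this trap by \emph{not} symmetrizing and \emph{not} appealing to boundedness. Instead, for the far-$x$ (here $x\in\R^n\setminus B_{R+2r}$) and local-$y$ (here $y\in B_r$) piece it introduces the mean over the annulus $(u)_{B_R\setminus B_r}$ as an intermediate reference, writes $|u(x)-(u)_{B_R\setminus B_r}|^p\le\barint_{B_R\setminus B_r}|u(x)-u(z)|^p\,dz$, and exploits the geometric fact that for $z\in B_R\setminus B_r$, $y\in B_r$, $x\in\R^n\setminus B_{R+2r}$ one has $|x-z|\le 2|x-y|$, so the kernel in $|x-y|$ can be replaced by the kernel in $|x-z|$. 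Together with a dyadic decomposition of $B_r$ to keep the measure ratio $\mathcal{L}^n(B_r)/\mathcal{L}^n(B_R\setminus B_r)$ under control, this converts the troublesome term into $\int_{B_R\setminus B_r}\int_{\R^n}|u(x)-u(z)|^p|x-z|^{-n-sp}\,dx\,dz = h(R)-h(r)$, which feeds into hole-filling. That annulus-average trick is what makes the estimate scale correctly; without it, the argument does not close.
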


\begin{proof}
Take any $\rho>0$ such that $B_{6\rho}\subset B_1$. Let $\rho\le r< R \le 2\rho$ and let $\eta\in C^\infty_c(B_R,[0,1])$ be a cut-off function such that $\eta\equiv 1$ on $B_r$ and $\Vert\nabla \eta\Vert_{L^\infty(\R^n)}\aleq \frac{1}{R-r}$.

We define a map $v:\R^n\to \R^N$ by $v(x) := \eta(x)(u)_{B_{6\rho}} + (1-\eta(x))u(x)$. Note that $v\in \dot{W}^{s,p}(\R^n, \R^N) $. We have $v=u$ outside of $B_R$ (and in particular, on $\R^n\backslash B_1$). Thus, by \Cref{tm:extension theorem}, there exists a $w\in \dot{W}^{s,p}(\R^n, \mathcal{N})$ such that $w=v$ outside of $B_R$ and
\begin{equation}\label{eq:firstcomparison}
 \int_{B_R}\int_{\R^n} \frac{|w(x) - w(y)|^p}{|x-y|^{n+sp}}\dif x \dif y\;\leq C_1 \int_{B_R}\int_{\R^n}\frac{|v(x) - v(y)|^p}{|x-y|^{n+sp}}\dif x \dif y
\end{equation}
where $C_1>0$ is a constant independent of $u$ and $B_R$.
 We decompose
 \[\R^n\times\R^n = \brac{(\R^n\setminus B_R)\times (\R^n\setminus B_R)} \cup \brac{(\R^n\setminus B_R)\times B_R} \cup \brac{B_R\times (\R^n\setminus B_R)} \cup \brac{B_R\times B_R}.
 \]
 By minimality of $u$ and the fact $w=u$ outside of $B_R$, we get
 \begin{equation}\label{eq:second comparison}
  \begin{split}
   \int_{B_R}\int_{B_R}& \frac{|u(x) - u(y)|^p}{|x-y|^{n+sp}}\dif x \dif y+ 2\int_{B_R}\int_{\R^n\backslash B_R} \frac{|u(x) - u(y)|^p}{|x-y|^{n+sp}}\dif x \dif y\\
   &\leq \int_{B_R}\int_{B_R} \frac{|w(x) - w(y)|^p}{|x-y|^{n+sp}}\dif x \dif y+ 2\int_{B_R}\int_{\R^n\backslash B_R} \frac{|w(x) - w(y)|^p}{|x-y|^{n+sp}}\dif x \dif y.
  \end{split}
 \end{equation}
 Using \eqref{eq:firstcomparison} and \eqref{eq:second comparison}, we find
 \begin{equation}\label{eq:comparison-simplified}
  \begin{split}
   \int_{B_r}\int_{\R^n}& \frac{|u(x) - u(y)|^p}{|x-y|^{n+sp}}\dif x \dif y\\
   &\leq  \int_{B_R}\int_{\R^n\setminus B_R}\frac{|u(x) - u(y)|^p}{|x-y|^{n+sp}}\dif x \dif y + \int_{B_R}\int_{B_R} \frac{|u(x) - u(y)|^p}{|x-y|^{n+sp}}\dif x \dif y \\
   &\leq  2\int_{B_R}\int_{\R^n\setminus B_R}\frac{|u(x) - u(y)|^p}{|x-y|^{n+sp}}\dif x \dif y + \int_{B_R}\int_{B_R} \frac{|u(x) - u(y)|^p}{|x-y|^{n+sp}}\dif x \dif y \\
   &\leq  2\int_{B_R}\int_{\R^n\setminus B_R}\frac{|w(x) - w(y)|^p}{|x-y|^{n+sp}}\dif x \dif y + \int_{B_R}\int_{B_R} \frac{|w(x) - w(y)|^p}{|x-y|^{n+sp}}\dif x \dif y \\
   &\leq2C_1 \left(\int_{B_R}\int_{\R^n\setminus B_R}\frac{|v(x) - v(y)|^p}{|x-y|^{n+sp}}\dif x \dif y + \int_{B_R}\int_{B_R} \frac{|v(x) - v(y)|^p}{|x-y|^{n+sp}}\dif x \dif y\right).
  \end{split}
 \end{equation}
We have
\begin{equation}
 v(x) - v(y) = (1-\eta(x))(u(x) - u(y)) - (\eta(x) - \eta(y))(u(y) - (u)_{B_{6\rho}}).
\end{equation}
Combining this with \eqref{eq:comparison-simplified}, we get
\begin{equation}\label{eq:v-formulaused}
 \begin{split}
 &\int_{B_r}\int_{\R^n} \frac{|u(x) - u(y)|^p}{|x-y|^{n+sp}}\dif x \dif y\\
 &\aleq_{p,\mathcal{N}} \int_{B_R}\int_{B_R}|1-\eta(x)|^p \frac{|u(x) - u(y)|^p}{|x-y|^{n+sp}} \dif x \dif y  + \int_{B_R}\int_{\R^n \setminus B_R} |1-\eta(x)|^p \frac{|u(x) - u(y)|^p}{|x-y|^{n+sp}} \dif x \dif y \\
 &\quad +\int_{B_R}\int_{\R^n} \frac{|\eta(x) - \eta(y)|^p|u(y)-(u)_{B_{6\rho}}|^p}{|x-y|^{n+sp}} \dif x \dif y.
 \end{split}
\end{equation}
We observe that for the last term of the right-hand side of \eqref{eq:v-formulaused} we have
\begin{equation}\label{eq:third-term1}
 \begin{split}
  \int_{B_R}\int_{\R^n} &\frac{|\eta(x) - \eta(y)|^p|u(y)-(u)_{B_{6\rho}}|^p}{|x-y|^{n+sp}} \dif x \dif y\\
  &\le \int_{B_R} \int_{\{x\in \R^n\colon 0\le|x-y|\le R-r\}} \frac{|\eta(x) - \eta(y)|^p|u(y)-(u)_{B_{6\rho}}|^p}{|x-y|^{n+sp}} \dif x \dif y\\
  &\quad + \int_{B_R} \int_{\{x\in \R^n\colon R-r \le|x-y|\le \infty\}} \frac{|\eta(x) - \eta(y)|^p|u(y)-(u)_{B_{6\rho}}|^p}{|x-y|^{n+sp}} \dif x \dif y.
  \end{split}
\end{equation}
For the first term on the right-hand side of (\ref{eq:third-term1}), we use $|\eta(x)-\eta(y)|\aleq \frac{|x-y|}{R-r}$ for all $x,y\in \R^n$. Then, by change of variables,
\begin{equation}\label{eq:third-term}
 \begin{split}
  &\int_{B_R}\int_{\R^n} \frac{|\eta(x) - \eta(y)|^p|u(y)-(u)_{B_{6\rho}}|^p}{|x-y|^{n+sp}} \dif x \dif y\\
  &\aleq_\eta \frac{1}{(R-r)^p} \int_{B_R}\int_{B_{R-r}}\frac{|u(y) - (u)_{B_{6\rho}}|^p}{|z|^{n+(s-1)p}}\dif z \dif y + \int_{B_{R}}|u(y) - (u)_{B_{6\rho}}|^p\int_{R-r}^{\infty} \nu^{-sp -1} \dif \nu \dif y\\
  &\aleq (R-r)^{-sp} \int_{B_{R}} |u(y) - (u)_{B_{6\rho}}|^p \dif y\\
  &\aleq (R-r)^{-sp} \int_{B_{2\rho}} |u(y) - (u)_{B_{6\rho}}|^p \dif y.
  \end{split}
\end{equation}
Since $\eta\equiv 1$ on $B_r$, we can estimate the first term of the right-hand side of \eqref{eq:v-formulaused} by
\begin{equation}\label{eq:first-term}
 \begin{split}
  \int_{B_R} \int_{B_R} |1-\eta(x)|^p \frac{|u(x) - u(y)|^p}{|x-y|^{n+sp}} \dif x \dif y
  &\le \int_{B_R} \int_{B_R\setminus B_r} \frac{|u(x) - u(y)|^p}{|x-y|^{n+sp}} \dif x \dif y\\
  &\le \int_{B_R\setminus B_r} \int_{\R^n} \frac{|u(x) - u(y)|^p}{|x-y|^{n+sp}} \dif x \dif y.
 \end{split}
\end{equation}
As $\eta\equiv 0$ on $\R^n\backslash B_R$, for the second term of the right-hand side of \eqref{eq:v-formulaused} we have
\begin{equation*}
\begin{split}
 \int_{B_R}&\int_{\R^n\setminus B_R} |1-\eta(x)|^p \frac{|u(x) - u(y)|^p}{|x-y|^{n+sp}} \dif x \dif y\\
 &=\int_{B_R}\int_{\R^n\setminus B_R} \frac{|u(x) - u(y)|^p}{|x-y|^{n+sp}} \dif x \dif y\\
 &= \int_{B_r}\int_{\R^n\setminus B_R} \frac{|u(x) - u(y)|^p}{|x-y|^{n+sp}} \dif x \dif y + \int_{B_R\setminus B_r}\int_{\R^n\setminus B_R} \frac{|u(x) - u(y)|^p}{|x-y|^{n+sp}} \dif x \dif y\\
 &= \int_{B_r}\int_{\R^n\setminus B_{R+2r}} \frac{|u(x) - u(y)|^p}{|x-y|^{n+sp}} \dif x \dif y + \int_{B_r}\int_{B_{R+2r}\setminus B_R} \frac{|u(x) - u(y)|^p}{|x-y|^{n+sp}} \dif x \dif y\\
 &\quad +\int_{B_R\setminus B_r}\int_{\R^n\backslash B_{R}}\frac{|u(x) - u(y)|^p}{|x-y|^{n+sp}} \dif x \dif y.
\end{split}
 \end{equation*}
 Thus, 
 \begin{equation}\label{eq:secondterm1}
\begin{split}
 &\int_{B_R}\int_{\R^n\setminus B_R} |1-\eta(x)|^p \frac{|u(x) - u(y)|^p}{|x-y|^{n+sp}} \dif x \dif y\\
&\aleq_p \int_{B_r}\int_{\R^n\setminus B_{R+2r}} \frac{|u(x) - (u)_{B_R\setminus B_r}|^p}{|x-y|^{n+sp}} \dif x \dif y + \int_{B_r}\int_{\R^n\setminus B_{R+2r}} \frac{|u(y) - (u)_{B_R\setminus B_r}|^p}{|x-y|^{n+sp}} \dif x \dif y\\
 &\quad +\int_{B_r}\int_{B_{R+2r}\setminus B_R} \frac{|u(x) - u(y)|^p}{|x-y|^{n+sp}} \dif x \dif y+\int_{B_R\setminus B_r}\int_{\R^n}\frac{|u(x) - u(y)|^p}{|x-y|^{n+sp}} \dif x \dif y.
\end{split}
 \end{equation}
We notice that if $z\in B_R \setminus B_r$, 
$y\in B_r$ and $x\in \R^n\setminus B_{R+2r}$, then $|x-y|\geq R+r$ and so,
\begin{align*}
   |x-z|\leq |x-y|+(R+r)\leq  2|x-y|.
\end{align*}
Thus, the first term of the right-hand side of \eqref{eq:secondterm1} can be estimated in the following way
\begin{equation*}
 \begin{split}
  \int_{B_r}\int_{\R^n\setminus B_{R+2r}}& \frac{|u(x) - (u)_{B_R\setminus B_r}|^p}{|x-y|^{n+sp}} \dif x \dif y \\
  &= \sum_{k=0}^\infty \int_{B_{r- \frac{2^k-1}{2^k}(R-r)}\setminus B_{r- \frac{2^{k+1}-1}{2^{k+1}}(R-r)}}\int_{\R^n\setminus B_{R+2r}} \frac{|u(x) - (u)_{B_R\setminus B_r}|^p}{|x-y|^{n+sp}} \dif x \dif y\\
  &\quad + \int_{B_r\setminus B_{2r-R}}\int_{\R^n\setminus B_{R+2r}} \frac{|u(x) - (u)_{B_R\setminus B_r}|^p}{|x-y|^{n+sp}} \dif x \dif y.
 \end{split}
\end{equation*}
Then, we have
\begin{equation}\label{eq:secondtermfirstterm_2a}
 \begin{split}
  &\int_{B_r}\int_{\R^n\setminus B_{R+2r}} \frac{|u(x) - (u)_{B_R\setminus B_r}|^p}{|x-y|^{n+sp}} \dif x \dif y \\
  &\aleq \sum_{k=0}^\infty\frac{1}{R^n - r^n} \int_{B_{r- \frac{2^k-1}{2^k}(R-r)}\setminus B_{r- \frac{2^{k+1}-1}{2^{k+1}}(R-r)}}\int_{\R^n\setminus B_{R+2r}} \int_{B_R\setminus B_r }\frac{|u(x) - u(z)|^p}{|x-y|^{n+sp}} \dif z \dif x \dif y\\
  &\quad + \frac{1}{R^n-r^n}\int_{B_r\setminus B_{2r-R}}\int_{\R^n\setminus B_{R+2r}}\int_{B_R\setminus B_r} \frac{|u(x) - u(z)|^p}{|x-y|^{n+sp}} \dif z \dif x \dif y\\
  &\aleq \sum_{k=0}^\infty\frac{1}{R^n - r^n} \int_{B_{r- \frac{2^k-1}{2^k}(R-r)}\setminus B_{r- \frac{2^{k+1}-1}{2^{k+1}}(R-r)}}\int_{\R^n\setminus B_{R+2r}} \int_{B_R\setminus B_r }\frac{|u(x) - u(z)|^p}{|x-z|^{n+sp}} \dif z\dif x \dif y\\
  &\quad + \frac{1}{R^n-r^n}\int_{B_r\setminus B_{2r-R}}\int_{\R^n\setminus B_{R+2r}}\int_{B_R\setminus B_r} \frac{|u(x) - u(z)|^p}{|x-z|^{n+sp}} \dif z \dif x \dif y\\
  &\aleq \sum_{k=1}^\infty\frac{(r- \frac{2^k-1}{2^k}(R-r))^n - (r- \frac{2^{k+1}-1}{2^{k+1}}(R-r))^n}{(R^n-r^n)}\int_{\R^n\setminus B_{R+2r}} \int_{B_R\setminus B_r }\frac{|u(x) - u(z)|^p}{|x-z|^{n+sp}} \dif z \dif x \\
  &\quad + \frac{r^n - (2r-R)^n}{(R^n-r^n)} \int_{\R^n\setminus B_{R+2r}} \int_{B_R\setminus B_r }\frac{|u(x) - u(z)|^p}{|x-z|^{n+sp}} \dif z \dif x.
 \end{split}
\end{equation}
Note that
\begin{align*}
    &\frac{(r- \frac{2^k-1}{2^k}(R-r))^n - (r- \frac{2^{k+1}-1}{2^{k+1}}(R-r))^n}{(R^n-r^n)}\\&= \frac{\frac{1}{2^{k+1}}(R-r)((r- \frac{2^k-1}{2^k}(R-r))^{n-1}+...+(r- \frac{2^{k+1}-1}{2^{k+1}}(R-r))^{n-1})}{(R-r)(R^{n-1}+rR^{n-2}+...+r^{n-2}R+r^{n-1})}\\
    &\leq \frac{\frac{1}{2^{k+1}} R^{n-1}}{r^{n-1}}\leq \frac{1}{2^{k+1}}\frac{(2\rho)^{n-1}}{\rho^{n-1}}\aleq \frac{1}{2^{k+1}}.
\end{align*}
Similarly
$$\frac{r^n - (2r-R)^n}{(R^n-r^n)}\aleq 1.$$
Using the above two inequalities in (\ref{eq:secondtermfirstterm_2a}) gives
\begin{equation}\label{eq:secondtermfirstterm}
 \begin{split}
  &\int_{B_r}\int_{\R^n\setminus B_{R+2r}} \frac{|u(x) - (u)_{B_R\setminus B_r}|^p}{|x-y|^{n+sp}} \dif x \dif y\\
  &\aleq \sum_{k=0}^\infty \frac{1}{2^{k+1}} \int_{\R^n\setminus B_{R+2r}} \int_{B_R\setminus B_r }\frac{|u(x) - u(z)|^p}{|x-z|^{n+sp}} \dif z \dif x + \int_{\R^n\setminus B_{R+2r}} \int_{B_R\setminus B_r }\frac{|u(x) - u(z)|^p}{|x-z|^{n+sp}} \dif z \dif x\\
  &\aleq \int_{\R^n\setminus B_{R+2r}} \int_{B_R\setminus B_r }\frac{|u(x) - u(z)|^p}{|x-z|^{n+sp}} \dif z \dif x \\
  &\leq \int_{B_R\setminus B_r }\int_{\R^n}\frac{|u(x) - u(z)|^p}{|x-z|^{n+sp}} \dif x \dif z.
 \end{split}
\end{equation}
As for the second term of the right-hand side of \eqref{eq:secondterm1} we have
\begin{equation}\label{eq:secondtermsecondterm}
\begin{split}
 \int_{B_r}&\int_{\R^n\setminus B_{R+2r}} \frac{|u(y) - (u)_{B_R\setminus B_r}|^p}{|x-y|^{n+sp}} \dif x \dif y\\
 &\aleq (R-r)^{-sp} \int_{B_r} |u(y) - (u)_{B_R\setminus B_r}|^p \dif y\\
 &\aleq (R-r)^{-sp} \brac{\int_{B_r} |u(y) - (u)_{B_{6\rho}}|^p \dif y + \int_{B_r}|(u)_{B_R\setminus B_r}-(u)_{B_{6\rho}}|^p \dif y}\\
 &\aleq (R-r)^{-sp} \int_{B_r} |u(y) - (u)_{B_{6\rho}}|^p \dif y + r^n(R-r)^{-n-sp}\int_{B_R\setminus B_r}|u(y)-(u)_{B_{6\rho}}|^p \dif y\\
 &\aleq (R-r)^{-sp} \int_{B_{2\rho}} |u(y) - (u)_{B_{6\rho}}|^p \dif y + \rho^n(R-r)^{-n-sp}\int_{B_{2\rho}}|u(y)-(u)_{B_{6\rho}}|^p \dif y.
\end{split}
 \end{equation}
 We now look at the third term of the right-hand side of \eqref{eq:secondterm1}. We have
 \begin{equation}\label{eq:thirdtermsecondterm}
\begin{split}
 \int_{B_r}&\int_{B_{R+2r}\setminus B_{R}} \frac{|u(x) - u(y)|^p}{|x-y|^{n+sp}} \dif x \dif y\\
 &\aleq_p\int_{B_r} \int_{B_{R+2r}\setminus B_{R}} \frac{|u(x) - (u)_{B_{6\rho}}|^p}{|x-y|^{n+sp}} \dif x \dif y+\int_{B_r}\int_{B_{R+2r}\setminus B_{R}} \frac{|u(y) - (u)_{B_{6\rho}}|^p}{|x-y|^{n+sp}} \dif x \dif y\\
 &\aleq (R-r)^{-sp} \brac{\int_{B_{R+2r\backslash B_R}} |u(x) - (u)_{B_{6\rho}}|^p \dif x + \int_{B_r} |u(y) - (u)_{B_{6\rho}}|^p \dif y }\\
 &\aleq (R-r)^{-sp} \int_{B_{6\rho}} |u(x) - (u)_{B_{6\rho}}|^p \dif x.
\end{split}
 \end{equation}
Therefore, combining \eqref{eq:secondterm1}, \eqref{eq:secondtermfirstterm}, \eqref{eq:secondtermsecondterm} and \eqref{eq:thirdtermsecondterm}, we obtain an estimate for the second term of the right-hand side of \eqref{eq:v-formulaused}
\begin{equation}\label{eq:secondterm}
\begin{split}
  \int_{B_R}\int_{\R^n\setminus B_R} &|1-\eta(x)|^p \frac{|u(x) - u(y)|^p}{|x-y|^{n+sp}} \dif x \dif y\\
  &\aleq \int_{B_R\setminus B_r }\int_{\R^n}\frac{|u(x) - u(z)|^p}{|x-z|^{n+sp}} \dif x \dif z +(R-r)^{-sp} \int_{B_{6\rho}} |u(y) - (u)_{B_{6\rho}}|^p \dif y\\
  &\quad + \rho^n(R-r)^{-n-sp}\int_{B_{6\rho}}|u(y)-(u)_{B_{6\rho}}|^p \dif y.
\end{split}
\end{equation}
Plugging in \eqref{eq:third-term}, \eqref{eq:first-term}, and \eqref{eq:secondterm} into \eqref{eq:v-formulaused}, we get for a constant $C>0$ independent of $u,R,r,$ and $ \rho$
\begin{equation}
 \begin{split}
\int_{B_r}\int_{\R^n}& \frac{|u(x) - u(y)|^p}{|x-y|^{n+sp}}\dif x \dif y\\
&\le C\int_{B_R\setminus B_r }\int_{\R^n} \frac{|u(x) - u(y)|^p}{|x-y|^{n+sp}} \dif x \dif y + C(R-r)^{-sp} \int_{B_{6\rho}} |u(y) - (u)_{B_{6\rho}}|^p \dif y\\
&\quad + C\rho^n(R-r)^{-n-sp}\int_{B_{6\rho}}|u(y)-(u)_{B_{6\rho}}|^p \dif y.
 \end{split}
\end{equation}
Thus, by the hole-filling trick we get for $0<\theta = \frac{C}{C+1}<1$ and for all $\rho\leq r<R\leq 2\rho$
\begin{equation}
\begin{split}
 \int_{B_r}\int_{\R^n}& \frac{|u(x) - u(y)|^p}{|x-y|^{n+sp}}\dif x \dif y\\
&\le \theta \int_{B_R}\int_{\R^n}\frac{|u(x) - u(y)|^p}{|x-y|^{n+sp}} \dif x \dif y + C(R-r)^{-sp} \int_{B_{6\rho}} |u(y) - (u)_{B_{6\rho}}|^p \dif y\\
&\quad + C\rho^n(R-r)^{-n-sp}\int_{B_{6\rho}}|u(y)-(u)_{B_{6\rho}}|^p \dif y.
\end{split}
\end{equation}
Applying the iteration lemma, \Cref{la:iteration}, we get for all $\rho\le r\le2\rho$
\begin{equation}
\begin{split}
\int_{B_r}\int_{\R^n}& \frac{|u(x) - u(y)|^p}{|x-y|^{n+sp}}\dif x \dif y\\
&\aleq (2\rho - r)^{-sp} \int_{B_{6\rho}} |u(y) - (u)_{B_{6\rho}}|^p \dif y + \rho^n(2\rho-r)^{-n-sp}\int_{B_{6\rho}}|u(y)-(u)_{B_{6\rho}}|^p \dif y.
\end{split}
\end{equation}
Taking $r=\rho$ we obtain
\begin{equation}
\int_{B_\rho}\int_{\R^n} \frac{|u(x) - u(y)|^p}{|x-y|^{n+sp}}\dif x \dif y \aleq \rho^{-sp} \int_{B_{6\rho}} |u(y) - (u)_{B_{6\rho}}|^p \dif y.
\end{equation}
\end{proof}
\section{Decay Estimate}\label{section decay}
The key step in proving the H\"older continuity of the minimizers is the decay estimate. The minimizer satisfy a decay in energy around a point where the minimizer has a small normalized energy. This can be used to prove the H\"older continuity around that point.
\begin{proposition}[Decay estimate]\label{th:decay}
Assume $1<p_1 < p_2 <\infty$ and  $0<s_1 < s_2<1$. There exists $\delta = \delta(s_1,s_2,p_1,p_2,n,N)> 0$ such that the following holds for any $s \in (s_1,s_2)$ and $p \in (p_1,p_2)$ such that $n-\delta<sp<n$:\\
Let $\mathcal{N}$ be as in \Cref{assumptions}. For any $\theta\in \left(0,\frac{1}{2}\right)$, there exist $\varepsilon >0$ such that if $u\in \dot{W}^{s,p}(\R^n,\mathcal{N})$ is a minimizing $W^{s,p}$-harmonic map in $B_1$ with
\begin{equation}\label{blow up 1}
    \int_{B_1}\int_{\R^n}\frac{|u(x)-u(y)|^p}{|x-y|^{n+sp}}dx\;dy\;<\varepsilon^p,
\end{equation}
then
\begin{equation}\label{eq:blowup-smallness}
    \theta^{sp-n}\int_{B_\theta}\int_{\R^n}\frac{|u(x)-u(y)|^p}{|x-y|^{n+sp}}dx\;dy\;\leq \frac{1}{2}\int_{B_1}\int_{\R^n}\frac{|u(x)-u(y)|^p}{|x-y|^{n+sp}}dx\;dy.
\end{equation}
\end{proposition}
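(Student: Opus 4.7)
The approach is a blow-up argument by contradiction. Suppose the decay estimate fails for some fixed $\theta \in (0,1/2)$: then there exist $s_i \in (s_1, s_2)$, $p_i \in (p_1, p_2)$ with $n - \delta < s_i p_i < n$, and minimizing $W^{s_i, p_i}$-harmonic maps $u_i \in \dot W^{s_i, p_i}(\R^n, \mathcal{N})$ in $B_1$ such that
$$\varepsilon_i^{p_i} := \int_{B_1} \int_{\R^n} \frac{|u_i(x) - u_i(y)|^{p_i}}{|x-y|^{n + s_i p_i}}\,dx\,dy \to 0,$$
while (\ref{eq:blowup-smallness}) is violated. Set $a_i := (u_i)_{B_1}$ and $v_i := \varepsilon_i^{-1}(u_i - a_i)$; after normalization,
$$\int_{B_1} \int_{\R^n} \frac{|v_i(x) - v_i(y)|^{p_i}}{|x-y|^{n + s_i p_i}}\,dx\,dy = 1, \qquad \theta^{s_i p_i - n} \int_{B_\theta} \int_{\R^n} \frac{|v_i(x) - v_i(y)|^{p_i}}{|x-y|^{n + s_i p_i}}\,dx\,dy > \tfrac{1}{2}.$$

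Extract subsequences $s_i \to s_\infty \in [s_1, s_2]$, $p_i \to p_\infty \in [p_1, p_2]$, and $a_i \to a_\infty \in \mathcal{N}$ (the latter using compactness of $\mathcal{N}$ together with $\|u_i - a_i\|_{L^{p_i}(B_1)} \to 0$, which comes from a fractional Poincar\'e inequality). Combining \Cref{Lemma C.4} with the Rellich--Kondrachov theorem \Cref{th:Rellish} and a diagonal argument, obtain $v_i \to v$ strongly in $L^{p_\infty}_{\text{loc}}(B_1, \R^N)$ and pointwise a.e., to a limit $v \in \dot W^{s_\infty, p_\infty}(B_1, \R^N)$ with $(v)_{B_1} = 0$. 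Since $a_i + \varepsilon_i v_i \in \mathcal{N}$ a.e.\ and $\varepsilon_i \to 0$, expanding the manifold constraint at $a_\infty$ forces $v(x) \in T_{a_\infty}\mathcal{N}$ a.e.

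To identify the equation satisfied by $v$, I test the minimality of $u_i$ against the admissible competitor $\widetilde u_i := \Pi_\mathcal{N}(u_i + \varepsilon_i \varphi)$ for $\varphi \in C_c^\infty(B_1, T_{a_\infty}\mathcal{N})$, where $\Pi_\mathcal{N}$ denotes the nearest-point projection onto $\mathcal{N}$ in a tubular neighborhood of $a_\infty$ (well-defined for $i$ large since $u_i \to a_\infty$ in measure). Expanding the minimality inequality to leading order in $\varepsilon_i$, dividing by $\varepsilon_i^{p_i}$, and passing to the limit $i \to \infty$ yields
$$\int_{\R^n} \int_{\R^n} \frac{|v(x) - v(y)|^{p_\infty - 2}(v(x) - v(y)) \cdot (\varphi(x) - \varphi(y))}{|x-y|^{n + s_\infty p_\infty}}\,dx\,dy = 0,$$
i.e., $(-\Delta_{p_\infty})^{s_\infty} v = 0$ weakly in $B_1$. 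Since $n - \delta < s_\infty p_\infty < n$, \Cref{th:alternate for BLS} (with $f \equiv 0$) then gives $v \in C^{0,\alpha}_{\text{loc}}(B_1, \R^N)$ for $\alpha := (s_\infty p_\infty + \delta - n)/p_\infty > 0$ together with a uniform H\"older bound $[v]_{C^{0,\alpha}(B_{1/2})} \le C(n, s_1, s_2, p_1, p_2, \delta)$.

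Apply the Caccioppoli-type inequality \Cref{th:caccioppoli} to $u_i$ on $B_\theta$ (assuming $\theta \le 1/6$ without loss of generality, since the Campanato iteration in \Cref{section proof} only needs arbitrarily small $\theta$); normalizing by $\varepsilon_i^{p_i}$ gives
$$\theta^{s_i p_i - n} \int_{B_\theta} \int_{\R^n} \frac{|v_i(x) - v_i(y)|^{p_i}}{|x-y|^{n + s_i p_i}}\,dx\,dy \le C \theta^{-n} \int_{B_{6\theta}} |v_i - (v_i)_{B_{6\theta}}|^{p_i}\,dx.$$
Letting $i \to \infty$ via strong $L^{p_\infty}$-convergence and then invoking the H\"older bound on $v$, the right-hand side is dominated by $C \theta^{\alpha p_\infty} [v]_{C^{0,\alpha}(B_{1/2})}^{p_\infty}$, which is strictly less than $1/2$ for $\theta$ small enough (depending only on $n, s_1, s_2, p_1, p_2, \delta$), contradicting the lower bound $1/2$ from the first paragraph. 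The principal obstacle is the rigorous derivation of the limiting Euler--Lagrange equation: one must construct tangent-valued admissible variations via $\Pi_\mathcal{N}$, control the curvature corrections contributed by $\Pi_\mathcal{N}$ (the order of these terms is $O(\varepsilon_i^{2})$ in $\varphi$, requiring separate treatment for $p_i < 2$ and $p_i \ge 2$ after dividing by $\varepsilon_i^{p_i}$), and pass to the limit in the nonlinear kernel $|v_i(x) - v_i(y)|^{p_i - 2}(v_i(x) - v_i(y))$ while the exponent $p_i$ itself varies with $i$. Uniform tail control from the normalization of the fractional energy to $1$, combined with equi-integrability of the Gagliardo quotients, is what makes this passage feasible.
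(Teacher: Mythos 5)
Your blow-up structure is the same as the paper's: normalize $v_i=\varepsilon_i^{-1}(u_i-(u_i)_{B_1})$, extract a limit $v$, use fractional $p$-Laplace regularity on $v$, then apply Caccioppoli plus the H\"older bound to contradict the failure of decay for small $\theta$. However, there are two genuine gaps around the limiting equation.

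First, the claim that $v$ is a weak solution of $(-\Delta_{p_\infty})^{s_\infty}v=0$ does not hold. You only have convergence $v_i\to v$ inside $B_1$, and the full Euler--Lagrange pairing $\int_{\R^n}\int_{\R^n}\frac{|v_i(x)-v_i(y)|^{p-2}(v_i(x)-v_i(y))(\varphi(x)-\varphi(y))}{|x-y|^{n+sp}}$ splits into an interior piece over $B_1\times B_1$ and a tail piece over $B_1\times(\R^n\setminus B_1)$; the tail piece is bounded in $L^{p'}$ by the normalization $\int_{B_1}\int_{\R^n}\frac{|v_i(x)-v_i(y)|^p}{|x-y|^{n+sp}}=1$, but its weak limit $f$ is not zero in general (there is no reason $v_i$ should vanish or converge outside $B_1$, since $v_i=\varepsilon_i^{-1}(u_i-(u_i)_{B_1})$ can be of size $\varepsilon_i^{-1}$ there). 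The paper carefully tracks this and obtains $(-\Delta_p)^s\tilde v=h$ with $h\in L^{p'}$ generally nonzero, and then invokes \Cref{th:alternate for BLS} \emph{with} a right-hand side. The conclusion (H\"older continuity of $v$) survives, so this is fixable, but the equation as you wrote it is false and the derivation as you sketch it does not go through.

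Second, even to formulate the equation and apply \Cref{th:alternate for BLS} you need a map defined on all of $\R^n$ lying in the tail class $L^{p-1}_{sp}(\R^n)$, whereas you only have $v$ on $B_1$. The paper resolves this by working with $\tilde v=\eta v$ for a cutoff $\eta$ supported in $B_{3/4}$, checking that $\tilde v\in W^{s,p}(B_1)\cap L^{p-1}_{sp}(\R^n)$ and that $(-\Delta_p)^s\tilde v=h$ in $B_{1/4}$; the cutoff contributes more lower-order terms to $h$. Your write-up skips this localization, so the invocation of \Cref{th:alternate for BLS} is not justified as stated. A minor additional comment: letting $s_i,p_i$ vary along the sequence is unnecessary (the $\varepsilon$ in the proposition is allowed to depend on $s,p$ since they are quantified before it), and fixing $s,p$ as the paper does removes the extra headache of passing to a limit in a kernel with varying exponent.
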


\begin{proof}
    Assume the claim is false. Then for each $0<\theta<\frac{1}{2}$, there exist a sequence $\{\varepsilon_i\}_{i=1}^\infty \subset (0,1)$ such that $\varepsilon_i\to 0$ as $i\to \infty$, and a sequence $\{u_i\}_{i=1}^\infty$ of $W^{s,p}$-minimizing harmonic maps such that
    \begin{equation}\label{eq:Wsp norm of u_i}
        \int_{B_1}\int_{\R^n}\frac{|u_i(x)-u_i(y)|^p}{|x-y|^{n+sp}}dx\;dy\;=\varepsilon_i^p
    \end{equation}
    and
    \begin{equation}\label{eq:blow up 3}
    \theta^{sp-n}\int_{B_\theta}\int_{\R^n}\frac{|u_i(x)-u_i(y)|^p}{|x-y|^{n+sp}}dx\;dy\;> \frac{1}{2}\varepsilon_i^p.
\end{equation}
We define the normalized maps
$$v_i:=\frac{u_i-(u_i)_{B_1}}{\varepsilon_i}.$$
We have $(v_i)_{B_1}=0$ and
\begin{equation}\label{eq:bound for v}
    \int_{B_1}\int_{\R^n}\frac{|v_i(x)-v_i(y)|^p}{|x-y|^{n+sp}}dx\;dy\;=\frac{1}{\varepsilon_i^p}\int_{B_1}\int_{\R^n}\frac{|u_i(x)-u_i(y)|^p}{|x-y|^{n+sp}}dx\;dy=1.
\end{equation}
Also, by Jensen's inequality and (\ref{eq:Wsp norm of u_i})
\begin{align*}
    \int_{B_1}|v_{i}(x)|^p\;dx&= \frac{1}{\varepsilon_i^p}\int_{B_1}|u_i(x)-(u)_{B_1}|^p\;dx\\
    &\leq \frac{1}{\varepsilon_i^p}\int_{B_1}\barint_{B_1}|u_i(x)-u_i(y)|^p\;dx\;dy\\
&\leq C(n,s,p) \frac{1}{\varepsilon_i^p}\int_{B_1}\barint_{B_1}\frac{|u_i(x)-u_i(y)|^p}{|x-y|^{n+sp}}\;dx\;dy\\
    &\leq C(n,s,p).
\end{align*}
In particular,
\begin{equation*} 
    \Vert v_i\Vert_{W^{s,p}(B_1,\R^N)}\leq C(n,s,p).
\end{equation*}
By weak compactness and \Cref{th:Rellish} (by re-labeling up to a subsequence), we may assume 
$$v_i\to v\;\;\;\text{weakly in }W^{s,p}(B_1,\R^N),$$
$$v_i\to v\;\;\;\text{strongly in }L^p_{loc}(B_1,\R^N),$$
$$v_i\to v\;\;\;\text{p.w. a.e. in }B_1,\;\;\;\;\;$$
where $v\in W^{s,p}(B_1,\R^N)$ and
\begin{equation}\label{eq:Wsp bound for v}
    \Vert v\Vert_{W^{s,p}(B_1,\R^N)}\leq C(n,s,p).
\end{equation}
Let $\pi:B_\rho(\mathcal{N})\to \mathcal{N}$ be the nearest point projection. For $u\in \mathcal{N}$, let $\Pi(u)$ denote the orthogonal projection onto the tangent space $T_{u}\mathcal{N}$ and $\Pi^{\perp}(u)$ denotes $I-\Pi(u)$. Note that $\Pi(u)$ is a symmetric matrix given by
$$\pi_{kj}(u)=\partial_k \pi^j(\pi(u)), \;\;\;1\leq k,j\leq N$$
(see \cite[Lemma A.1]{kasha-schikorra-functional}). Take any $\varphi\in C^\infty_c(B_{1/2},\R^N)$, for $t>0$ small enough $u_i+t\varphi$ take values in $B_\rho(\mathcal{N})$. Since $u_i$ is the minimizer, we have
$$\frac{d}{dt}\Big|_{t=0}\int_{\R^n}\int_{\R^n}\frac{|(\pi\circ (u_i + t\varphi))(x)-(\pi\circ (u_i + t\varphi))(y)|^p}{|x-y|^{n+sp}}dx\;dy \;=0.$$
This gives
$$\int_{\R^n}\int_{\R^n}\frac{|u_i(x)-u_i(y)|^{p-2}(u_i(x)-u_i(y))(D\pi(u_i(x))\varphi(x)-D\pi (u_i(y))\varphi(y))}{|x-y|^{n+sp}}dx\;dy \;=0.$$
We rewrite this as
\begin{align*}
    \int_{\R^n}\int_{\R^n}&\frac{|u_i(x)-u_i(y)|^{p-2}(u_i(x)-u_i(y))(\varphi(x)-\varphi(y))}{|x-y|^{n+sp}}dx\;dy\\
     =& \int_{\R^n}\int_{\R^n}\frac{|u_i(x)-u_i(y)|^{p-2}(u_i(x)-u_i(y))(\Pi^\perp (u_i(x))\varphi(x)-\Pi^\perp (u_i(y))\varphi(y))}{|x-y|^{n+sp}}dx\;dy\\
    = &\int_{\R^n}\int_{\R^n}\frac{|u_i(x)-u_i(y)|^{p-2}(u_i(x)-u_i(y))(\Pi^\perp (u_i(x))-\Pi^\perp (u_i(y)))\varphi(x)}{|x-y|^{n+sp}}dx\;dy\\
    &+ \int_{\R^n}\int_{\R^n}\frac{|u_i(x)-u_i(y)|^{p-2}(u_i(x)-u_i(y))\Pi^\perp (u_i(y))(\varphi(x)-\varphi(y))}{|x-y|^{n+sp}}dx\;dy.
\end{align*}
Now, we estimate the two integrals of the above equation separately. Firstly, since $\Pi^\perp$ is uniformly Lipschitz on $\mathcal{N}$ we have
\begin{align*}
    \int_{\R^n}\int_{\R^n}&\frac{|u_i(x)-u_i(y)|^{p-2}(u_i(x)-u_i(y))(\Pi^\perp (u_i(x))-\Pi^\perp (u_i(y)))\varphi(x)}{|x-y|^{n+sp}}dx\;dy\\
    &\leq C(\mathcal{N})\int_{\R^n}\int_{B_{1/2}} \frac{|u_i(x)-u_i(y)|^p|\varphi(x)|}{|x-y|^{n+sp}}\;dx\;dy.
\end{align*}
For the second term, since $u_i(x),u_i(y)\in \mathcal{N}$ we have $u_i(x)=\pi(u_i(x)),\;u_i(y)=\pi(u_i(y))$. Then, by the Taylor expansion of $\pi$ \cite[Lemma A.1]{kasha-schikorra-functional},
\begin{equation*}
\begin{split}
    |(u_i(x)-u_i(y))-\Pi(u_i(y))&(u_i(x)-u_i(y))|\\&=|(\pi(u_i(x))-\pi(u_i(y)))-\Pi(u_i(y))(u_i(x)-u_i(y))|\\
    &\leq C(\mathcal{N}) |u_i(x)-u_i(y)|^2.
\end{split}
\end{equation*}
Thus,
\begin{align*}
    \bigg|\int_{\R^n}\int_{\R^n}&\frac{|u_i(x)-u_i(y)|^{p-2}(u_i(x)-u_i(y))\Pi^\perp (u_i(y))(\varphi(x)-\varphi(y))}{|x-y|^{n+sp}}dx\;dy \;\bigg|\\
    &\leq C(\mathcal{N})\int_{\R^n}\int_{B_{1/2}} \frac{|u_i(x)-u_i(y)|^p|\varphi(x)|}{|x-y|^{n+sp}}dx\;dy.
\end{align*}
Combining the above two bounds and rewriting $u_i$ in terms of $v_i$ give
\begin{equation}\label{eq:EL for v_i}
\begin{split}
    \bigg|\int_{\R^n}\int_{\R^n}&\frac{|v_i(x)-v_i(y)|^{p-2}(v_i(x)-v_i(y))(\varphi(x)-\varphi(y))}{|x-y|^{n+sp}}dx\;dy\;\bigg| \\&\hspace{1.2in}\leq C(\varphi,\mathcal{N}) \;\varepsilon_i\int_{\R^n}\int_{B_1} \frac{|v_i(x)-v_i(y)|^p}{|x-y|^{n+sp}}dx\;dy
    \\&\hspace{1.2in}= C(\varphi,\mathcal{N}) \;\varepsilon_i
\end{split}
\end{equation}
where the last equality follows from (\ref{eq:bound for v}).
Thus, we get
that the right hand side of (\ref{eq:EL for v_i}) converges to $0$ as $i\to \infty$.
Since supp$(\varphi)\subset B_{1/2}$, the left hand side of (\ref{eq:EL for v_i}) can be written as
\begin{align*}
    \int_{\R^n}\int_{\R^n}&\frac{|v_i(x)-v_i(y)|^{p-2}(v_i(x)-v_i(y))(\varphi(x)-\varphi(y))}{|x-y|^{n+sp}}dx\;dy\\
    =&\int_{B_1}\int_{B_1}\frac{|v_i(x)-v_i(y)|^{p-2}(v_i(x)-v_i(y))(\varphi(x)-\varphi(y))}{|x-y|^{n+sp}}dx\;dy\\
    &+2\int_{\R^n\backslash B_1}\int_{B_1}\frac{|v_i(x)-v_i(y)|^{p-2}(v_i(x)-v_i(y))(\varphi(x)-\varphi(y))}{|x-y|^{n+sp}}dx\;dy\\
    =:&\; \mathcal{A}_i+\mathcal{B}_i.
\end{align*}
We define
$$V_i(x,y):=\frac{|v_i(x)-v_i(y)|^{p-2}(v_i(x)-v_i(y))}{|x-y|^{s(p-1)}}.$$
Note that
\begin{align*}
    \int_{B_1}\int_{B_1}|V_i(x,y)|^{p'}\;\frac{dx\;dy}{|x-y|^{n}}&=\int_{B_1} \int_{B_1}\left|\frac{|v_i(x)-v_i(y)|^{p-2}(v_i(x)-v_i(y))}{|x-y|^{s(p-1)}}\right|^{p'}\frac{dx\;dy}{|x-y|^{n}}\\&\leq\int_{B_1}\int_{B_1}\frac{|v_i(x)-v_i(y)|^{p}}{|x-y|^{n+sp}}dx\;dy\leq C(n,s,p)
\end{align*}
where $\frac{1}{p}+\frac{1}{p'}=1$. Up to a further subsequence, by weak compactness, $V_i$ converges weakly in $L^{p'}(B_1\times B_1)$ with respect to the measure $\frac{dx\times dy}{|x-y|^n}$. But we know $V_i\to V$ pointwise a.e. in $B_1$ where $$V(x,y):=\frac{|v(x)-v(y)|^{p-2}(v(x)-v(y))}{|x-y|^{s(p-1)}}.$$ Thus,
$$\mathcal{A}_i\to \int_{B_1}\int_{B_1}\frac{|v(x)-v(y)|^{p-2}(v(x)-v(y))(\varphi(x)-\varphi(y))}{|x-y|^{n+sp}}dx\;dy.$$
Now, we have
$$\mathcal{B}_i= 2\int_{B_{1/2}}\int_{\R^n\backslash B_1}\frac{|v_i(x)-v_i(y)|^{p-2}(v_i(x)-v_i(y))\varphi(x)}{|x-y|^{n+sp}}dy\;dx.$$
Let $$f_i:= \chi_{B_{1/2}}\int_{\R^n\backslash B_1}\frac{|v_i(x)-v_i(y)|^{p-2}(v_i(x)-v_i(y))}{|x-y|^{n+sp}}dy.$$
Let $p'=\frac{p}{p-1}$. For $x\in B_{1/2}$,
\begin{align*}
    \mu_x(\R^n\backslash B_1)&:= \int_{\R^N\backslash B_1}\frac{1}{|x-y|^{n+sp}}dy\\
    &\leq C(n,s,p)\int_{1/2}^\infty \frac{\nu^{n-1}}{\nu^{n+sp}}d\nu\leq C(n,s,p).
\end{align*}
So, we can apply Jensen's inequality and equation (\ref{eq:bound for v}) to get
\begin{align*}
    \Vert f_i\Vert_{L^{p'}(\R^n,\R^N)}^{p'}&=\int_{B_{1/2}}\left|\int_{\R^n\backslash B_1}\frac{|v_i(x)-v_i(y)|^{p-2}(v_i(x)-v_i(y))}{|x-y|^{n+sp}} dy\right|^{p'} dx\\
    &=\int_{B_{1/2}}\mu_x(\R^n\backslash B_1)^{p'}\left|\barint_{\R^n\backslash B_1}|v_i(x)-v_i(y)|^{p-2}(v_i(x)-v_i(y))\; d\mu_x(y)\right|^{p'} dx\\
&\leq C(n,s,p)\int_{B_{1/2}}\int_{\R^n\backslash B_1}\frac{|v_i(x)-v_i(y)|^{p}}{|x-y|^{n+sp}} dy\; dx\leq C(n,s,p).
\end{align*}
\noindent Thus, relabeling up to a subsequence we get $f_i\to f$ in $L^{p'}(\R^n,\R^N)$, as $i\to \infty$, for some $f\in L^{p'}(\R^n,\R^N)$ with $\Vert f\Vert_{L^{p'}(\R^n,\R^N)}\leq C(n,s,p)$.
Also, we have
$$\lim_{i\to \infty}\int_{B_{1/2}}\int_{\R^n\backslash B_1}\frac{|v_i(x)-v_i(y)|^{p-2}(v_i(x)-v_i(y))\varphi(x)}{|x-y|^{n+sp}}dy\;dx=\int_{\R^n}f\;\varphi.$$
Letting $i\to \infty$ in (\ref{eq:EL for v_i}) gives
\begin{equation}\label{eq:fraclapalcian in B1 for v}
    \int_{B_1}\int_{B_1}\frac{|v(x)-v(y)|^{p-2}(v(x)-v(y))(\varphi(x)-\varphi(y))}{|x-y|^{n+sp}}dx\;dy=-2\int_{\R^n} f\;\varphi
\end{equation}
for any $\varphi\in C^\infty_{c}(B_{1/2},\R^N)$. \\
Take a smooth cut-ff function $\eta\geq 0$ such that $\eta\equiv 1$ on $B_{1/2}$ and $\eta\equiv 0$ outside $B_{3/4}$. Define $\tilde{v}:=\eta v$. Let $$T(t):=|t|^{p-2}t$$ for $t\in \R^N$. For $t\neq 0$, we get $$(D T_j)_i=(p-2)|t|^{p-4}t_it_j+|t|^{p-4}\delta_{ij}.$$
$$|DT(t)|\leq C(p-1)|t|^{p-2}$$
since $p\geq 2$. Therefore, for $x,y\in B_1$,
\begin{align*}
    &|T(\eta(x)v(x)-\eta(y)v(y))-T(\eta(x)(v(x)-v(y)))|\\
    &\leq \int_{0}^1 |DT(\eta(x)(v(x)-v(y))+s\;v(y)(\eta(x)-\eta(y)))|\;|v(y)(\eta(x)-\eta(y))|\;ds\\
    &\aleq \left(\eta(x)^{p-2}|v(x)-v(y)|^{p-2}+ |v(y)|^{p-2}|\eta(x)-\eta(y)|^{p-2}\right)|v(y)||\eta(x)-\eta(y)|.
\end{align*}
Thus, for any $\varphi\in C_c^\infty(B_{1/4},\R^N)$, we have
\begin{align*}
    &(T(\eta(x)v(x)-\eta(y)v(y))- T(\eta(x)(v(x)-v(y))))(\varphi(x)-\varphi(y))\\
    &\aleq \left(\eta(x)^{p-2}|v(x)-v(y)|^{p-2}+ |v(y)|^{p-2}|\eta(x)-\eta(y)|^{p-2}\right)|v(y)||\eta(x)-\eta(y)||\varphi(x)-\varphi(y)|.
\end{align*}
So,
\begin{align*}
    \RN{1}&:=\int_{B_1}\int_{B_1} \frac{|\tilde{v}(x)-\tilde{v}(y)|^{p-2}(\tilde{v}(x)-\tilde{v}(y))(\varphi(x)-\varphi(y))}{|x-y|^{n+sp}}dx \;dy\\
    &=\int_{B_1}\int_{B_1} \frac{|\eta(x)v(x)-\eta(y)v(y)|^{p-2} (\eta(x)v(x)-\eta(y)v(y))(\varphi(x)-\varphi(y))}{|x-y|^{n+sp}}dx\;dy\\
    &= \int_{B_1}\int_{B_1}\frac{T(\eta(x)v(x)-\eta(y)v(y))(\varphi(x)-\varphi(y))}{|x-y|^{n+sp}}dx\;dy\\
    &\aleq \RN{1}_a+\RN{1}_b+\RN{1}_c
\end{align*}
where
$$\RN{1}_a:= \int_{B_1}\int_{B_1} \frac{(\eta(x))^{p-2}|v(x)-v(y)|^{p-2}\eta(x)(v(x)-v(y))(\varphi(x)-\varphi(y))}{|x-y|^{n+sp}}dx\;dy,$$
$$\RN{1}_b:= \int_{B_1}\int_{B_1} \frac{(\eta(x))^{p-2}|v(x)-v(y)|^{p-2}|v(y)||\eta(x)-\eta(y)||\varphi(x)-\varphi(y)|}{|x-y|^{n+sp}}dx\;dy,$$
$$\RN{1}_c:= \int_{B_1}\int_{B_1} \frac{|v(y)|^{p-2}|\eta(x)-\eta(y)|^{p-2}|v(y)||\eta(x)-\eta(y)||\varphi(x)-\varphi(y)|}{|x-y|^{n+sp}}dx\;dy.$$
Observe that
\begin{align*}
    \RN{1}_a=& \int_{B_1}\int_{B_1} \frac{|v(x)-v(y)|^{p-2}(\eta(x))^{p-1}(v(x)-v(y))(\varphi(x)-\varphi(y))}{|x-y|^{n+sp}}dx\;dy\\
    = &\int_{B_1}\int_{B_1} \frac{|v(x)-v(y)|^{p-2}(v(x)-v(y))((\eta(x))^{p-1}\varphi(x)-(\eta(y))^{p-1}\varphi(y))}{|x-y|^{n+sp}}dx\;dy\\
    &- \int_{B_1}\int_{B_1} \frac{|v(x)-v(y)|^{p-2}(v(x)-v(y))\varphi(y)((\eta(x))^{p-1}-(\eta(y))^{p-1})}{|x-y|^{n+sp}}dx\;dy\\
    =:&\; \RN{1}_{aa}-\RN{1}_{ab}.
\end{align*}

By (\ref{eq:fraclapalcian in B1 for v}), we get
\begin{align*}
    \RN{1}_{aa}&=\int_{B_1}\int_{B_1} \frac{|v(x)-v(y)|^{p-2}(v(x)-v(y))((\eta(x))^{p-1}\varphi(x)-(\eta(y))^{p-1}\varphi(y))}{|x-y|^{n+sp}}dx\;dy\\&=-2\int_{\R^n}f\;(\eta^{p-1}\varphi).
\end{align*}
$$$$
This implies
\begin{equation*}
    |\RN{1}_{aa}|\leq 2\Vert f \Vert_{L^{p'}(\R^n,\R^N)}\Vert \eta^{p-1}\varphi \Vert_{L^{p}(\R^n,\R^N) } \leq C(\eta,n,s,p) \Vert \varphi \Vert_{L^{p}(B_{1/4},\R^N) }
\end{equation*}
since $\Vert f\Vert_{L^{p'}(\R^n,\R^N)}\leq C(n,s,p).$

Define $$h_1:=\chi_{B_1}\int_{B_1} \frac{|v(x)-v(y)|^{p-2}(v(x)-v(y))((\eta(x))^{p-1}-(\eta(y))^{p-1})}{|x-y|^{n+sp}}dx.$$
Let $p'=\frac{p}{p-1}$.
For $y\in B_{1}$,
\begin{align*}
    \mu_y( B_1)&:= \int_{B_1}\frac{1}{|x-y|^{n+s-1}}dx\\
    &\leq C(n,s)\int_{0}^2 \frac{\nu^{n-1}}{\nu^{n+s-1}}d\nu\leq C(n,s).
\end{align*}
So, we can apply Jensen's inequality and equation (\ref{eq:bound for v}) to get
\begin{align*}
    \Vert h_1\Vert_{L^{p'}(\R^n,\R^N)}^{p'}&=\int_{B_{1}}\left|\int_{ B_1}\frac{|v(x)-v(y)|^{p-2}(v(x)-v(y))((\eta(x))^{p-1}-(\eta(y))^{p-1})}{|x-y|^{n+sp}} dx\right|^{p'} dy\\
    &\leq \Vert \nabla\eta\Vert_{L^\infty(B_1)}\int_{B_{1}}\left(\int_{ B_1}\frac{|v(x)-v(y)|^{p-1}|x-y|}{|x-y|^{s(p-1)}|x-y|^{n+s}} dx\right)^{p'} dy\\
    &\leq C(\eta)\int_{B_{1}}\mu_y( B_1)^{p'}\left(\barint_{ B_1}\frac{|v(x)-v(y)|^{p-1}}{|x-y|^{s(p-1)}}\; d\mu_y(x)\right)^{p'} dy\\
&\leq C(n,s,p,\eta)\int_{B_{1}}\int_{B_1}\frac{|v(x)-v(y)|^{p}}{|x-y|^{n+sp}} dx\; dy\leq C(n,s,p,\eta).
\end{align*}
Then,
$$\Vert h_1 \Vert_{L^{p'}(\R^n,\R^N)}^{p'}\leq C(n,s, p,\eta )<\infty$$
and

$$\RN{1}_{ab}=\int_{B_1}\int_{B_1} \frac{|v(x)-v(y)|^{p-2}(v(x)-v(y))((\eta(x))^{p-1}-(\eta(y))^{p-1})}{|x-y|^{n+sp}}\varphi(y)\;dx\;dy=\int_{\R^n}h_1\varphi.$$
This gives
\begin{equation}
    |\RN{1}_{ab}|\leq \Vert h_1 \Vert_{L^{p'}(\R^n,\R^N)}\Vert \varphi \Vert_{L^{p}(B_{1/4},\R^N) } \leq C(\eta,n,s,p) \Vert \varphi \Vert_{L^{p}(B_{1/4},\R^N) }.
\end{equation}

Since $\eta\equiv 1$ on $B_{1/2}$ and $\varphi\in C_c^\infty(B_{1/4},\R^N)$, we have
\begin{align*}
    \RN{1}_b=& \int_{B_1}\int_{B_1} \frac{(\eta(x))^{p-2}|v(x)-v(y)|^{p-2}|v(y)||\eta(x)-\eta(y)||\varphi(x)-\varphi(y)|}{|x-y|^{n+sp}}dx\;dy\\
    =&\int_{B_{1/4}}\int_{B_1\backslash B_{1/2}} \frac{(\eta(x))^{p-2}|v(x)-v(y)|^{p-2}|v(y)||\eta(x)-\eta(y)||\varphi(y)|}{|x-y|^{n+sp}}dx\;dy\\
    &+ \int_{B_1\backslash B_{1/2}} \int_{B_{1/4}} \frac{(\eta(x))^{p-2}|v(x)-v(y)|^{p-2}|v(y)||\eta(x)-\eta(y)||\varphi(x)|}{|x-y|^{n+sp}}dx\;dy\\
    =:&\; \RN{1}_{ba}+\RN{1}_{bb}.
\end{align*}

Note that in the above integrals $|x-y|\geq \frac{1}{4}$. Therefore, we get
\begin{align*}
\RN{1}_{ba}&\aleq_{\eta,p}\int_{B_{1/4}}\int_{B_1\backslash B_{1/2}}|v(x)|^{p-2}|v(y)| |\varphi(y)|dx\;dy+\int_{B_{1/4}}\int_{B_1\backslash B_{1/2}}|v(y)|^{p-1} |\varphi(y)|dx\;dy\\
&\aleq \Vert v\Vert_{L^{p-2}(B_1,\R^N)}^{p-2}\Vert v\Vert_{L^{p'}(B_{1/4},\R^N)}\Vert \varphi\Vert_{L^{p}(B_{1/4},\R^N)}+\Vert v\Vert_{L^{p}(B_{1/4},\R^N)}^{p-1}\Vert \varphi\Vert_{L^{p}(B_{1/4},\R^N)}\\
&\leq C(\eta,n,s,p)\Vert \varphi\Vert_{L^{p}(B_{1/4},\R^N)}
\end{align*}
and
\begin{align*}
    \RN{1}_{bb}&\aleq_{\eta,p}\int_{B_1\backslash B_{1/2}}\int_{B_{1/4}}|v(x)|^{p-2}|v(y)| |\varphi(x)|dx\;dy+\int_{B_1\backslash B_{1/2}}\int_{B_{1/4}}|v(y)|^{p-1} |\varphi(x)|dx\;dy\\
    &\aleq \Vert v\Vert_{L^{1}(B_1,\R^N)}\Vert v\Vert_{L^{\left(\frac{p-2}{p-1}\right)p}(B_{1/4},\R^N)}^{p-2}\Vert \varphi\Vert_{L^{p}(B_{1/4},\R^N)}+\Vert v\Vert_{L^{p-1}(B_{1},\R^N)}^{p-1}\Vert \varphi\Vert_{L^{p}(B_{1/4},\R^N)}\\
    &\leq C(\eta,n,s,p)\Vert \varphi\Vert_{L^{p}(B_{1/4},\R^N)}.
\end{align*}

We also have
\begin{align*}
    \RN{1}_c:=& \int_{B_1}\int_{B_1} \frac{|v(y)|^{p-1}|\eta(x)-\eta(y)|^{p-1}|\varphi(x)-\varphi(y)|}{|x-y|^{n+sp}}dx\;dy\\
    =&\int_{B_{1/4}}\int_{B_1\backslash B_{1/2}} \frac{|v(y)|^{p-1}|\eta(x)-\eta(y)|^{p-1}|\varphi(y)|}{|x-y|^{n+sp}}dx\;dy\\
    &+ \int_{B_1\backslash B_{1/2}} \int_{B_{1/4}} \frac{|v(y)|^{p-1}|\eta(x)-\eta(y)|^{p-1}|\varphi(x)|}{|x-y|^{n+sp}}dx\;dy\\
    =:&\; \RN{1}_{ca}+\RN{1}_{cb}.
\end{align*}

Note that in the above integrals $|x-y|\geq \frac{1}{4}$. Therefore, we get
\begin{align*}
    \RN{1}_{ca}&\aleq_{\eta,p}\int_{B_{1/4}}\int_{B_1\backslash B_{1/2}}|v(y)|^{p-1}|\varphi(y)|dx\;dy\\
    &\aleq \Vert v\Vert_{L^{p}(B_1,\R^N)}^{p-1}\Vert \varphi\Vert_{L^{p}(B_{1/4},\R^N)}\\
    &\leq C(\eta,n,s,p)\Vert \varphi\Vert_{L^{p}(B_{1/4},\R^N)}
\end{align*}
and
\begin{align*}
    \RN{1}_{cb}&\aleq_{\eta,p}\int_{B_{1/4}}\int_{B_1\backslash B_{1/2}}|v(y)|^{p-1}|\varphi(x)|dx\;dy\\
    &\aleq \Vert v\Vert_{L^{p}(B_{1/4},\R^N)}^{p-1}\Vert \varphi\Vert_{L^{p}(B_{1/4},\R^N)}\\
    &\leq C(\eta,n,s,p)\Vert \varphi\Vert_{L^{p}(B_{1/4},\R^N)}.
\end{align*}

Combining the bounds on $\RN{1}_{aa},\RN{1}_{ab},\RN{1}_{ba},\RN{1}_{bb},\RN{1}_{ca},\RN{1}_{cb}$, we get
\begin{equation}\label{eq:bound for I}
    |\RN{1}|\leq C(\eta,n,s,p)\Vert \varphi\Vert_{L^{p}(B_{1/4},\R^N)}.
\end{equation}

Also, for any $\varphi\in C_c^\infty(B_{1/4},\R^N)$, we have
\begin{align*}
    \RN{2}&:=\int_{\R^n\backslash B_1}\int_{B_1} \frac{|\tilde{v}(x)-\tilde{v}(y)|^{p-2}(\tilde{v}(x)-\tilde{v}(y))(\varphi(x)-\varphi(y))}{|x-y|^{n+sp}}dx \;dy\\
    &=\int_{\R^n\backslash B_1}\int_{B_1} \frac{|\eta(x)v(x)-\eta(y)v(y)|^{p-2} (\eta(x)v(x)-\eta(y)v(y))(\varphi(x)-\varphi(y))}{|x-y|^{n+sp}}dx\;dy\\
    &= \int_{\R^n\backslash B_1}\int_{B_{1/4}} \frac{|v(x)|^{p-2} v(x)\varphi(x)}{|x-y|^{n+sp}}dx\;dy
\end{align*}
since $\eta\equiv 0$, $\varphi\equiv 0$ on $\R^n\backslash B_1$ and $\eta\equiv 1$ on $B_{1/4}$. Now, for any $x\in B_{1/4}$, we have
\begin{align*}
    \int_{\R^n\backslash B_1}\frac{1}{|x-y|^{n+sp}}dy&= \int_{\R^n\backslash B_1(-x)}\frac{1}{|y|^{n+sp}}dy\\
    &\leq \int_{\R^n\backslash B_{1/4}}\frac{1}{|y|^{n+sp}}dy\\
    &\aleq \int_{1/4}^\infty \frac{1}{r^{n+sp}}r^{n-1}dr= \left.\frac{-1}{sp}\;\frac{1}{r^{sp}}\right|_{1/4}^\infty<C(n,s,p).
\end{align*}
Thus,
$$\RN{2}\aleq \Vert v\Vert_{L^{p}(B_{1/4},\R^N)}^{p-1}\Vert \varphi\Vert_{L^{p}(B_{1/4},\R^N)}<\infty.$$
That is,
\begin{equation}\label{eq:bound for II}
    |\RN{2}|\leq C(\eta,n,s,p)\Vert \varphi\Vert_{L^{p}(B_{1/4},\R^N)}.
\end{equation}
For any $\varphi\in C^\infty_c(B_{1/4},\R^N)$, define $$L[\varphi]:= \int_{\R^n}\int_{\R^n} \frac{|\tilde{v}(x)-\tilde{v}(y)|^{p-2}(\tilde{v}(x)-\tilde{v}(y))(\varphi(x)-\varphi(y))}{|x-y|^{n+sp}}dx \;dy.$$
Note that $$L[\varphi]=\RN{1}+2\;\RN{2}.$$ From (\ref{eq:bound for I}) and (\ref{eq:bound for II}), we get $L$ is a bounded linear functional on $L^p(B_{1/4},\R^N)$. Hence, there exists a $h\in L^{p'}(B_{1/4},\R^N)$ such that $\Vert h\Vert_{L^{p'}(B_{1/4},\R^N)}\leq C(\eta,n,s,p)$ and
$$\int_{\R^n}\int_{\R^n} \frac{|\tilde{v}(x)-\tilde{v}(y)|^{p-2}(\tilde{v}(x)-\tilde{v}(y))(\varphi(x)-\varphi(y))}{|x-y|^{n+sp}}dx \;dy=\int_{\R^n}h\;\varphi.$$

So, we get $u $ is a weak solution of
$$(-\Delta_p)^s \tilde{v}=h\;\;\;\;\text{in}\;\;B_{1/4},$$
where $h\in L^{p'}(B_{1/4},\R^N).$ Since supp($\tilde{v}$)$\subset B_1$ and $v\in L^p(B_1,\R^N)$, we get $\tilde{v}\in L^{p-1}_{sp}(\R^n,\R^N)$. Also,
\begin{align*}
    \int_{B_1}\int_{B_1}&\frac{|\tilde{v}(x)-\tilde{v}(y)|^p}{|x-y|^{n+sp}}\;dx\;dy\\&=\int_{B_1}\int_{B_1}\frac{|\eta(x)v(x)-\eta(y)v(y)|^p}{|x-y|^{n+sp}}\;dx\;dy\\&\aleq \int_{B_1}\int_{B_1}\frac{|\eta(x)(v(x)-v(y))|^p}{|x-y|^{n+sp}}\;dx\;dy+\int_{B_1}\int_{B_1}\frac{|v(y)(\eta(x)-\eta(y))|^p}{|x-y|^{n+sp}}\;dx\;dy\\
    &\aleq \int_{B_1}\int_{B_1}\frac{|v(x)-v(y)|^p}{|x-y|^{n+sp}}\;dx\;dy+\Vert \nabla \eta\Vert_{L^\infty(B_1)}\int_{B_1}\int_{B_1}\frac{|x-y|^p}{|x-y|^{n+sp}}\;dx\;dy\\
    &\aleq_{\eta} \int_{B_1}\int_{B_1}\frac{|v(x)-v(y)|^p}{|x-y|^{n+sp}}\;dx\;dy+\int_{B_1}|v(y)|^p\int_{B_1(-y)}|x|^{-n+(1-s)p}\;dx\;dy\\
    &\aleq \int_{B_1}\int_{B_1}\frac{|v(x)-v(y)|^p}{|x-y|^{n+sp}}\;dx\;dy+\int_{B_1}|v(y)|^p\int_{B_2(0)}|x|^{-n+(1-s)p}\;dx\;dy\\
    &\aleq_{n} \int_{B_1}\int_{B_1}\frac{|v(x)-v(y)|^p}{|x-y|^{n+sp}}\;dx\;dy+\int_{B_1}|v(y)|^p\int_{0}^2t^{-n+(1-s)p}t^{n-1}\;dt\;dy\\
    &\aleq \Vert v\Vert_{W^{s,p}(B_1,\R^N)}^p\leq C(n,s,p)<\infty.
\end{align*}
So, $\tilde{v}\in W^{s,p}(B_1,\R^N)$. Also, since $\eta\equiv 0$ outside $B_{3/4}$, we have
\begin{align*}
    \int_{B_1}\int_{\R^n\backslash B_1}&\frac{|\tilde{v}(x)-\tilde{v}(y)|^p}{|x-y|^{n+sp}}\;dx\;dy\\&=\int_{B_{3/4}}\int_{\R^n\backslash B_1}\frac{|\eta(x)v(x)|^p}{|x-y|^{n+sp}}\;dx\;dy
    \\&\aleq \int_{B_{3/4}}|v(x)|^p\int_{\R^n\backslash B_1}\frac{1}{|x-y|^{n+sp}}\;dx\;dy \\
    &\aleq \int_{B_{3/4}}|v(x)|^p \int_{1/4}^\infty \frac{t^{n-1}}{t^{n+sp}} dt\;dy\\
    &\leq \Vert v\Vert_{W^{s,p}(B_1,\R^N)}^p\leq C(n,s,p)<\infty.
\end{align*}
By \Cref{th:alternate for BLS}, under our assumptions, there exists a constant $C=C(n,s,p,\delta)$ such that 
\begin{align*}
    [\tilde{v}]_{C^{0,\alpha}(B_{r_0},\R^N)}\leq \frac{C}{r_0^{\frac{\delta}{p}}}&\left( \int_{B_{4r_0}}\int_{\R^n}\frac{|\tilde{v}(x)-\tilde{v}(y)|^p}{|x-y|^{n+sp}}dx\;dy+\Vert h \Vert_{L^{p'}(B_{2r_0},\R^N)}[\tilde{v}]_{W^{s,p}(B_{2r_0},\R^N)} \right)^{1/p}
\end{align*}
where $\alpha:=(sp+\delta-n )/p$ and $r_0<\frac{1}{8}$. This implies for all $x,y \in B_{r_0}$, we have
\begin{equation}\label{eq:holder bound for u}
    |\tilde{v}(x)-\tilde{v}(y)|\leq C(n,s,p,\eta)|x-y|^\alpha.
\end{equation}
But $\tilde{v}=v$ on $B_{1/2}$. Hence, for any $0<r<{r_0}$, we get
\begin{equation}\label{eq:holder cont of v}
    r^{-n}\int_{B_r} \barint_{B_r} |v(x)-v(y)|^p\;dx\;dy\leq C_1 r^{\alpha p}
\end{equation}
where $C_1=C(n,s,p,\eta)$.

Since $v_i\to v$ in $L^p_{\text{loc}}(B_1,\R^N)$, for $\theta<\frac{r_0}{6}$ we can find an $i(\theta)$, such that for all $i\geq i(\theta) $
\begin{equation}\label{eq:strong convergence of v_i}
    \theta^{-n}\int_{B_{6\theta}}|v_i-v|^p\leq \theta.
\end{equation}
By \Cref{th:caccioppoli}, for all $i\geq i(\theta)$
\begin{align*}
    \theta^{sp-n}\int_{B_\theta}\int_{\R^n}&\frac{|u_i(x)-u_i(y)|^p}{|x-y|^{n+sp}}dx\;dy\leq C\theta^{-n}\int_{B_{6\theta}} |u_i-(u_i)_{B_{6\theta}}|^p\\
    &= C \theta^{-n}\varepsilon_i^p \int_{B_{6\theta}}|v_i-(v_i)_{B_{6\theta}}|^p \\
    &\leq C\varepsilon_i^p \theta^{-n}\left( 2\int_{B_{6\theta}}|v_i-v|^p\;+\int_{B_{6\theta}} \barint_{B_{6\theta}}|v(x)-v(y)|^p \;dx\;dy \right)\\
    &\leq C' \varepsilon^p_i (\theta +\theta^ {\alpha p })
\end{align*}
where the last inequality follows from (\ref{eq:strong convergence of v_i}) and (\ref{eq:holder cont of v}) and $C'=C(n,s,p,\eta,N,\mathcal{N})$. Choose $\theta$ small enough so that $C'(\theta+\theta^{\alpha p})<\frac{1}{2}$ to get a contradiction to (\ref{eq:blow up 3}).
\end{proof}


\section{Proof of the Main Theorem}\label{section proof}

Now, we present the proof of our main theorem. To simplify, we split our main theorem into two results.  Firstly, we prove that the minimizers are locally H\"older continuous around the points where the minimizers satisfy the decay estimate. This follows from Campanato's theorem [\Cref{th:campanato}]. Secondly, we discuss the dimension of the singular set. The points, around which the minimizers fail to satisfy a decay in energy, belong to a singular set.
\begin{theorem}[H\"{o}lder continuity]\label{th:nloc-holdercont}
 Let $\delta,s,p,\theta$ and $\varepsilon$ be as in \Cref{th:decay}. Let $\Omega$ and $\mathcal{N}$ be as in \Cref{assumptions}. Assume that $u\in \dot{W}^{s,p}(\R^n,\mathcal{N})$ is a minimizing $W^{s,p}$-harmonic map in $\Omega$. If $R>0$ and $z_0\in \Omega$ such that $B_R(z_0)\subset \Omega$ and
 \[
  R^{sp-n}\int_{B_R(z_0)} \int_{\R^n}\frac{|u(x) - u(y)|^p}{|x-y|^{n+sp}} \dif x \dif y \le \eps^p,
 \]
then $u\in C^{0,\alpha}(B_{\frac{R}{2}}(z_0), \mathcal{N})$ for some $\alpha>0$.
\end{theorem}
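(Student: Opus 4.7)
The plan is to iterate the decay estimate \Cref{th:decay} at every point of $B_{R/2}(z_0)$, convert the resulting polynomial energy decay into an $L^p$-oscillation decay via a fractional Poincar\'e inequality, and conclude by Campanato's criterion \Cref{th:campanato}. The key observation is that the class of minimizing $W^{s,p}$-harmonic maps is invariant under translation and rescaling: for $B_r(z)\subset\Omega$ the map $\tilde u(x):=u(z+rx)$ is a minimizing $W^{s,p}$-harmonic map on $B_1$, and the $(sp-n)$-homogeneity of the energy gives
$$\int_{B_1}\int_{\R^n}\frac{|\tilde u(x)-\tilde u(y)|^p}{|x-y|^{n+sp}}\dif x\dif y \;=\; r^{sp-n}\int_{B_r(z)}\int_{\R^n}\frac{|u(x)-u(y)|^p}{|x-y|^{n+sp}}\dif x\dif y.$$

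Fix $z\in B_{R/2}(z_0)$ and set $r_0:=R/2$. The inclusion $B_{r_0}(z)\subset B_R(z_0)$ combined with the hypothesis yields
$$r_0^{sp-n}\int_{B_{r_0}(z)}\int_{\R^n}\frac{|u(x)-u(y)|^p}{|x-y|^{n+sp}}\dif x\dif y\;\le\; 2^{n-sp}\varepsilon^p.$$
Absorbing the harmless factor $2^{n-sp}$ into the choice of $\varepsilon$ at the outset, the rescaling $\tilde u_z(x)=u(z+r_0 x)$ satisfies the hypothesis of \Cref{th:decay}. Since that estimate halves the rescaled energy at each application, the smallness condition propagates to every subsequent scale, and iterating gives for all $k\in\N$
$$(\theta^k r_0)^{sp-n}\int_{B_{\theta^k r_0}(z)}\int_{\R^n}\frac{|u(x)-u(y)|^p}{|x-y|^{n+sp}}\dif x\dif y \;\le\; 2^{-k}\,r_0^{sp-n}\int_{B_{r_0}(z)}\int_{\R^n}\frac{|u(x)-u(y)|^p}{|x-y|^{n+sp}}\dif x\dif y.$$
Setting $\gamma := \log 2/\log(1/\theta) > 0$ and filling in between dyadic radii yields the Morrey-type bound
$$r^{sp-n}\int_{B_r(z)}\int_{\R^n}\frac{|u(x)-u(y)|^p}{|x-y|^{n+sp}}\dif x\dif y \;\aleq\; (r/r_0)^{\gamma}\,\varepsilon^p$$
uniformly in $z\in B_{R/2}(z_0)$ and $0<r\le r_0$.

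The fractional Poincar\'e inequality
$$\int_{B_r(z)}|u-(u)_{B_r(z)}|^p\dif x \;\aleq\; r^{sp}\int_{B_r(z)}\int_{B_r(z)}\frac{|u(x)-u(y)|^p}{|x-y|^{n+sp}}\dif x\dif y$$
(which follows from Jensen and $|x-y|\le 2r$ on $B_r\times B_r$), combined with the bound above after enlarging the inner integration from $B_r(z)$ to $\R^n$, gives
$$\int_{B_r(z)}|u-(u)_{B_r(z)}|^p\dif x \;\aleq\; r^{n+\gamma}$$
uniformly in $z\in B_{R/2}(z_0)$ and $r\le r_0$. Choosing $\alpha := \min(\gamma,p)/p > 0$ and applying \Cref{th:campanato} with $\lambda = n+\alpha p \in (n,n+p]$ concludes $u\in C^{0,\alpha}(B_{R/2}(z_0),\mathcal N)$.

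The main care is in organizing the iteration uniformly in $z\in B_{R/2}(z_0)$: the translation step costs at most a factor $2^{n-sp}$ (absorbed into $\varepsilon$), and the scaling is for free thanks to the $(sp-n)$-homogeneity of the Gagliardo energy, so the smallness threshold of \Cref{th:decay} is preserved across all translations and rescalings. Once this bookkeeping is in place, the conversion through Poincar\'e and Campanato is entirely standard.
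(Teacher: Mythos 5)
Your proof is correct and takes essentially the same route as the paper: iterate Proposition~\ref{th:decay} to produce a Morrey-type decay of the localized energy, pass to $L^p$-oscillations via the Jensen/fractional-Poincar\'e step, and invoke Campanato's criterion (\Cref{th:campanato}). The only presentational difference is that you iterate directly at each $z\in B_{R/2}(z_0)$ with starting radius $r_0=R/2$ and explicitly flag the translation cost $2^{n-sp}$ incurred when passing from $B_R(z_0)$ to $B_{R/2}(z)$, whereas the paper first carries out the dyadic iteration at $z_0$ and then asserts the bound at arbitrary $z$ without comment on this factor; your remark that one should shrink $\varepsilon$ by the fixed factor $2^{(sp-n)/p}$ at the outset is the clean way to make that step watertight, and is if anything a small improvement in rigor over the paper's exposition. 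Your exponent bookkeeping ($\gamma=\log 2/\log(1/\theta)$, which coincides with the paper's $\beta$, and the automatic bound $\gamma<1$ for $\theta<1/2$, so the $\min(\gamma,p)$ is unnecessary but harmless) and the application of Campanato with $\lambda=n+\gamma$ match the paper's argument.
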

\begin{proof}
Define $u_{z_0,R}(x):= u(Rx+z_0)$.
Then $u_{z_0,R}\in \dot{W}^{s,p}(\R^n,\mathcal{N})$ and is a minimizing $W^{s,p}$-harmonic map in $B_1$. We apply \Cref{th:decay} for $u_{z_0,R}$, which satisfy the smallness assumption \eqref{eq:blowup-smallness} and obtain
 \[
  (\theta R)^{sp-n}\int_{B_{\theta R}(z_0)}\int_{\R^n} \frac{|u(x) - u(y)|^p}{|x-y|^{n+sp}} \dif x \dif y< \frac12 R^{sp-n}\int_{B_R(z_0)}\int_{\R^n} \frac{|u(x) - u(y)|^p}{|x-y|^{n+sp}} \dif x \dif y.
 \]
We repeat this by applying theorem \Cref{th:decay} to $u_{x_0,\theta R} , \ldots, u_{x_0,\theta^j R}$ for each $k=0,\ldots, j$. We obtain
\begin{equation}\label{eq:holder-iteration}
\begin{split}
    (\theta^j R)^{sp-n} \int_{B_{\theta^j R}(z_0)} \int_{\R^n}  \frac{|u(x) - u(y)|^p}{|x-y|^{n+sp}} \dif x \dif y< \brac{\frac{1}{2}}^j R^{sp-n}\int_{B_R(z_0)} \int_{\R^n}  \frac{|u(x) - u(y)|^p}{|x-y|^{n+sp}} \dif x \dif y.
\end{split}
\end{equation}
For each $r\in(0,R)$, we choose a $j$ such that $\theta^{j+1} R \le r \le \theta ^j R$. Note that
$$\theta^j=\left(\frac{1}{2}\right)^{\log_{\frac{1}{2}} (\theta^j)}=\left(\frac{1}{2}\right)^{j\frac{\ln(\theta)}{\ln(1/2)}}=\left(\frac{1}{2}\right)^{\frac{j}{\beta}}$$
where $\beta:=\frac{\ln(1/2)}{\ln(\theta)}$. Since $\theta\in (0,1/2)$, note that $0<\beta<1$.
Now, we have
\begin{align*}
    r^{sp-n} \int_{B_r(z_0)} \int_{\R^n} \frac{|u(x) - u(y)|^p}{|x-y|^{n+sp}}\dif x \dif y &
  \le (\theta^{j+1} R)^{sp-n} \int_{B_{\theta^j R}(z_0)}\int_{\R^n}\frac{|u(x) - u(y)|^p}{|x-y|^{n+sp}}\dif x \dif y\\
  &=\theta^{sp-n}(\theta^{j} R)^{sp-n} \int_{B_{\theta^j R}(z_0)}\int_{\R^n}\frac{|u(x) - u(y)|^p}{|x-y|^{n+sp}}\dif x \dif y
\end{align*}
Thus, from \eqref{eq:holder-iteration} we get
\[
 \begin{split}
  r^{sp-n} \int_{B_r(z_0)} \int_{\R^n} &\frac{|u(x) - u(y)|^p}{|x-y|^{n+sp}}\dif x \dif y \\
  &\le \theta^{sp-n}\brac{\frac12}^j R^{sp-n}\int_{B_R(z_0)}\int_{\R^n}\frac{|u(x) - u(y)|^p}{|x-y|^{n+sp}}\dif x \dif y\\
  &= \theta^{sp -n} (\theta^j)^\beta R^{sp-n}\int_{B_R(z_0)}\int_{\R^n}\frac{|u(x) - u(y)|^p}{|x-y|^{n+sp}}\dif x \dif y\\
  &\le \theta^{sp-n-\beta} \left(\frac{r}{R}\right)^{\beta} R^{sp-n}\int_{B_R(z_0)}\int_{\R^n}\frac{|u(x) - u(y)|^p}{|x-y|^{n+sp}}\dif x \dif y.
 \end{split}
\]
Now, for any $z\in B_{\frac{R}{2}}(z_0)$ and $0<r<\frac{R}{2}$, we get
\begin{equation*}
    \begin{split}
        r^{-\beta}r^{sp-n}\int_{B_r(z)}\int_{\R^n}&\frac{|u(x) - u(y)|^p}{|x-y|^{n+sp}}\dif x \dif y\\&\aleq_{\theta} R^{-\beta}R^{sp-n}\int_{B_R(z_0)}\int_{\R^n}\frac{|u(x) - u(y)|^p}{|x-y|^{n+sp}}\dif x \dif y\\&\leq C(\theta).
    \end{split}
\end{equation*}
For all $z\in B_{\frac{R}{2}}(z_0)$ and $0<r<\frac{R}{2}$,
\[
\begin{split}
  \int_{B_r(z)}|u(x)-(u)_{B_r(z)}|^p \dif x
 &\le \int_{B_r(z)} \barint_{B_r(z)} |u(x) - u(y)|^p \dif x \dif y \\
 & \le  C r^{-n}r^{sp+n} \int_{B_r(z)} \int_{B_r(z)} \frac{|u(x) - u(y)|^p}{|x-y|^{n+sp}} \dif x \dif y\\
 & \le  C r^{sp} \int_{B_r(z)} \int_{\R^n} \frac{|u(x) - u(y)|^p}{|x-y|^{n+sp}} \dif x \dif y.
\end{split}
 \]

Here, the constant is invariant under scaling and translation of $B_r(z)$. Therefore,
\[
\begin{split}
 \sup_{\substack{r\in \left(0,\frac{R}{2}\right)\\ z\in B_{\frac{R}{2}}(z_0)}}r^{-n-\beta} &\int_{B_r(z)}|u(x)-(u)_{B_r(z)}|^p \dif x \\
 &\aleq_\theta  \sup_{\substack{r\in \left(0,\frac{R}{2}\right)\\ z\in B_{\frac{R}{2}}(z_0)}} r^{sp-n-\beta } \int_{B_r(z)} \int_{\R^n} \frac{|u(x) - u(y)|^p}{|x-y|^{n+sp}} \dif x \dif y\\
 &\leq C(\theta).
\end{split}
 \]
Thus, by \Cref{th:campanato} we obtain $u\in C^{0,\alpha}(B_{\frac{R}{2}}(x_0),\mathcal{N})$ where $\alpha:=\frac \beta p$ and $0<\alpha\leq 1$.
\end{proof}
\begin{theorem}[Dimension of the singular set]\label{corollaryregularity}
Let $\delta,s$ and $p$ be as in \Cref{th:decay}. Let $\Omega$ and $\mathcal{N}$ be as in \Cref{assumptions}. Then there exists $\zeta=\zeta(n,p,s)>0$ such that the following holds: if $u\in \dot{W}^{s,p}(\R^n,\mathcal{N})$ is a minimizing $W^{s,p}$-harmonic map in $\Omega$, then there exists a singular set $\Sigma\subset \Omega$(relatively closed) such that
$$\mathcal{H}^{n-sp-\zeta}(\Sigma)=0$$
and for any $x_0\in \Omega\backslash \Sigma$, $u$ is H\"older continuous in a small neighborhood of $x_0$.
\end{theorem}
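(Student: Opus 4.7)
The plan is to follow the standard $\varepsilon$-regularity / Federer dimension-reduction scheme. I would define $\Sigma$ as the (automatically relatively closed) complement in $\Omega$ of the set on which $u$ is locally H\"older continuous. \Cref{th:nloc-holdercont} then shows that
\[
\Sigma\;\subset\;\Sigma':=\left\{x_0\in\Omega:\limsup_{R\to 0}R^{sp-n}\int_{B_R(x_0)}\int_{\R^n}\frac{|u(x)-u(y)|^p}{|x-y|^{n+sp}}\,dx\,dy>0\right\},
\]
and \Cref{Frostmann} applied with $\alpha=n-sp$ to $u\in\dot{W}^{s,p}(\R^n,\R^N)$ gives the preliminary bound $\mathcal{H}^{n-sp}(\Sigma')=0$, and hence $\mathcal{H}^{n-sp}(\Sigma)=0$.

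To reduce the dimension strictly below $n-sp$, I would establish higher integrability of the nonlocal energy density
\[
\mathcal{F}(y):=\int_{\R^n}\frac{|u(x)-u(y)|^p}{|x-y|^{n+sp}}\,dx.
\]
Combining \Cref{th:caccioppoli} with \Cref{Lemma C.4} (applied at $p_2=p$ and the Sobolev conjugate $\bar q:=np/(n+sp)$, which lies in $(1,p)$ in the relevant regime where $sp$ is close to $n$) and absorbing the Caccioppoli factor $\rho^{-sp}$ via the scaling identity $n+sp=np/\bar q$, I expect to arrive at an estimate of the form
\[
\barint_{B_\rho(x_0)}\mathcal{F}\,dy\;\le\;C\left(\barint_{B_{\Lambda\rho}(x_0)}\mathcal{F}^{\bar q/p}\,dy\right)^{p/\bar q},\qquad B_{\Lambda\rho}(x_0)\subset\subset\Omega.
\]
This is a reverse H\"older inequality for $g:=\mathcal{F}^{\bar q/p}$ at the super-critical exponent $p/\bar q>1$, so \Cref{th:gehring} yields $\mathcal{F}\in L^{1+\varepsilon}_{\loc}(\Omega)$ for some $\varepsilon=\varepsilon(n,s,p,\mathcal{N})>0$. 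After shrinking $\varepsilon$ if necessary so that $sp(1+\varepsilon)<n$, a refined Frostman argument concludes the proof: for any $x_0\in\Sigma$ and any sequence $R_j\downarrow 0$ realising the $\limsup$, H\"older's inequality with conjugate exponents $1+\varepsilon$ and $(1+\varepsilon)/\varepsilon$ gives
\[
\int_{B_{R_j}(x_0)}\mathcal{F}^{1+\varepsilon}\,dy\;\ge\;c'R_j^{n-sp(1+\varepsilon)},
\]
so the classical Frostman lemma for the locally finite Radon measure $\mathcal{F}^{1+\varepsilon}\,dy$ forces $\mathcal{H}^{n-sp(1+\varepsilon)}(\Sigma)=0$, i.e.\ the theorem with $\zeta:=sp\,\varepsilon$.

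The main technical difficulty is extracting the reverse H\"older inequality in the clean form above. \Cref{Lemma C.4} requires strict inequality between its inner exponent and $p_2$, whereas the scaling cancellation we rely on naturally calls for the borderline $p_{in}=p_2=p$. Justifying this will require either an approximation argument with $p_{in}=p-\eta$ and $\eta\downarrow 0$ (while monitoring the constant in \Cref{Lemma C.4}) or a dyadic splitting of the non-local kernel into a near-diagonal piece---where a weighted H\"older matches exponents exactly---and a far-field piece, which is harmless thanks to the uniform bound $\|u\|_{L^\infty}\le\diam\mathcal{N}$ coming from the compactness of the target. The smallness of the resulting $\varepsilon$, and therefore of $\zeta$, originates from this step, consistent with the ``slight dimension reduction'' described in the introduction.
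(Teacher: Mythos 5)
Your overall strategy---a preliminary Frostman bound giving $\mathcal{H}^{n-sp}(\Sigma)=0$, a reverse H\"older inequality from the Caccioppoli estimate combined with \Cref{Lemma C.4}, Gehring's lemma to improve integrability of the nonlocal energy density, and a refined Frostman argument---matches the structure of the paper's proof, and your final bookkeeping (from $\mathcal{F}\in L^{1+\varepsilon}_{\loc}$ to $\mathcal{H}^{n-sp(1+\varepsilon)}(\Sigma)=0$, hence $\zeta=sp\,\varepsilon$) is sound.

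But the difficulty you flag at the end is a real gap in the argument as written, not merely a technicality: \Cref{Lemma C.4} requires the inner exponent to lie strictly below $p_2$, so it cannot be invoked at the borderline $p_2=p$ that your cancellation relies on, and neither of your suggested workarounds (monitoring the constant as the inner exponent approaches $p$, or a dyadic kernel splitting) is carried out. The simple fix---and what the paper does---is to avoid the borderline entirely. Choose any $p_2$ with $\max\{p,\tfrac{n}{n-s}\}<p_2<\tfrac{np}{n-sp}$ (this interval is nonempty because $p>1$ and $sp<n$) and set $q:=\tfrac{np_2}{n+sp_2}$. Then $q\in(1,p)$ and $s-\tfrac{n}{q}=-\tfrac{n}{p_2}$, so \Cref{Lemma C.4} applies with inner exponent $p<p_2$. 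The extra factor $\rho^{\,n/p-n/p_2}$ picked up from the H\"older comparison $\|u-(u)_{B_{6\rho}}\|_{L^p(B_{6\rho})}\aleq\rho^{\,n/p-n/p_2}\|u-(u)_{B_{6\rho}}\|_{L^{p_2}(B_{6\rho})}$ cancels exactly against the Caccioppoli factor $\rho^{-s}$, since $-s+\tfrac{n}{p}-\tfrac{n}{p_2}=\tfrac{n}{p}-\tfrac{n}{q}$, leaving a genuine reverse H\"older inequality $\bigl(\mvint_{B_\rho}\Gamma^p\bigr)^{1/p}\aleq\bigl(\mvint_{B_{6\Lambda\rho}}\Gamma^q\bigr)^{1/q}$, where $\Gamma(y)=\bigl(\int_{\R^n}|u(x)-u(y)|^p|x-y|^{-n-sp}\,dx\bigr)^{1/p}$, with $q<p$ and a scale-invariant constant, to which \Cref{th:gehring} applies directly. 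Your cancellation heuristic was correct; it simply does not require $p_2=p$. With this single substitution your argument closes and coincides with the paper's.
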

\begin{proof}
Fix $\theta\in (0,1/2)$. Let $\varepsilon$ be as in Proposition \ref{th:decay}. Let $\varepsilon_1=\varepsilon^p$. Define
$$\Sigma:=\{x\in \Omega:\;u\notin C^{0,\alpha}(B_{\rho}(x),\mathcal{N})\;\text{for any }\rho>0\}.$$
It is clear that $\Sigma$ is relatively closed. Define
$$E:=\left\{x_0\in \Omega:\;\;\limsup_{R\to 0}R^{sp-n}\int_{B_R(x_0)}\int_{\R^n} \frac{|u(x)-u(y)|^p}{|x-y|^{n+sp}}dx\; dy >\varepsilon_1 \right\}.$$
If $x_0\notin E$, then by Theorem \ref{th:nloc-holdercont} we obtain $u$ is H\"older continuous around $x_0$. So we have
$$\Omega\backslash E\subset \Omega\backslash \Sigma.$$
This implies $\Sigma \subset E$.\\
It remains to estimate the size of $E$. From Lemma \ref{Frostmann}, we get $\mathcal{H}^{n-sp}(E)=0$. To get a better estimate we use the minimizing property once more, in form of the Caccioppoli-type estimate and Gehring's lemma.

Choose a number $p_2\in\left(1,\infty\right)$ such that $\max\{p,\frac{n}{n-s}\}<p_2<\frac{np}{n-sp}$. Define $q:=\frac{np_2}{n+sp_2}$. Then $q\in (1,p)$ and
$$s-\frac{n}{q}=-\frac{n}{p_2}.$$
Clearly, $p\in (1,p_2)$. Let $\Lambda>1$ be as in \Cref{Lemma C.4}. Let $x_0\in \Omega$. Choose $\rho_0>0$ such that $B_{6\Lambda\rho_0}(x_0)\subset \Omega$. Then, for $\rho<\rho_0$ we have
\begin{equation}\label{eq:lemma 2.9}
    \Vert u-(u)_{B_{6\rho}(x_0)}\Vert_{L^{p_2}(B_{6\rho}(x_0),\R^N)} \leq C_1\left(\int_{B_{6\Lambda\rho}(x_0 )}\left(\int_{B_{6\Lambda\rho}(x_0)}\frac{|u(x)-u(y)|^p}{|x-y|^{n+sp}}dx\right)^{\frac{q}{p}}dy\right)^{\frac{1}{q}}.
\end{equation}
    where the constant $C_1$ depends only on $s,p,p_2,q$ and the dimension.

Since $p_2>p$, we have
\begin{equation}\label{eq:L_p_2 bnd}
    \Vert u-(u)_{B_{6\rho}(x_0)}\Vert_{L^{p}(B_{6\rho}(x_0),\R^N)} \aleq_n \rho^{\frac{n}{p}-\frac{n}{p_2}}\Vert u-(u)_{B_{6\rho}(x_0)}\Vert_{L^{p_2}(B_{6\rho}(x_0),\R^N)}.
\end{equation}
From \Cref{th:caccioppoli}, we have
\[
\left(\int_{B_\rho(x_0)}\int_{\R^n}\frac{|u(x) - u(y)|^p}{|x-y|^{n+sp}}\dif x \dif y\right)^{1/p} \le C \rho^{-s }\left(\int_{B_{6\rho}(x_0)}|u(x)-(u)_{B_{6\rho}(x_0)}|^p dx\right)^{1/p}.
\]
Using \eqref{eq:lemma 2.9} and \eqref{eq:L_p_2 bnd}, we get
\begin{equation*}
    \begin{split}
        \Bigg(\int_{B_\rho(x_0)}\int_{\R^n}&\frac{|u(x) - u(y)|^p}{|x-y|^{n+sp}}\dif x \dif y\Bigg)^{1/p}\\& \le C\rho^{-s }\rho^{\frac{n}{p}-\frac{n}{p_2}}\left(\int_{B_{6\Lambda \rho}(x_0 )}\left(\int_{B_{6\Lambda \rho}(x_0)}\frac{|u(x)-u(y)|^p}{|x-y|^{n+sp}}dx\right)^{\frac{q}{p}}dy\right)^{\frac{1}{q}}.
    \end{split}
\end{equation*}
Since $s=\frac{n}{q}-\frac{n}{p_2}$, we have
\begin{equation*}
    \begin{split}
        \Bigg(\int_{B_\rho(x_0)}\int_{\R^n}&\frac{|u(x) - u(y)|^p}{|x-y|^{n+sp}}\dif x \dif y\Bigg)^{1/p} \\&\le C\rho^{\frac{n}{p}-\frac{n}{q}}\left(\int_{B_{6\Lambda \rho}(x_0 )}\left(\int_{B_{6\Lambda \rho}(x_0)}\frac{|u(x)-u(y)|^p}{|x-y|^{n+sp}}dx\right)^{\frac{q}{p}}dy\right)^{\frac{1}{q}}.
    \end{split}
\end{equation*}
This implies
\[
\left(\barint_{B_\rho(x_0)}\int_{\R^n}\frac{|u(x) - u(y)|^p}{|x-y|^{n+sp}}\dif x \dif y\right)^{1/p} \le C\left(\barint_{B_{6\Lambda \rho}(x_0 )}\left(\int_{B_{6\Lambda \rho}(x_0)}\frac{|u(x)-u(y)|^p}{|x-y|^{n+sp}}dx\right)^{\frac{q}{p}}dy\right)^{\frac{1}{q}}.
\]
Set
\[
 \Gamma(y) := \brac{\int_{\R^n}\frac{|u(x) - u(y)|^p}{|x-y|^{n+sp}}\dif x}^{\frac{1}{p}}.
\]
Then we have shown
\[
 \brac{\mvint_{B_\rho(x_0)} |\Gamma(y)|^p}^{\frac{1}{p}} \aleq \brac{\mvint_{B_{6\Lambda \rho}(x_0)} |\Gamma(y)|^q}^{\frac{1}{q}}.
\]
Since $q<p$, we can apply \Cref{th:gehring} and, we find that for some $\bar{p} > p$ we have

\[
 \brac{\mvint_{B_\rho(x_0)} |\Gamma(y)|^{\bar{p}}}^{\frac{1}{\bar{p}}} \aleq \brac{\mvint_{B_{6\Lambda\rho}(x_0)} |\Gamma(y)|^p}^{\frac{1}{p}}.
\]

In particular, for any compact set $K \subset \Omega$, we have
\[
 \int_{K} \brac{\int_{\R^n}  \frac{|u(x)-u(y)|^{p}}{|x-y|^{n+sp}} dx}^{\frac{\bar{p}}{p}} dy < \infty.
\]
We define
$$F:=\left\{x\in\Omega:\;\;\limsup_{r\to 0}r^{s\bar{p}-n}\int_{B_r(x)}\left(\int_{\R^n}\frac{|u(x)-u(y)|^p}{|x-y|^{n+sp}}dx\right)^{\frac{\bar{p}}{p}}\; dy>0\right\}.$$
By \Cref{Frostmann}, we get $\mathcal{H}^{n-s\bar{p}}(F)=0$.

Now, we have by Jensen's inequality
\begin{align*}
    \rho^{sp-n}\int_{B_\rho(x_0)}\int_{\R^n} &\frac{|u(x)-u(y)|^{p}}{|x-y|^{n+sp}} dx \; dy\\&\aleq_n \rho^{sp-n} \rho^{n-\frac{np}{\bar{p}}} \left(\int_{B_\rho(x_0)} \brac{\int_{\R^n}  \frac{|u(x)-u(y)|^{p}}{|x-y|^{n+sp}} dx}^{\frac{\bar{p}}{p}} dy\right)^{\frac{p}{\bar{p}}}\\
    &\aleq_n \rho^{sp-\frac{np}{\bar{p}}} \left(\int_{B_\rho(x_0)} \brac{\int_{\R^n}  \frac{|u(x)-u(y)|^{p}}{|x-y|^{n+sp}} dx}^{\frac{\bar{p}}{p}} dy\right)^{\frac{p}{\bar{p}}}\\
     &\aleq_n  \left(\rho^{s\bar{p}-n}\int_{B_\rho(x_0)} \brac{\int_{\R^n}  \frac{|u(x)-u(y)|^{p}}{|x-y|^{n+sp}} dx}^{\frac{\bar{p}}{p}} dy\right)^{\frac{p}{\bar{p}}}.
\end{align*}
This implies $E\subset F$. Hence, $\mathcal{H}^{n-s\bar{p}}(E)=0$.

Since $\Sigma\subset E$, we get $\mathcal{H}^{n-s\bar{p}}(\Sigma)=0$. Define $\zeta:=s\bar{p}-sp>0$, then $\mathcal{H}^{n-sp-\zeta}(\Sigma)=0$. We can conclude.

\end{proof}
\appendix
\section{H\"older regularity for fractional $p$-Laplace systems when $sp \approx n$: Proof of Theorem~\ref{th:alternate for BLS}}\label{s:nongeomplapsys}

Denote by $B_r(x_0)$ a ball of radius $r$ and center $x_0$, such that $B_{10r}(x_0) \subset  \Omega$.

By assumption we have for any $\varphi \in C_c^\infty(B_{4r}(x_0),\R^N)$
\begin{equation}\label{eq:pde}
 \int_{\R^n} \int_{\R^n} \frac{|u(x)-u(y)|^{p-2} (u(x)-u(y)) (\varphi(x)-\varphi(y))}{|x-y|^{n+sp}}\, dx dy = \int_{\R^n} f \varphi.
\end{equation}
For simplicity, assume $x_0=0$. Denote the annulus $A = B_{2r} \setminus B_r$, $\eta \in C_c^\infty(B_{2r},[0,1])$, $\eta \equiv 1$ in $B_r$, $|\nabla \eta| \aleq r^{-1}$. Set
\[
 \varphi(x) := \eta(x) (u(x)-(u)_{A}).
\]
Using the product rule
\[
 \varphi(x) - \varphi(y)  = \eta(x) (u(x)-u(y)) + (\eta(x)-\eta(y)) (u(y)-(u)_A),
\]
we have
\[
\begin{split}
 \int_{B_r} \int_{\R^n}& \frac{|u(x)-u(y)|^{p}}{|x-y|^{n+sp}}\, dx dy\\
 \leq &\int_{\R^n} \int_{\R^n} \eta(y) \frac{|u(x)-u(y)|^{p}}{|x-y|^{n+sp}}\, dx dy\\
 =&\int_{\R^n} \int_{\R^n} \frac{|u(x)-u(y)|^{p-2} (u(x)-u(y)) \eta(y) (u(x)-u(y))}{|x-y|^{n+sp}}\, dx dy\\
 \overset{\eqref{eq:pde}}{\leq} &\int_{\R^n} \int_{\R^n}  \frac{|u(x)-u(y)|^{p-1} |u(x)-(u)_A| |\eta(x)-\eta(y)|}{|x-y|^{n+sp}}\, dx dy\\
 &+\int_{\R^n} |f(x)| \eta(x) |u(x)-(u)_A|
 \end{split}
\]
\[
\begin{split}
 \;\;\;\;\;\;\;\;\;\leq &\int_{B_{4r} \setminus B_r} \int_{B_{4r}}  \frac{|u(x)-u(y)|^{p-1} |u(x)-(u)_A| |\eta(x)-\eta(y)|}{|x-y|^{n+sp}}\, dx dy\\
 &+\int_{B_{4r}} \int_{B_{4r} \setminus B_{r}}  \frac{|u(x)-u(y)|^{p-1} |u(x)-(u)_A| |\eta(x)-\eta(y)|}{|x-y|^{n+sp}}\, dx dy \\
 &+\int_{\R^n \setminus B_{4r}} \int_{B_{4r}}  \frac{|u(x)-u(y)|^{p-1} |u(x)-(u)_A| }{|x-y|^{n+sp}}\, dx dy\\
 &+\int_{B_{4r}} \int_{\R^n \setminus B_{4r}}  \frac{|u(x)-u(y)|^{p-1} |u(x)-(u)_A| |\eta(y)|}{|x-y|^{n+sp}}\, dx dy\\
 &+\|f\|_{L^{\frac{p}{p-1}}(\Omega,\R^N)} \brac{\int_{B_{2r}} |u(x)-(u)_A|^p}^{\frac{1}{p}}
  \end{split}
\]
\[
\begin{split}
  \leq &\eps \int_{B_{4r}} \int_{\R^n} \frac{|u(x)-u(y)|^p}{|x-y|^{n+sp}}\, dx\, dy  \\
 & +C(\eps) \int_{B_{4r} \setminus B_{r}} \int_{B_{4r}}  \frac{ |u(x)-(u)_A|^p |\eta(x)-\eta(y)|^p}{|x-y|^{n+sp}}\, dx dy\\
 &+C(\eps) \int_{B_{4r}} \int_{B_{4r} \setminus B_{r}}  \frac{|u(x)-(u)_A|^p |\eta(x)-\eta(y)|^p}{|x-y|^{n+sp}}\, dx dy \\
 &+C(\eps) \int_{\R^n \setminus B_{4r}} (1+|y|)^{-n-sp} dy \int_{B_{4r}} |u(x)-(u)_A|^p dx \\
 &+C(\eps) r^n \int_{\R^n \setminus B_{4r}} (1+|x|)^{-n-sp} |u(x)-(u)_A|^p dx \\
 &+\|f\|_{L^{\frac{p}{p-1}}(\Omega,\R^N)} \brac{\int_{B_{2r}} |u(x)-(u)_A|^p}^{\frac{1}{p}}
 \end{split}
\]
\[
\begin{split}
 \int_{B_r} \int_{\R^n}& \frac{|u(x)-u(y)|^{p}}{|x-y|^{n+sp}}\, dx dy\\\aleq &\eps \int_{B_{4r}} \int_{\R^n} \frac{|u(x)-u(y)|^p}{|x-y|^{n+sp}}\, dx\, dy  \\
 &+C(\eps)  r^{-sp} \int_{B_{4r}} |u(x)-(u)_A|^p dx \\
 &+C(\eps) r^n \int_{\R^n \setminus B_{4r}} (1+|x|)^{-n-sp} |u(x)-(u)_A|^p dx \\
 &+r^s \|f\|_{L^{\frac{p}{p-1}}(\Omega,\R^N)} [u]_{W^{s,p}(\Omega,\R^N)}.
 \end{split}
\]

Observe that
\[
 C(\eps)  r^{-sp} \int_{B_{4r}} |u(x)-(u)_A|^p dx  \aleq \int_{B_{2r} \setminus B_{r}} \int_{\R^n} \frac{|u(x)-u(y)|^p}{|x-y|^{n+sp}} dx dy
\]
and
\[
\begin{split}
 r^n \int_{\R^n \setminus B_{4r}} &(1+|x|)^{-n-sp} |u(x)-(u)_A|^p dx \\
 \aleq&\int_{\R^n \setminus B_{4r}} \int_{A} \frac{|u(x)-u(y)|^p}{1+|x|^{n+sp}} dx dy\\
 \aleq&\int_{\R^n \setminus B_{4r}} \int_{B_{2r} \setminus B_{r}} \frac{|u(x)-u(y)|^p}{|x-y|^{n+sp}} dx dy\\
 =&\int_{B_{2r} \setminus B_{r}} \int_{\R^n} \frac{|u(x)-u(y)|^p}{|x-y|^{n+sp}} dx dy.
 \end{split}
\]

Consequently, we have
\[
\begin{split}
 \int_{B_{r}} \int_{\R^n} \frac{|u(x)-u(y)|^{p}}{|x-y|^{n+sp}}\, dx dy \leq& \eps \int_{B_{4r}} \int_{\R^n} \frac{|u(x)-u(y)|^{p}}{|x-y|^{n+sp}}\, dx dy\\
 &+ C(\eps)\int_{B_{2r} \setminus B_{r}} \int_{\R^n} \frac{|u(x)-u(y)|^{p}}{|x-y|^{n+sp}}\, dx dy\\
 &+C\, r^s \|f\|_{L^{\frac{p}{p-1}}(B_{2r},\R^N)} [u]_{W^{s,p}(B_{2r},\R^N)}.
 \end{split}
\]
Using the hole-filling argument we have found that for all $B_{10r}\subset \Omega$ we have
\[
\begin{split}
 \int_{B_r} &\int_{\R^n} \frac{|u(x)-u(y)|^{p}}{|x-y|^{n+sp}}\, dx dy \\
 \leq &\frac{\eps+C(\eps)} {1+C(\eps)} \int_{B_{4r}} \int_{\R^n} \frac{|u(x)-u(y)|^{p}}{|x-y|^{n+sp}}\, dx dy  + Cr^s \|f\|_{L^{\frac{p}{p-1}}(B_{2r},\R^N)} [u]_{W^{s,p}(B_{2r},\R^N)}.
 \end{split}
 \]
It is important to note that the constant $C$ is essentially independent of $s$ and $p$ in the sense that for any $\delta_0 > 0$, $\delta \in (0,\delta_0)$ and $s \in (s_0-\delta,s_0+\delta)$ $p \in (p_0-\delta,p_0+\delta)$ we can make the constant dependent only on $\delta_0$ and $s_0,p_0$.

Fix a ball $B_{4R}(x_0) \subset \Omega$. The above implies that for some $\tau \in (0,1)$ (depending only on $s_0,p_0,\delta_0$) and any (not necessarily concentric) ball $B_r(y) \subset B_R(x_0)$ we have
\[
\begin{split}
 \int_{B_r(y)} &\int_{\R^n} \frac{|u(x)-u(y)|^{p}}{|x-y|^{n+sp}}\, dx dy \\
 \leq &\tau \int_{B_{4R}(x_0)} \int_{\R^n} \frac{|u(x)-u(y)|^{p}}{|x-y|^{n+sp}}\, dx dy  + Cr^s \|f\|_{L^{\frac{p}{p-1}}(B_{2R}(x_0),\R^N)} [u]_{W^{s,p}(B_{2R}(x_0),\R^N)}.
 \end{split}
 \]
By iteration, there is a constant $\delta \in (0,s)$ only depending on $C$ (and thus $\delta_0$ and $s_0,p_0$) we find
\[
\begin{split}
    &\int_{B_r(y)} \int_{\R^n} \frac{|u(x)-u(y)|^{p}}{|x-y|^{n+sp}}\, dx dy \\  &\aleq \brac{\frac{r}{R}}^\delta\, \int_{B_{4R}(x_0)} \int_{\R^n} \frac{|u(x)-u(y)|^{p}}{|x-y|^{n+sp}}\, dx dy  + \brac{\frac{r}{R}}^\delta \|f\|_{L^{\frac{p}{p-1}}(B_{2R}(x_0),\R^N)} [u]_{W^{s,p}(B_{2R}(x_0),\R^N)}.
\end{split}
\]
Thus,
\[
\begin{split}
  \sup_{B_r(y) \subset B_R(x_0)}& r^{-sp-\delta} \int_{B_r(y)} |u(x)-(u)_{B_r(y)}|^p  \\
  \aleq &R^{-\delta} \brac{\int_{B_{4R}(x_0)} \int_{\R^n} \frac{|u(x)-u(y)|^{p}}{|x-y|^{n+sp}}\, dx dy  + \|f\|_{L^{\frac{p}{p-1}}(B_{2R}(x_0),\R^N)} [u]_{W^{s,p}(B_{2R}(x_0),\R^N)}}.
  \end{split}
\]
If $n<sp+\delta\leq n+p $, then by the theory of Campanato spaces (\Cref{th:campanato}) we find
\[
\begin{split}
    [u]&_{C^{0,\alpha}(B_{R}(x_0),\R^N)} \\&\aleq R^{-\frac{\delta}{p}} \brac{\int_{B_{4R}(x_0)} \int_{\R^n} \frac{|u(x)-u(y)|^{p}}{|x-y|^{n+sp}}\, dx dy  + \|f\|_{L^{\frac{p}{p-1}}(B_{2R}(x_0),\R^N)} [u]_{W^{s,p}(B_{2R}(x_0),\R^N)}}^{\frac{1}{p}}.
\end{split}
\]
where $\alpha=(sp+\delta-n)/p$. This concludes the proof of \Cref{th:alternate for BLS}.

\bibliographystyle{abbrv}
\bibliography{bib}

\begin{thebibliography}{10}

\bibitem{BDNS24}
L.~{Behn}, L.~{Diening}, S.~{Nowak}, and T.~{Scharle}.
\newblock {The De Giorgi method for local and nonlocal systems}.
\newblock {\em arXiv e-prints}, page arXiv:2404.04063, Apr. 2024.

\bibitem{BethuelStationary}
F.~Bethuel.
\newblock On the singular set of stationary harmonic maps.
\newblock {\em Manuscr. Math.}, 78(4):417--443, 1993.

\bibitem{schaftgen_2014}
P.~Bousquet, A.~C. Ponce, and J.~Van~Schaftingen.
\newblock Strong approximation of fractional sobolev maps.
\newblock {\em Journal of Fixed Point Theory and Applications}, 15(1):133–153, Mar. 2014.

\bibitem{BrascoLingrenSchikorra}
L.~Brasco, E.~Lindgren, and A.~Schikorra.
\newblock Higher {H}\"{o}lder regularity for the fractional {$p$}-{L}aplacian in the superquadratic case.
\newblock {\em Adv. Math.}, 338:782--846, 2018.

\bibitem{BCL}
H.~Brezis, J.-M. Coron, and E.~H. Lieb.
\newblock Harmonic maps with defects.
\newblock {\em Comm. Math. Phys.}, 107(4):649--705, 1986.

\bibitem{DCKP16}
A.~Di~Castro, T.~Kuusi, and G.~Palatucci.
\newblock Local behavior of fractional {{\(p\)}}-minimizers.
\newblock {\em Ann. Inst. Henri Poincar{\'e}, Anal. Non Lin{\'e}aire}, 33(5):1279--1299, 2016.

\bibitem{Hitchhikers}
E.~{Di Nezza}, G.~Palatucci, and E.~Valdinoci.
\newblock Hitchhiker's guide to the fractional sobolev spaces.
\newblock {\em Bulletin des Sciences Mathématiques}, 136(5):521--573, 2012.

\bibitem{EvansSphere}
L.~C. Evans.
\newblock Partial regularity for stationary harmonic maps into spheres.
\newblock {\em Arch. Ration. Mech. Anal.}, 116(2):101--113, 1991.

\bibitem{gehring}
S.~Fennicae and J.~Kinnunen.
\newblock Higher integrability with weights.
\newblock 07 1998.

\bibitem{Gastel}
A.~Gastel.
\newblock Partial regularity of polyharmonic maps to targets of sufficiently simple topology.
\newblock {\em Z. Anal. Anwend.}, 35(4):397--410, 2016.

\bibitem{Giaquinta}
M.~Giaquinta.
\newblock {\em Multiple integrals in the calculus of variations and nonlinear elliptic systems}, volume 105 of {\em Annals of Mathematics Studies}.
\newblock Princeton University Press, Princeton, NJ, 1983.

\bibitem{giaquinta-martinazzi}
M.~Giaquinta and L.~Martinazzi.
\newblock {\em An introduction to the regularity theory for elliptic systems, harmonic maps and minimal graphs}, volume~11 of {\em Appunti. Scuola Normale Superiore di Pisa (Nuova Serie) [Lecture Notes. Scuola Normale Superiore di Pisa (New Series)]}.
\newblock Edizioni della Normale, Pisa, second edition, 2012.

\bibitem{HKL}
R.~Hardt, D.~Kinderlehrer, and F.-H. Lin.
\newblock Existence and partial regularity of static liquid crystal configurations.
\newblock {\em Comm. Math. Phys.}, 105(4):547--570, 1986.

\bibitem{HardtLin87}
R.~Hardt and F.~Lin.
\newblock Mappings minimizing the {{\(L^ p\)}} norm of the gradient.
\newblock {\em Commun. Pure Appl. Math.}, 40(5):555--588, 1987.

\bibitem{HeleinBook}
F.~H{\'e}lein.
\newblock {\em Harmonic maps, conservation laws and moving frames. {Transl}. from the {French}.}, volume 150 of {\em Camb. Tracts Math.}
\newblock Cambridge: Cambridge University Press, 2nd ed. edition, 2002.

\bibitem{Hopper}
C.~P. Hopper.
\newblock Partial regularity for holonomic minimisers of quasiconvex functionals.
\newblock {\em Arch. Ration. Mech. Anal.}, 222(1):91--141, 2016.

\bibitem{giovanni}
G.~Leoni.
\newblock {\em A first course in fractional {S}obolev spaces}, volume 229 of {\em Graduate Studies in Mathematics}.
\newblock American Mathematical Society, Providence, RI, [2023] \copyright 2023.

\bibitem{MMS}
K.~Mazowiecka, M.~Mi\'{s}kiewicz, and A.~Schikorra.
\newblock On the size of the singular set of minimizing harmonic maps.
\newblock {\em Mem. Amer. Math. Soc.}, 302(1519):v+82, 2024.

\bibitem{kasha-schikorra-functional}
K.~Mazowiecka and A.~Schikorra.
\newblock Fractional div-curl quantities and applications to nonlocal geometric equations.
\newblock {\em J. Funct. Anal.}, 275(1):1--44, 2018.

\bibitem{MPS21}
V.~Millot, M.~Pegon, and A.~Schikorra.
\newblock Partial regularity for fractional harmonic maps into spheres.
\newblock {\em Arch. Ration. Mech. Anal.}, 242(2):747--825, 2021.

\bibitem{MS15}
V.~Millot and Y.~Sire.
\newblock On a fractional {Ginzburg}-{Landau} equation and 1/2-harmonic maps into spheres.
\newblock {\em Arch. Ration. Mech. Anal.}, 215(1):125--210, 2015.

\bibitem{RivEvDisc}
T.~Rivi{\`e}re.
\newblock Everywhere discontinuous harmonic maps into spheres.
\newblock {\em Acta Math.}, 175(2):197--226, 1995.

\bibitem{RiviereConsLaws}
T.~Rivi{\`e}re.
\newblock Conservation laws for conformally invariant variational problems.
\newblock {\em Invent. Math.}, 168(1):1--22, 2007.

\bibitem{SU82}
R.~Schoen and K.~Uhlenbeck.
\newblock A regularity theory for harmonic maps.
\newblock {\em J. Differ. Geom.}, 17:307--335, 1982.

\bibitem{Ziemer}
W.~P. Ziemer.
\newblock {\em Weakly differentiable functions}, volume 120 of {\em Graduate Texts in Mathematics}.
\newblock Springer-Verlag, New York, 1989.
\newblock Sobolev spaces and functions of bounded variation.

\end{thebibliography}

\end{document}